\date{}
\newtheorem{cor}[theorem]{Corollary}
\newtheorem{remark}[theorem]{Remark}
\newcommand{\dual}[2]{\langle#1\hspace*{.5mm},#2\rangle}
\newcommand{\vdual}[2]{(#1\hspace*{.5mm},#2)}
\newcommand{\abs}[1]{\vert #1 \vert}
\newcommand{\norm}[3][]{#1\|#2#1\|_{#3}}
\newcommand{\enorm}[2][]{#1|\hspace*{-.3mm}#1|\hspace*{-.3mm}#1|#2#1|\hspace*{-.3mm}#1|\hspace*{-.3mm}#1|}
\newcommand{\wat}{\widehat}
\def\pwnabla{\nabla_\TT}
\def\ccurl{{\bf curl\,}}
\def\div{{\rm div\,}}
\def\pwdiv{ {\rm div}_{\TT}\,}
\newcommand{\jump}[1]{[#1]}
\newcommand{\hp}{\mathrm{hp}}
\def\pwDelta{\Delta_\TT}
\newcommand{\opt}{{\rm opt}}
\def\eps{\varepsilon}
\newcommand{\R}{\ensuremath{\mathbb{R}}}
\newcommand{\HH}{\ensuremath{\mathbf{H}}}
\newcommand{\cH}{\ensuremath{\mathcal{H}}}
\newcommand{\LL}{\ensuremath{\mathbf{L}}}
\newcommand{\vv}{\ensuremath{\mathbf{v}}}
\newcommand{\TT}{\ensuremath{\mathcal{T}}}
\newcommand{\cS}{\ensuremath{\mathcal{S}}}
\newcommand{\el}{\ensuremath{T}}
\newcommand{\OO}{\ensuremath{\mathcal{O}}}
\newcommand{\GG}{\ensuremath{\mathbf{G}}}
\newcommand{\zz}{\ensuremath{\mathbf{z}}}
\newcommand{\ww}{\ensuremath{\mathbf{w}}}
\newcommand{\U}{{U}}
\newcommand{\V}{{V}}
\newcommand{\ssigma}{{\boldsymbol\sigma}}
\newcommand{\ttau}{{\boldsymbol\tau}}
\newcommand{\llambda}{{\boldsymbol\lambda}}
\newcommand{\qq}{{\boldsymbol{q}}}
\newcommand{\uu}{\boldsymbol{u}}
\newcommand{\n}{\boldsymbol{n}}
\DeclareMathOperator*{\dist}{dist}
\newcommand{\utrace}{{\wat u}}
\newcommand{\ua}{{\wat u^a}}
\newcommand{\ub}{{\wat u^b}}
\newcommand{\ubext}{{u^b}}
\newcommand{\sigmatrace}{{\wat\sigma}}
\newcommand{\sigmaa}{{\wat\sigma^a}}
\newcommand{\sigmab}{{\wat\sigma^b}}
\newcommand{\MM}{\ensuremath{\mathcal{M}}}
\newcommand{\cc}{\ensuremath{\mathbf{c}}}
\title{A robust DPG method for singularly perturbed reaction-diffusion problems
\thanks{Supported by CONICYT through FONDECYT projects 1150056, 3140614,
        Anillo ACT1118 (ANANUM), and by NSF under grant DMS-1318916.}}
\author{
Norbert Heuer\thanks{
Facultad de Matem\'aticas, Pontificia Universidad Cat\'olica de Chile,
Avenida Vicu\~na Mackenna 4860, Macul, Santiago, Chile,
email: {\tt nheuer@mat.uc.cl}}
\and
Michael~Karkulik\thanks{
Departamento de Matem\'atica, Universidad T\'ecnica Federico Santa Mar\'\i a,
Avenida Espa\~na 1680, Valpara\'\i so, Chile,
email: {\tt michael.karkulik@usm.cl}}
}
\begin{document}
\maketitle
\begin{abstract}
We present and analyze a discontinuous Petrov-Galerkin method with optimal test functions
for a reaction-dominated diffusion problem in two and three space dimensions.
We start with an ultra-weak formulation that comprises parameters $\alpha$, $\beta$
to allow for general $\eps$-dependent weightings of three field variables
($\eps$ being the small diffusion parameter).
Specific values of $\alpha$ and $\beta$ imply robustness of the method, that is,
a quasi-optimal error estimate with a constant that is independent of $\eps$.
Moreover, these values lead to a norm for the field variables that is known to be balanced
in $\eps$ for model problems with typical boundary layers.
Several numerical examples underline our theoretical estimates and reveal
stability of approximations even for very small $\eps$.

\bigskip
\noindent
{\bf Key words:} reaction-dominated diffusion, singularly perturbed problem, boundary layers,
discontinuous Petrov-Galerkin method

\noindent
{\bf AMS subject classification:} 65N30 (primary), 35B25, 35J25 (secondary)
\end{abstract}
\section{Introduction}

In this paper we analyze the discontinuous Petrov-Galerkin (DPG) method with optimal test functions
for the following singularly perturbed problem of reaction-dominated diffusion,
\begin{subequations}\label{model}
\begin{align}
  -\eps\Delta u + u &= f \quad\text{ in } \Omega, \label{model:1}\\
  u &= 0 \quad\text{ on } \Gamma.
\end{align}
\end{subequations}
Here, $\Omega\subset\R^d$ ($d=2,3$) is a bounded, simply connected Lipschitz polygonal/polyhedral domain
with boundary $\Gamma:=\partial\Omega$. Throughout, we assume that $0<\eps\leq 1$ and
$f$ will be taken from $L_2(\Omega)$.
Such problems appear in applications, e.g., when solving nonlinear reaction-diffusion problems
by the Newton method and in implicit time-discretizations with small time steps of parabolic
reaction-diffusion problems.

The objective of this paper is to push DPG techniques to the limit: For this academic model problem,
\emph{how can we design a DPG method that robustly controls the solution in a norm as strong as possible?}
We will answer this question without using a particular knowledge of the solution
(like the existence of boundary layers) and without using specific meshes.
In this way we hope that our study gives new insight into DPG techniques that will be useful for practical
problems beyond this academic model case.
To be clear, we will not be able to beat approximation properties of a specifically designed method
(square domain and finite elements or finite differences on Shishkin meshes). In contrast, our method
gives robust control in a stronger norm and in general situations.

The numerical approximation of \eqref{model} is notoriously difficult due to the presence of
boundary layers in the solution and due to the deteriorating $H^1(\Omega)$-ellipticity
(of its Dirichlet bilinear form) when $\eps\to 0$.
For an overview of methods for singularly perturbed problems we refer to \cite{RoosST_08_RNM},
and to \cite{Linss_10_LAM} for specific constructions of layer-adapted meshes.
Whereas there is plenty of literature on convection-dominated diffusion problems, the treatment of
reaction-dominated problems is more scarce. Most authors consider very specific domains (like intervals
or squares), specific meshes (e.g., Shishkin meshes) and/or provide an error analysis in
$L_2$- or standard energy norms, cf., e.g., \cite{ApelL_98_AMR,LiN_98_UCF,Lin_08_DDL,XenophontosF_03_UAS}.

For small $\eps$, typical solutions $u_\eps$ of~\eqref{model} contain boundary layers of the type
$\exp(-\alpha\dist(x,\partial\Omega)/\sqrt{\eps})$, cf.~\cite[Thm.~2.3.4]{Melenk_02_hpF}.
Considering the \emph{standard} energy norm (induced by a standard weak form of \eqref{model}),
$\norm{\cdot}{\eps}^2=\norm{\cdot}{L_2(\Omega)}^2 + \eps\norm{\nabla \cdot}{L_2(\Omega)}^2$,
it turns out that both contributions to the norm are not equilibrated:
$\norm{u_\eps}{L_2(\Omega)}^2 = O(1)$ but $\eps\norm{\nabla u_\eps}{L_2(\Omega)}^2 = o(1)$
when $\eps\to 0$. Hence, for small $\eps$, the standard energy norm controls essentially the $L_2(\Omega)$ norm and
not the gradient.

Relatively recently, Lin and Stynes \cite{LinS_12_BFE} proposed to use a balanced norm
(which is stronger than the standard energy norm) where the concentration $u$ (the original unknown),
the flux $\ssigma\sim\nabla u$, and $\rho\sim\Delta u$ are weighted by appropriate powers of the diffusion
coefficient $\eps$ (assuming the reaction coefficient to be fixed or bounded) so that their $L_2$-norms
are of the same magnitude when $\eps\to 0$ (which is not the case for the standard energy norm).
They give a variational formulation and discrete method that
allows for a robust, quasi-optimal error estimate in this balanced norm.
Later, Roos and Schopf \cite{RoosS_15_CSB} provided an analysis for a standard Galerkin method
where they proved a quasi-optimal error estimate in the balanced norm, however, restricted to a square
domain with precise characterization of boundary layers and using Shishkin meshes.
In \cite{MelenkX_16_REC}, Melenk and Xenophontos analyzed the $hp$-version of the finite element method
with appropriate mesh refinement at boundary layers. They also consider the balanced norm and prove robust
exponential convergence, again making use of the specific knowledge of boundary layers and
restricted to one and two space dimensions.

In this paper we systematically develop a DPG scheme with optimal test functions for the approximation
of problem \eqref{model}. Main objective is robustness of the scheme.
This means that an $L_2$-type norm of field variables is controlled by the so-called \emph{energy norm} of
the DPG formulation with a constant that is independent of $\eps$.
The energy norm of the DPG error can be calculated (when using truly optimal test functions)
and thus gives robust a posteriori error control of the field variables.
The DPG method is usually based
on an (ultra-weak) variational formulation of a first-order system and uses a discontinuous Galerkin (DG)
setting. Such a DG approach allows for the efficient generation of optimal test functions
that guarantee discrete stability.
In this combination, the method has been developed by Demkowicz and Gopalakrishnan, see,
e.g., \cite{DemkowiczG_11_ADM,DemkowiczG_11_CDP}.
Recently, DPG technology has been extended by several authors, and in slightly different forms,
to convection-dominated diffusion problems 
\cite{BroersenS_14_RPG,BroersenS_15_PGD,ChanHBTD_14_RDM,DemkowiczH_13_RDM}.
Here, the central idea is to find a DPG setting
(bilinear form, test and ansatz Hilbert spaces with corresponding norms) so that the employment
of optimal test functions (whose definition is automatically given by the setting) produces discrete
inf-sup numbers that are bounded from below by a constant that is independent both of the discrete
ansatz space and perturbation parameters.

Despite of its simpler appearance, reaction-dominated diffusion without convection is harder to
approximate robustly than convection-dominated diffusion. In the latter case, the convective term
allows to control part of the flux ($\ssigma\sim\nabla u$) and this is essential to control
the concentration ($u$ in $L_2$), see \cite{DemkowiczH_13_RDM} for details.
In \cite{DemkowiczH_RDP}, Demkowicz and Harari present attempts to establish a robust DPG setting
for reaction-dominated diffusion. However, similar strategies as in \cite{DemkowiczH_13_RDM}
(but without ultra-weak formulation) lead to conflicts to either weight the ansatz space or the test space
(their norms) with higher powers of $\eps$. Playing with different $\eps$-weightings leads to simple
re-scalings of norms and does not result in a robust setting where optimal test functions can be calculated
with acceptable cost. In this paper, we reconsider the approach to design a robust DPG scheme for
reaction-dominated problems.

Considering reaction-diffusion problems with small diffusion, a natural way to control (parts of) the flux
in a variational setting is to test with second-order derivatives (e.g., the Laplacian) of test functions.
This strategy has been pursued in \cite{LinS_12_BFE} and is also the basis of our variational setting.
When trying to use a ``standard'' ultra-weak formulation where one tests one equation with test functions
as well as with their piecewise Laplacian, it turns out that such a formulation is not well posed: the existence
of a solution is not guaranteed.
To circumvent this problem, we introduce an additional unknown.
Not surprisingly, having tested with the Laplacian of test functions, an appropriate unknown is the Laplacian of the original
unknown. Our DPG approach therefore leads to three field variables: the concentration $u$, the flux
$\ssigma\sim\nabla u$, and $\rho\sim\Delta u$. Without further analysis it is unclear how the weighting
(with respect to the diffusion parameter $\eps$) of the three variables and involved norms should be.
We will therefore introduce two parameters $\alpha,\beta\ge 0$ that serve as powers of $\eps$ in the weighting
of Sobolev norms and several estimates. We initially start with unspecified non-negative parameters.
The quest for robustness will later fix their values at $\beta=2\alpha=1/2$,
cf.~Remark~\ref{rem_alpha_beta} in \S\ref{sec_stab} below. Specifically, our method controls the field variables
robustly in the norms $\|u\|$, $\|\ssigma\|$ with $\ssigma=\eps^{1/4}\nabla u$,
and $\|\rho\|$ with $\rho=\eps^{3/4}\Delta u$
(though we actually define $\rho=\div\ssigma$ and control $\eps^{1/2}\|\rho\|$).
This corresponds to the very norm proposed by Lin and Stynes in \cite{LinS_12_BFE} and which they
show to be balanced when $\eps\to 0$. In particular, on a unit square and with Shishkin
meshes, they prove a uniform (in $\eps$) best approximation error estimate in this balanced norm.

To resume, the requirement of robustness of our DPG ansatz leads to a setting in a norm 
whose components for the field variables are known to be balanced for typical boundary layers.
This analysis applies to two and three dimensions and is independent of the specific domain.
In particular, we do not need any knowledge of possible boundary layers or the solution itself and
we do not use specific meshes.
In this general setting, we are able to prove a robust error estimate for the error in the balanced norm,
bounded quasi-uniformly by the best approximation in the energy norm given by the variational formulation
(not to be confused with the standard energy norm).

There is a small catch we have not resolved so far. Bounding the energy norm by the balanced norm,
e.g., to establish convergence orders, this leads to
a sub-optimality in one of the trace variables whose approximation error is multiplied by $\eps^{-1/4}$.
We do not study best approximation convergence orders in the balanced norm for typical
boundary layers. The best approximation of field variables (in two dimensions on a square) has been
analyzed in \cite{LinS_12_BFE}, and an analysis of the trace variables is an open problem.
For a detailed discussion of the sub-optimality we refer to Remark~\ref{rem_b} in \S\ref{sec_b}.

In practice, optimal test functions have to be approximated. For fixed polynomial degrees and
non-perturbed problems, this \emph{practical} DPG method has been analyzed
by Gopalakrishnan and Qiu \cite{GopalakrishnanQ_14_APD},
with explicit results for the Poisson equation and linear elasticity.
Our analysis considers the \emph{ideal} DPG method with optimal test functions.
We do not analyze the influence of approximating optimal test functions. This is an open problem.
However, in practice, the ``crime'' of approximating optimal test functions of reaction-dominated
diffusion is self correcting through adaptivity. Let us underline this statement with the following
heuristical argument. Error estimation (that is error calculation when exactly
resolving optimal test functions) is an integral part of the DPG method \cite{DemkowiczGN_12_CDP}.
Using this estimation to steer adaptivity, boundary layers of the unknown solution are resolved and
in this way, optimal test functions are also well approximated.
Indeed, the optimal test functions needed to ensure the discrete inf-sup property
(they solve adjoint problems) and the solution of the original problem have boundary layers at
the same locations. This is due to the selfadjointness of the problem.
In comparison, solutions to convection-diffusion problems typically
have layers at the outflow boundary whereas their adjoint problems have boundary layers at the inflow
(of the original problem). Then, adaptivity aiming at the original problem does not produce meshes that
approximate well optimal test functions.
In contrast, adaptivity for reaction-dominated diffusion automatically aims at robustness
(by approximating the test functions increasingly well) and good approximation properties
(boundary layers of unknown functions are detected) at the same time.
Our numerical experiments confirm this interpretation
in the sense that robustness is always obtained and efficiency is achieved when adaptivity
eventually resolves boundary layers.
We also note that our numerical results show robustness of the numerical solutions for extremely
small $\eps$, that is, approximations do not oscillate at boundary layers.

Let us conclude this section with collecting the main results of this paper and remaining open problems.
\begin{itemize}
\item Our search for a variational formulation whose energy norm can robustly control field variables
  ($u$ and $\nabla u$) in $L_2$ led us to a three-field scheme (also containing $\Delta u$ as an unknown).
  We stress that it might be possible to obtain a two-field scheme (having unknowns $u$ and $\nabla u$)
  by imposing more regularity on the test space.
\item Control of the field variables by the (DPG) energy norm is proved by an abstract stability analysis
      (of the adjoint problem).
      The condition of \emph{robust} control leads to the balanced norm of the field variables.
      This is a general outcome in two and three space dimensions without assuming the presence of boundary
      layers or the use of specific meshes.
\item By design of the DPG method with optimal test functions, the error in the energy norm can be calculated
      elementwise. We thus have a robust a posteriori error control of the field variables in the balanced
      norm.
\item In practice, optimal test functions have to be approximated. An analysis of the effect of this approximation
      for singularly perturbed problems is ongoing research.
\item We prove a best approximation property of our DPG scheme in a norm that is balanced in the field
      variables and contains scaled trace norms of skeleton variables. Approximation results
      for this very norm have yet to be produced.
\end{itemize}

The remainder of the paper is as follows. In Section~\ref{sec_DPG} we develop and analyze our DPG method.
Sobolev spaces are introduced in Subsection~\ref{sec_Sobolev}, and in Subsection~\ref{sec_VF} we present
an ultra-weak formulation, the discrete scheme and the main result (Theorem~\ref{thm}). This theorem
provides a robust estimate for the balanced norm by the energy norm. We also give an upper bound of
the energy norm in terms of the balanced norm. In the subsequent subsections we analyze the bilinear form
of the variational formulation and show stability of solutions to the adjoint problem. This stability
implies robustness of the error estimate from Theorem~\ref{thm}. Finally, in Section~\ref{sec_num}, we
present several numerical results.

Throughout the paper, $a\lesssim b$ means that $a\le cb$ with a generic constant $c>0$ that is independent of
involved parameters, functions and the underlying mesh. Similarly, we use the notation $a\simeq b$.

\section{Presentation and analysis of the DPG method} \label{sec_DPG}
\subsection{Introduction to the DPG method}\label{sec_intro_dpg}
We briefly recall the framework and results of the DPG method with optimal test
functions, cf.~\cite{DemkowiczG_11_ADM}. Given a Banach space
$U$, a Hilbert space $V$, and a bilinear form $b:U\times V\rightarrow \R$, we
consider the following three conditions:
\begin{subequations}\label{eq:bb}
  \begin{align}\label{eq:bb:1}
    b(\uu,\vv)= 0 \text{ for all } \vv\in V \implies \uu = 0;\\
    \label{eq:bb:2}
    C_{\rm infsup}\norm{\vv}{V} \leq \sup_{\uu\in U}
    \frac{b(\uu,\vv)}{\norm{\uu}{U}}\quad \text{ for all } \vv\in V;\\
    \label{eq:bb:3}
    b(\uu,\vv)\leq C_{\rm b}\norm{\uu}{U}\norm{\vv}{V}
    \qquad\text{ for all } \uu\in U, \vv\in V.
  \end{align}
\end{subequations}
Here, $C_{\rm infsup}$ and $C_{\rm b}$ are positive constants.
Define the so-called trial-to-test operator $\Theta:U\rightarrow V$ by
\begin{align}\label{eq:ttot}
  \dual{\Theta \uu}{\vv}_V = b(\uu,\vv)\quad \text{ for all }\vv\in V,
\end{align}
or, equivalently, by
\begin{align}
  \Theta = J^{-1}B,
  \label{eq:ttot2}
\end{align}
where $B:U\rightarrow V'$ is the operator corresponding to the bilinear form $b$ and $J:V\rightarrow V'$
is the Riesz operator.
The following result is central to the DPG method and is, in the end, consequence of the Babu\v{s}ka-Brezzi
theory and related references given in the introduction.
\begin{theorem}\label{thm:dpg}
  Suppose that~\eqref{eq:bb:1}--\eqref{eq:bb:3} hold
  for a Banach space $U$, a Hilbert space $V$,
  and a bilinear form $b:U\times V\rightarrow \R$.
  Then, an equivalent norm on $U$ is given by
  \begin{align}\label{norm_E}
    \norm{\uu}{E} := \sup_{\vv\in V}\frac{b(\uu,\vv)}{\norm{\vv}{V}},
    \quad\text{ with }\quad C_{\rm infsup}\norm{\uu}{U} \leq \norm{\uu}{E}
    \le C_{\rm b}\norm{\uu}{U}\quad \forall \uu\in U.
  \end{align}
  Furthermore, for any $L\in V'$, the problem
  \begin{align}\label{thm:dpg:eq:weak}
    \text{ find } \uu\in U \text{ such that }\quad
    b(\uu,\vv) = L(\vv)\quad \text{ for all }\vv\in V
  \end{align}
  has a unique solution, and
  \begin{align}\label{thm:dpg:eq:stab}
    \norm{\uu}{E} = \norm{L}{V'}.
  \end{align}
  In addition, if $U_\hp\subset U$ is a finite-dimensional subspace, then the
  problem
  \begin{align}\label{thm:dpg:eq:discrete}
    \text{ find } \uu_\hp\in U_\hp \text{ such that }\quad
    b(\uu_\hp,\vv_\hp) = L(\vv_\hp)\quad \text{ for all }\vv_\hp\in \Theta(U_\hp)
  \end{align}
  has a unique solution, and
  \begin{align}\label{thm:dpg:eq:approx}
    \norm{\uu-\uu_\hp}{E} = \inf_{\uu_\hp'\in
    U_\hp}\norm{\uu-\uu_\hp'}{E}.
  \end{align}
\end{theorem}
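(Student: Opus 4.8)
The plan is to pass to operator form and then read off every assertion. Let $B\colon U\to V'$ be the operator with $\dual{B\uu}{\vv}=b(\uu,\vv)$ and let $J\colon V\to V'$ be the Riesz isometry, so that $\Theta=J^{-1}B$ and, for every $\uu\in U$,
\begin{align*}
  \norm{\uu}{E}=\sup_{\vv\in V}\frac{b(\uu,\vv)}{\norm{\vv}{V}}=\norm{B\uu}{V'}=\norm{\Theta\uu}{V}.
\end{align*}
First I would recall the classical Babu\v{s}ka--Ne\v{c}as--Brezzi argument (this is the ``consequence of the Babu\v{s}ka--Brezzi theory'' referred to above): \eqref{eq:bb:3} says that $B$ is bounded with $\norm{B}{U\to V'}\le C_{\rm b}$; \eqref{eq:bb:2}, after identifying the functional $\uu\mapsto b(\uu,\vv)$ with $B'\vv\in U'$ through the reflexivity of the Hilbert space $V$, says that $\norm{B'\vv}{U'}\ge C_{\rm infsup}\norm{\vv}{V}$, so $B'$ is injective with closed range; \eqref{eq:bb:1} says that $B$ is injective, i.e.\ $B'$ has dense range. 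Hence $B'$, and therefore $B$, is a bijection, and the closed-range/open-mapping theorems give $\norm{B^{-1}}{V'\to U}\le C_{\rm infsup}^{-1}$. The displayed identity then yields both inequalities in \eqref{norm_E} and shows that $\norm{\cdot}{E}$ is a norm; for $L\in V'$ the element $\uu:=B^{-1}L$ is the unique solution of \eqref{thm:dpg:eq:weak}, and $\norm{\uu}{E}=\norm{B\uu}{V'}=\norm{L}{V'}$ is \eqref{thm:dpg:eq:stab}.

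The second ingredient is that $\norm{\cdot}{E}$ is Hilbertian: since $V'$ is a Hilbert space, $\norm{\uu}{E}^2=(B\uu,B\uu)_{V'}$, so $(\uu,\ww)_E:=(B\uu,B\ww)_{V'}$ defines an inner product on $U$ (definite by \eqref{eq:bb:1}) inducing $\norm{\cdot}{E}$. Using $J^{-1}$ and the defining relation \eqref{eq:ttot} of $\Theta$, one obtains the key identity
\begin{align*}
  (\uu,\ww)_E=(\Theta\uu,\Theta\ww)_V=b(\uu,\Theta\ww)=b(\ww,\Theta\uu).
\end{align*}
In particular, for $\uu\in U_\hp$ the supremum defining $\norm{\uu}{E}$ is already attained over the test space $\Theta(U_\hp)$, namely at $\vv=\Theta\uu$, so the discrete inf--sup constant in the energy norm equals one.

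Finally, the discrete problem. Because $\Theta|_{U_\hp}$ is injective ($\Theta\uu=0\Rightarrow B\uu=0\Rightarrow\uu=0$ by \eqref{eq:bb:1}), a basis $\{\phi_i\}$ of $U_\hp$ gives a basis $\{\Theta\phi_i\}$ of $\Theta(U_\hp)$, and by the key identity the stiffness matrix of \eqref{thm:dpg:eq:discrete} is $\big(b(\phi_i,\Theta\phi_j)\big)_{ij}=\big((\phi_i,\phi_j)_E\big)_{ij}$, the Gram matrix of a basis of $U_\hp$ in the energy inner product; it is symmetric positive definite, so \eqref{thm:dpg:eq:discrete} has a unique solution $\uu_\hp$. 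Subtracting \eqref{thm:dpg:eq:discrete} from \eqref{thm:dpg:eq:weak} gives the Galerkin orthogonality $b(\uu-\uu_\hp,\vv_\hp)=0$ for all $\vv_\hp\in\Theta(U_\hp)$; feeding $\vv_\hp=\Theta\ww_\hp$ into the key identity turns this into $(\uu-\uu_\hp,\ww_\hp)_E=0$ for all $\ww_\hp\in U_\hp$, i.e.\ $\uu_\hp$ is the $(\cdot,\cdot)_E$-orthogonal projection of $\uu$ onto $U_\hp$, which is precisely \eqref{thm:dpg:eq:approx}.

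The only non-routine step is the first one — that \eqref{eq:bb:1}--\eqref{eq:bb:3} make $B$ an isomorphism with the stated inverse bound — which genuinely uses the closed-range theorem and the reflexivity of $V$; I would either carry this out or simply cite the Babu\v{s}ka--Brezzi theorem. In the discrete part the point worth flagging is that the best-approximation error comes out with constant exactly $1$ rather than merely bounded: this forces one to identify $\uu_\hp$ with the energy-orthogonal projection (equivalently, to use that the supremum over $\Theta(U_\hp)$ reproduces $\norm{\cdot}{E}$ on $U_\hp$), instead of arguing via a C\'ea-type triangle inequality.
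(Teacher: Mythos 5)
Your proof is correct. Note that the paper does not actually prove Theorem~\ref{thm:dpg}: it states it as a known consequence of the Babu\v{s}ka--Brezzi theory and defers to the standard DPG references (Demkowicz--Gopalakrishnan et al.). What you have written is precisely the standard argument from that literature, filled in completely: the closed-range/reflexivity reasoning that turns \eqref{eq:bb:1}--\eqref{eq:bb:3} into the bijectivity of $B$ with $\norm{B^{-1}}{V'\to U}\le C_{\rm infsup}^{-1}$, the identification $\norm{\uu}{E}=\norm{B\uu}{V'}=\norm{\Theta\uu}{V}$, and the observation that $(\uu,\ww)_E=b(\ww,\Theta\uu)$ makes the discrete system a Gram matrix and $\uu_\hp$ the $E$-orthogonal projection, which is exactly why \eqref{thm:dpg:eq:approx} holds with constant one rather than a C\'ea constant. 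No gaps.
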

Integral part of the computation of the numerical approximation $\uu_\hp$ in~\eqref{thm:dpg:eq:discrete}
is the generation of $\Theta(U_\hp)$ defined via~\eqref{eq:ttot}.
Unless this can be done analytically,~\eqref{eq:ttot} has to be discretized. This gives the so-called~\emph{practical DPG method},
cf.~\cite{GopalakrishnanQ_14_APD}.
For an efficient discretization, the space $V$ 
and its norm $\norm{\cdot}{V}$ and associated inner product $\dual{\cdot}{\cdot}_V$
have to be chosen in a broken form, i.e., local with respect to elements of some mesh $\TT$:
$V=\prod_{\el\in\TT} V(\el)$, and $\dual{\cdot}{\cdot}_V = \sum_{\el\in\TT}\dual{\cdot}{\cdot}_{V(\el)}$.
Then, a discretization of~\eqref{eq:ttot} amounts to a block-diagonal matrix, with blocks associated to
elements $\el$.
The supremum on the right-hand side of the inf-sup condition~\eqref{eq:bb:2} is usually called the \textit{optimal test norm}
\begin{align}
  \norm{\vv}{\V,\opt} := \sup_{\uu\in U}
    \frac{b(\uu,\vv)}{\norm{\uu}{U}}\quad \text{ for all } \vv\in V,
  \label{norm_V}
\end{align}
and the inf-sup condition~\eqref{eq:bb:2} then renders like
\begin{align}\label{normeq}
  \norm{\vv}{V} \lesssim \norm{\vv}{V,\opt}.
\end{align}
In view of the definition~\eqref{norm_V}, the bound~\eqref{normeq} amounts to the stability of the adjoint problem,
which is hence a major part in DPG analysis.
A feature of the DPG method is that it provides local error indicators. From~\eqref{eq:ttot} and~\eqref{norm_E}
it follows
\begin{align}
    C_{\rm infsup}\norm{\uu-\uu_\hp}{U} \le
  \norm{\uu-\uu_\hp}{E} = \norm{\Theta(\uu-\uu_\hp)}{V} = \norm{J^{-1}(L-B\uu_\hp)}{V}.
  \label{eq:est}
\end{align}
There are two implications of these relations. First, the residual in the $V'$-norm is the
error in energy norm and thus, controls it robustly. Second, if $C_{\rm infsup}$
is a constant independent of possible perturbation parameters then the residual is a
reliable and efficient estimator for the error in the $U$-norm. In this paper, we control
the error of the field variables in the balanced norm (see Theorem~\ref{thm} below),
but not the skeleton variables.

If the norm in $V$ is broken with respect to the mesh $\TT$, then the right-hand side
in \eqref{eq:est} provides a local a posteriori error estimate.
It should be mentioned that convergence of adaptive algorithms based on this estimator
has not been analyzed even for the Poisson problem.
\subsection{Sobolev spaces} \label{sec_Sobolev}
Let us first introduce some notation. For a set $\omega\subset\R^d$, $L_2(\omega)$,
$H^1(\omega)$, $H^1_0(\omega)$, $\HH(\div,\omega)$, and $\HH(\ccurl,\omega)$
are the standard Sobolev spaces with usual norms.
The norm in $L_2(\Omega)$ will be denoted by $\norm{\cdot}{}$.
The dual space of $H^1_0(\Omega)$ is denoted by $H^{-1}(\Omega)$ with norm $\norm{\cdot}{-1}$.
Throughout, spaces with bold face symbols, e.g. $\HH(\div,\omega)$, refer to spaces of vector-valued
functions. The $L_2(\Omega)$ and $\LL_2(\Omega)$ inner products and their extensions by duality
are denoted by $\vdual{\cdot}{\cdot}$. For $\omega\subset\R^d$, $\dual{\cdot}{\cdot}_{\partial\omega}$ refers
to this duality on the boundary of $\omega$. We will also need the spaces
\begin{align*}
   H^1(\Delta,\Omega) &:= \{w\in H^1(\Omega);\; \Delta w\in L_2(\Omega)\},\\
   H^1_0(\Delta,\Omega) &:= H^1(\Delta,\Omega) \cap H^1_0(\Omega).
\end{align*}
The setting of our continuous and discrete formulations is based on broken spaces, related to partitions
$\TT$ of $\Omega$. Let $\TT$ denote such a partition (or mesh) that is compatible with the geometry, i.e.,
$\TT$ is a finite set, and the elements $\el\in\TT$ are mutually disjoint, open sets with
$\bigcup_{\el\in\TT} \overline\el = \overline\Omega$.
Related to $\TT$, we introduce broken Sobolev spaces
\begin{align*}
   \HH(\div,\TT) &:= \{\qq\in \LL_2(\Omega);\; \qq|_\el\in \HH(\div,\el)\ \forall\el\in\TT\},\\
   H^1(\TT) &:= \{w\in L_2(\Omega);\; w|_\el\in H^1(\el)\ \forall\el\in\TT\},\\
   H^1(\Delta,\TT) &:= \{w\in H^1(\TT);\; \Delta w|_\el\in L_2(\el)\ \forall\el\in\TT\},
\end{align*}
and corresponding broken operators $\pwdiv$, $\pwnabla$, $\pwDelta$ which are defined
piecewise with respect to elements.
We also define trace spaces on the skeleton $\cS$ of the mesh. It is convenient to
consider $\cS$ as the collection of boundaries of elements, $\cS:=\{\partial\el;\; \el\in\TT\}$,
rather than a single geometric object, and to define spaces on $\cS$ as product spaces
(of components which are not independent).
Correspondingly, the ``normal vector'' $\n$ on $\cS$ consists of components $\n_\el$ which
are the exterior normal vectors of unit length on $\partial\el$ ($\el\in\TT$).

The space $H^{1/2}_{00}(\cS)$ consists of elements $\utrace=\Pi_{\el\in\TT}\utrace_\el$ 
whose components are traces of $H^1_0(\Omega)$ functions. It is equipped with the norm
\begin{align} \label{norm_ua}
  \norm{\utrace}{1/2,\cS}
  &:=
  \inf \Big\{
    \bigl(\norm{w}{}^2 + \eps^{2\alpha}\norm{\nabla w}{}^2\bigr)^{1/2};\;
    w\in H^1_0(\Omega),\ w|_{\partial\el} = \utrace_\el\ \forall\el\in\TT
  \Big\}.
\end{align}
The space $H^{-1/2}(\cS)$ consists of elements $\sigmatrace=\Pi_{\el\in\TT}\sigmatrace_\el$
whose components are normal components of $\HH(\div,\Omega)$ functions, equipped with the norm
\begin{align} \label{norm_sigmaa}
  \norm{\sigmatrace}{-1/2,\cS}
  &:=
  \inf \Big\{
    \bigl(\norm{\qq}{}^2 + \eps^{2\beta}\norm{\div\qq}{}^2\bigr)^{1/2};\;
    \qq\in \HH(\div,\Omega),\ \qq\cdot\n_\el|_{\partial\el} = \sigmatrace_\el\ \forall\el\in\TT
  \Big\}.
\end{align}
For $v\in H^1(\TT)$ and $\ttau\in\HH(\div,\TT)$,
corresponding dual norms of their jumps are denoted by
\begin{align*}
   &\norm{\jump{v}}{1/2,\cS'}
   :=
   \sup_{\varphi\in H^{-1/2}(\cS)} \frac {\dual{v}{\varphi}}{\norm{\varphi}{-1/2,\cS}}
   \quad\text{with}\quad
   \dual{v}{\varphi}:= \sum_{\el\in\TT} \dual{v}{\varphi_\el}_{\partial\el},
   \\
   &\norm{\jump{\ttau\cdot\n}}{-1/2,\cS'}
   :=
   \sup_{\varphi\in H^{1/2}_{00}(\cS)}
   \frac {\dual{\ttau\cdot\n}{\varphi}}{\norm{\varphi}{1/2,\cS}}
   \quad\text{with}\quad
   \dual{\ttau\cdot\n}{\varphi}
   := \sum_{\el\in\TT} \dual{\ttau\cdot\n_\el}{\varphi_\el}_{\partial\el}.
\end{align*}
Here and in the following, suprema are taken over non-zero elements of spaces.

\subsection{Variational formulation, DPG scheme and main result} \label{sec_VF}
Let us develop a variational formulation of our model problem.
We write~\eqref{model:1} as the first-order system
\begin{equation}\label{fos}
  \eps^{-\alpha}\ssigma - \nabla u = 0,\quad
  \rho - \div\ssigma = 0,\quad
  -\eps^{1-\alpha}\rho + u = f,
\end{equation}
and define
\[
   \U
   :=
   L_2(\Omega)\times\LL_2(\Omega) \times L_2(\Omega)\times H^{1/2}_{00}(\cS) \times
   H^{1/2}_{00}(\cS)\times H^{-1/2}(\cS)\times H^{-1/2}(\cS).
\]
Now, let $\TT$ be a mesh (as defined previously) and let $\ttau$, $\mu$, and $v$ be $\TT$-piecewise smooth functions.
We multiply the first, second and third relations in \eqref{fos}, respectively, by
$\ttau$, $\mu$, and $v-\eps^\beta\pwDelta v$, and integrate piecewise by parts. Then,
 we obtain the following variational formulation.
\emph{Find $(u,\ssigma,\rho,\ua,\ub,\sigmaa,\sigmab)\in \U$ such that
\begin{subequations}\label{b}
\begin{align}
  \eps^{-\alpha} \vdual{\ssigma}{\ttau} + \vdual{u}{\pwdiv\ttau} - \dual{\ua}{\ttau\cdot\n} &= 0\label{b:1}\\
  \vdual{\rho}{\mu} + \vdual{\ssigma}{\pwnabla\mu} - \dual{\sigmaa}{\mu} &= 0\label{b:2}\\
  \left.
  \begin{aligned}
  \eps^{1-\alpha} \vdual{\ssigma}{\pwnabla v} - \eps^{1-\alpha}\dual{\sigmab}{v}
  +\vdual{u}{v}\\
  + \eps^{1-\alpha+\beta}\vdual{\rho}{\pwDelta v} + \eps^{\beta-\alpha} \vdual{\ssigma}{\pwnabla v}
  - \eps^{\beta} \dual{\ub}{\pwnabla v\cdot\n}
  \end{aligned}
  \right\} &= \vdual{f}{v - \eps^\beta\pwDelta v}\label{b:3}
\end{align}
\end{subequations}
for all $(\ttau,\mu,v)\in \V := \HH(\div,\TT)\times H^1(\TT)\times H^1(\Delta,\TT)$.}
The left-hand side of \eqref{b} defines our bilinear form
\begin{align*}
   b(\uu,\vv)
   :=\;
     &\vdual{u}{\pwdiv\ttau+v}
   + \vdual{\ssigma}
           {\eps^{-\alpha}\ttau+\pwnabla\mu+(\eps^{1-\alpha}+\eps^{\beta-\alpha})\pwnabla v}
   + \vdual{\rho}{\mu+\eps^{1-\alpha+\beta}\pwDelta v}\\
   &- \dual{\ua}{\ttau\cdot\n} - \eps^\beta\dual{\ub}{\pwnabla v\cdot\n}
   - \dual{\sigmaa}{\mu} - \dual{\sigmab}{\eps^{1-\alpha}v}
\end{align*}
for $\uu=(u,\ssigma,\rho,\ua,\ub,\sigmaa,\sigmab)$ and $\vv=(\ttau,\mu,v)$.
The right-hand side of \eqref{b} is abbreviated by
$L(\vv):=\vdual{f}{v - \eps^\beta\pwDelta v}$ with $\vv$ as before.

In $\U$ and $\V$ we introduce, respectively, the norms
\begin{align}
  \label{norm_U}
  \norm{(u,\ssigma,\rho,\ua,\ub,\sigmaa,\sigmab)}{\U}^2
  &:=
  \norm{u}{}^2 + \norm{\ssigma}{}^2 + \eps^{2\beta}\norm{\rho}{}^2
  \nonumber \\
  &\quad
  + \norm{\ua}{1/2,\cS}^2 + \norm{\ub}{1/2,\cS}^2
  + \norm{\sigmaa}{-1/2,\cS}^2 + \norm{\sigmab}{-1/2,\cS}^2,
  \\ \nonumber
  \norm{(\ttau,\mu,v)}{\V}^2
  &:=
  \eps^{-2\alpha}\norm{\ttau}{}^2 + \norm{\pwdiv\ttau}{}^2
  + \eps^{-2\beta}\norm{\mu}{}^2 + \norm{\pwnabla\mu}{}^2
  \\ \nonumber
  &\quad
  + \norm{v}{}^2 + \eps^{2(\beta-\alpha)}\norm{\pwnabla v}{}^2
  + \eps^{2(1-\alpha)}\norm{\pwDelta v}{}^2.
\end{align}
The norm in $\V$ is induced by the inner product $\langle \cdot,\cdot\rangle_V$.
Note that in accordance with Section~\ref{sec_intro_dpg}, the space $\V$ and its inner product
are of the broken type.
The optimal test norm \eqref{norm_V} in $\V$ is
\begin{align} \label{norm_Vopt}
   &\norm{(\ttau,\mu,v)}{\V,\opt}^2
   \nonumber\\
   &=
   \norm{\pwdiv\ttau+v}{}^2
   +
   \norm{\eps^{-\alpha}\ttau+\pwnabla\mu+(\eps^{1-\alpha}+\eps^{\beta-\alpha})\pwnabla v}{}^2
   \nonumber
   +
   \eps^{-2\beta}\norm{\mu+\eps^{1-\alpha+\beta}\pwDelta v}{}^2
   \nonumber\\
   &\quad+
   \norm{\jump{\ttau\cdot\n}}{-1/2,\cS'}^2
   +
   \norm{\jump{\eps^\beta\pwnabla v\cdot\n}}{-1/2,\cS'}^2
   +
   \norm{\jump{\mu}}{1/2,\cS'}^2
   +
   \norm{\jump{\eps^{1-\alpha} v}}{1/2,\cS'}^2.
\end{align}
\begin{remark}
We note that the optimal test norm is not of broken type due to the appearance of norms of jumps of test functions.
The related inner product is therefore not appropriate for the calculation of optimal test functions,
cf.~\eqref{eq:ttot}. The corresponding problems would not be local. However, there are advocates of using
the so-called quasi-optimal test norm which consists in replacing the jump terms by (scaled) $L_2$-norms of
test functions. In our case it (its squared value) would be
\begin{align*}
   &\norm{\pwdiv\ttau+v}{}^2
   +
   \norm{\eps^{-\alpha}\ttau+\pwnabla\mu+(\eps^{1-\alpha}+\eps^{\beta-\alpha})\pwnabla v}{}^2
   +
   \eps^{-2\beta}\norm{\mu+\eps^{1-\alpha+\beta}\pwDelta v}{}^2
   \\
   &\qquad+
   c_1\norm{\ttau}{}^2
   +
   c_2\norm{\mu}{}^2
   +
   c_3\norm{v}{}^2
\end{align*}
with appropriate numbers $c_1,c_2,c_2\ge 0$.
Using this norm to define test functions would simplify the stability analysis required for \eqref{normeq}
in the sense that it suffices to bound the $L_2$-norm of test functions.
This is a path analyzed in \cite{NiemiCC_13_ASD} for convection-dominated diffusion, see also \cite{BroersenS_14_RPG}.

In any case, aiming at robustness for singularly perturbed problems,
the calculation of optimal test functions with respect to any appropriate inner product will lead to
singularly perturbed problems. The advantage of using broken spaces is that these are local problems on
elements. In the case of optimal or quasi-optimal test norms, test functions are coupled
so that local problems are more complicated and harder to solve. Additionally, in these cases the singularly
perturbed problems are not of standard type so that it is not straightforward to design and analyze efficient
approximation schemes for optimal test functions. In \cite{NiemiCC_13_ASD}, Niemi, Collier and Calo deal
with this very problem in the case of convection-dominated problems.

In our case with three test functions,
it is non-trivial to analyze and solve the coupled singularly-perturbed problems stemming from
a quasi-optimal test norm. Instead, we prefer to simplify these problems by separating functions.
Calculating test functions with respect to our test norm $\norm{\cdot}{V}$ leads to solving (on elements)
three separate problems with bilinear forms
\begin{align*}
   &\eps^{-2\alpha} (\cdot,\cdot) + (\div\cdot,\div\cdot) \quad\text{for } \ttau, \qquad
   \eps^{-2\beta} (\cdot,\cdot) + (\nabla\cdot,\nabla\cdot) \quad\text{for } \mu,\\
   &\text{and}\quad
   (\cdot,\cdot) + \eps^{2(\beta-\alpha)} (\nabla\cdot,\nabla\cdot)
                 + \eps^{2(1-\alpha)}(\Delta\cdot,\Delta\cdot) \quad\text{for } v,
\end{align*}
with $(\cdot,\cdot)$ denoting the $L_2$-bilinear form on an individual element. Though singularly perturbed,
these are standard elliptic problems so that an analysis of the influence of approximating optimal test functions
appears more accessible. As previously mentioned, our analysis is based on using exact optimal test functions.
\end{remark}

Our main result is the following norm equivalence in $\U$. It induces corresponding error
estimates for the DPG method, recalled by Corollary~\ref{cor_thm}.

\begin{theorem} \label{thm}
Choose $\alpha=1/4$ and $\beta=1/2$. For the setting introduced in this section,
\eqref{eq:bb:1}--\eqref{eq:bb:3} hold with numbers $C_{\rm infsup}$ and $C_{\rm b}$
that depend on $\eps$. More specifically, we have robust control of the field variables in the sense that
\begin{align*}
\lefteqn{
  \norm{u}{} + \norm{\ssigma}{} + \eps^{1/2} \norm{\rho}{}
}
  \\
  &\qquad+ \eps^{3/4} \norm{\ua}{1/2,\cS} + \eps^{1/2} \norm{\ub}{1/2,\cS}
  + \eps^{3/4} \norm{\sigmaa}{-1/2,\cS} + \eps^{5/4} \norm{\sigmab}{-1/2,\cS}
  \lesssim
  \norm{\uu}{E}
\end{align*}
and
\begin{align} \label{upper}
  \norm{\uu}{E}
  &\lesssim
  \norm{u}{} + \norm{\ssigma}{} + \eps^{1/2} \norm{\rho}{}
  + \norm{\ua}{1/2,\cS}
  \nonumber \\
  &\quad + \eps^{-1/4} \norm{\ub}{1/2,\cS}
  + \norm{\sigmaa}{-1/2,\cS} + \eps^{1/4} \norm{\sigmab}{-1/2,\cS}
\end{align}
for any $\uu=(u,\ssigma,\rho,\ua,\ub,\sigmaa,\sigmab)\in \U$.
The constants appearing in both estimates are independent of $\TT$ and $\eps>0$.
\end{theorem}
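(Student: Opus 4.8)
The plan is to verify the three Babuška–Brezzi conditions \eqref{eq:bb:1}--\eqref{eq:bb:3} and then, separately, the two norm inequalities in the statement; by Theorem~\ref{thm:dpg} the energy norm $\norm{\cdot}{E}$ coincides with the optimal test norm's dual characterization, so the content of the theorem is really the pair of two-sided bounds between $\norm{\cdot}{E}$ and the (scaled) balanced norm. The boundedness \eqref{eq:bb:3} is routine: every term of $b(\uu,\vv)$ is a product of one $\U$-component and one $\V$-component, and by Cauchy–Schwarz together with the definitions \eqref{norm_U}, \eqref{norm_V} of the norms each term is controlled, where the duality pairings $\dual{\ua}{\ttau\cdot\n}$ etc.\ are handled by the very definitions \eqref{norm_ua}--\eqref{norm_sigmaa} of the trace norms and the corresponding jump norms in \eqref{norm_Vopt}. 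Injectivity \eqref{eq:bb:1} follows by the usual DPG argument: testing with smooth compactly supported $\vv$ on each element recovers the first-order system \eqref{fos} elementwise (hence $u,\ssigma,\rho$ and their interelement continuity), and then testing with functions having nonzero jumps forces the skeleton unknowns to vanish.

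The core is the inf-sup bound \eqref{eq:bb:2}, equivalently \eqref{normeq}: $\norm{\vv}{V}\lesssim\norm{\vv}{V,\opt}$ with a constant independent of $\eps$ (after fixing $\alpha=1/4$, $\beta=1/2$). Following the strategy announced in the introduction, I would phrase this as a stability estimate for the adjoint problem: given $\vv=(\ttau,\mu,v)\in\V$, I must bound each of the seven terms of $\norm{\vv}{V}^2$ by the terms appearing in $\norm{\vv}{V,\opt}^2$. The natural route is to test the adjoint equations with well-chosen trial functions; concretely, one constructs, for the given $\vv$, functions $(u,\ssigma,\rho)\in L_2\times\LL_2\times L_2$ (and skeleton data) realizing the suprema in \eqref{norm_V}. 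Here the key structural observation — the reason the third equation is tested with $v-\eps^\beta\pwDelta v$ and the reason the extra unknown $\rho$ was introduced — is that the combination produces a coercive-type contribution: schematically, pairing the third relation with $v$ gives $\norm{v}{}^2$ plus $\eps$-weighted gradient/Laplacian terms, while the presence of $\rho\sim\div\ssigma$ lets one absorb the cross terms. The delicate point is the correct bookkeeping of the $\eps$-powers so that the weights $\eps^{-2\alpha}$ on $\norm{\ttau}{}^2$, $\eps^{-2\beta}$ on $\norm{\mu}{}^2$, and $\eps^{2(1-\alpha)}$ on $\norm{\pwDelta v}{}^2$ all emerge with $\eps$-independent constants; this is exactly what forces $\beta=2\alpha=1/2$ (cf.\ Remark~\ref{rem_alpha_beta}), since any other choice leaves an uncontrollable power of $\eps$ somewhere.

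For the two displayed norm inequalities: by Theorem~\ref{thm:dpg}, $\norm{\uu}{E}\simeq\norm{\uu}{\U}$ with the hidden constants being $C_{\rm infsup}$ and $C_{\rm b}$, which a priori depend on $\eps$. The first (lower) inequality of Theorem~\ref{thm} says that, despite that, the specific $\eps$-scaled combination of field and skeleton norms on its left is bounded by $\norm{\uu}{E}$ robustly; one obtains it by inserting into the supremum \eqref{norm_E} defining $\norm{\uu}{E}$ the explicit test functions built in the inf-sup step, so it is essentially a corollary of the stability analysis rather than a separate computation. The second inequality \eqref{upper} is a direct upper bound: using boundedness of $b$ against the optimal test norm and the robust equivalence \eqref{normeq}, one estimates $\norm{\uu}{E}=\sup_\vv b(\uu,\vv)/\norm{\vv}{V}\lesssim\sup_\vv b(\uu,\vv)/\norm{\vv}{V,\opt}$, and then bounds the latter by checking that each trial component pairs, through $b$, only with a combination of $\V$-components that appears in $\norm{\vv}{V,\opt}$ with at worst the advertised $\eps$-weight; the sub-optimal factor $\eps^{-1/4}$ on $\norm{\ub}{1/2,\cS}$ is precisely where the pairing $\eps^\beta\dual{\ub}{\pwnabla v\cdot\n}=\eps^{1/2}\dual{\ub}{\pwnabla v\cdot\n}$ meets the jump norm $\norm{\jump{\eps^\beta\pwnabla v\cdot\n}}{-1/2,\cS'}$ and the mismatch in the trace-norm weight $\eps^{2\alpha}=\eps^{1/2}$ produces $\eps^{1/2}/\eps^{3/4}=\eps^{-1/4}$.

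The step I expect to be the main obstacle is the robust inf-sup bound \eqref{eq:bb:2}: it is not merely a bookkeeping exercise because the optimal test norm \eqref{norm_Vopt} contains the three $L_2$-combinations $\norm{\pwdiv\ttau+v}{}$, $\norm{\eps^{-\alpha}\ttau+\pwnabla\mu+(\eps^{1-\alpha}+\eps^{\beta-\alpha})\pwnabla v}{}$, $\eps^{-\beta}\norm{\mu+\eps^{1-\alpha+\beta}\pwDelta v}{}$ in which the individual pieces $\ttau$, $\pwnabla\mu$, $\pwnabla v$, $\pwDelta v$ are entangled, and one must disentangle them — separately controlling $\eps^{-\alpha}\norm{\ttau}{}$, $\norm{\pwdiv\ttau}{}$, $\eps^{-\beta}\norm{\mu}{}$, $\norm{\pwnabla\mu}{}$, $\norm{v}{}$, $\eps^{\beta-\alpha}\norm{\pwnabla v}{}$, $\eps^{1-\alpha}\norm{\pwDelta v}{}$ — using only these combinations plus the four jump norms, and with $\eps$-uniform constants. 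This requires an argument that exploits the global structure (e.g.\ lifting the skeleton jump data to global $H^1_0$/$\HH(\div)$ functions via the definitions of the $\cS'$-norms, then using a Poincaré- or Friedrichs-type inequality on $\Omega$ adapted to the $\eps$-weighting) rather than anything that can be done element by element, and it is here that the precise choice $\alpha=1/4$, $\beta=1/2$ is indispensable.
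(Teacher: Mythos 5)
Your overall architecture matches the paper's: boundedness and injectivity are routine (Lemmas~\ref{lem:b:stab} and~\ref{lem:inj}), the crux is the robust adjoint stability \eqref{normeq}, and the two displayed inequalities then follow from Theorem~\ref{thm:dpg} and from a refined boundedness estimate. But there is a genuine gap at exactly the point you flag as the main obstacle: you identify the difficulty of ``disentangling'' the entangled combinations in $\norm{\vv}{\V,\opt}$, yet you do not supply any argument that does it. The paper's mechanism is a superposition. Given $\vv$, one first solves the \emph{inhomogeneous} adjoint system, with the three $L_2$-combinations of \eqref{norm_Vopt} as data, by a globally conforming triple $(\ttau_1,\mu_1,v_1)\in\HH(\div,\Omega)\times H^1_0(\Omega)\times H^1_0(\Delta,\Omega)$ (Lemma~\ref{lem:adj:inhom}: Lax--Milgram for a coercive form on $H^1_0(\Delta,\Omega)$ in the norm whose square is $\eps^{1+\beta}\norm{\Delta\cdot}{}^2+(\eps+\eps^\beta)\norm{\nabla\cdot}{}^2+\norm{\cdot}{}^2$, followed by a separate duality argument against $(-\Delta+\eps^{-1})\varphi$ to recover the $H^1$-bound on $\mu_1$). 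The remainder $\vv_0=\vv-\vv_1$ then solves the \emph{homogeneous} broken adjoint system and is controlled purely by the four jump norms (Lemmas~\ref{lem:adj:hom:aux} and~\ref{lem:adj:hom}), using Helmholtz decompositions of $\pwnabla v_0$ and of an auxiliary field $\llambda$, together with weak solutions of $-\eps^\beta\Delta\psi+\psi=-w$, to convert jump dualities into $L_2$-bounds with the advertised $\eps$-powers. Your mention of lifting skeleton data and a Poincar\'e-type inequality gestures at the second half, but none of these constructions is actually present, and without them the central claim --- that $\alpha=1/4$, $\beta=1/2$ yields $\eps$-uniform constants --- remains unverified.

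A second, smaller but real error concerns \eqref{upper}: the chain $\norm{\uu}{E}=\sup_\vv b(\uu,\vv)/\norm{\vv}{\V}\lesssim\sup_\vv b(\uu,\vv)/\norm{\vv}{\V,\opt}$ goes the wrong way. Since \eqref{normeq} reads $\norm{\vv}{\V}\lesssim\norm{\vv}{\V,\opt}$, dividing by the smaller quantity enlarges the ratio, so this chain produces a \emph{lower} bound on $\norm{\uu}{E}$, not an upper one. The correct (and simpler) route is the one in Lemma~\ref{lem:b:stab}: estimate $b(\uu,\vv)$ directly against $\norm{\vv}{\V}$ term by term, handling the skeleton dualities by piecewise integration by parts against liftings of $\ua,\ub,\sigmaa,\sigmab$; the factor $\eps^{\alpha+\beta-1}=\eps^{-1/4}$ in front of $\norm{\ub}{1/2,\cS}$ (and $\eps^{1-\alpha-\beta}=\eps^{1/4}$ in front of $\norm{\sigmab}{-1/2,\cS}$) then falls out of the weight mismatch you correctly identify.
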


\begin{proof}
  Technical details of the proof are given in the remainder of this paper. More precisely,
  condition~\eqref{eq:bb:1} is shown in Lemma~\ref{lem:inj}. The inf-sup condition~\eqref{eq:bb:2}
  or, equivalently,~\eqref{normeq} is shown in Corollary~\ref{cor} (the right-hand side involves different scalings of $\eps$ for
  the skeleton terms). The condition~\eqref{eq:bb:3} is shown in Lemma~\ref{lem:b:stab}. Hence, the first bound follows from
  Theorem~\ref{thm:dpg}.
    The second bound follows directly from Lemma~\ref{lem:b:stab}.
\end{proof}

\begin{cor} \label{cor_thm}
Select $\alpha=1/4$, $\beta=1/2$. 
Then, there exist solutions
$\uu=(u,\ssigma,\rho,\ua,\ub,\sigmaa,\sigmab)\in U$ and
$\uu_\hp=(u_\hp,\ssigma_\hp,\rho_\hp,\ua_\hp,\ub_\hp,\sigmaa_\hp,\sigmab_\hp)\in U_\hp$
of~\eqref{thm:dpg:eq:weak} and~\eqref{thm:dpg:eq:discrete}, respectively.
We have the robust error estimate
\begin{align*}
\lefteqn{
  \norm{u-u_\hp}{} + \norm{\ssigma-\ssigma_\hp}{} + \eps^{1/2} \norm{\rho-\rho_\hp}{}
  + \eps^{3/4} \norm{\ua-\ua_\hp}{1/2,\cS} + \eps^{1/2} \norm{\ub-\ub_\hp}{1/2,\cS}
}
  \\
  &+ \eps^{3/4} \norm{\sigmaa-\sigmaa_\hp}{-1/2,\cS} + \eps^{5/4} \norm{\sigmab-\sigmab_\hp}{-1/2,\cS}
  \lesssim
  \inf \{\norm{\uu-\ww}{E};\; \ww\in U_\hp\}.
\end{align*}
The hidden constant is independent of $\TT$, $U_\hp$ and $\eps>0$.
The best approximation in the energy norm can be bounded from above, as in \eqref{upper}.
\end{cor}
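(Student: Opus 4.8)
The plan is to read the statement off from the abstract framework of Theorem~\ref{thm:dpg} together with the robust two-sided estimate established in Theorem~\ref{thm}, taking care that the $\eps$-robustness is not lost along the way.

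First I would record that, for $\alpha=1/4$ and $\beta=1/2$, Theorem~\ref{thm} guarantees that~\eqref{eq:bb:1}--\eqref{eq:bb:3} hold for the pair $(\U,\V)$, the bilinear form $b$, and the norms of Subsection~\ref{sec_VF}; the constants $C_{\rm infsup}$ and $C_{\rm b}$ there may depend on $\eps$ but are strictly positive for each fixed $\eps\in(0,1]$. To invoke Theorem~\ref{thm:dpg} one also needs $L\in\V'$, which I would check directly: since $f\in L_2(\Omega)$ and $\eps\le 1$ give $\eps^{\beta}\norm{\pwDelta v}{}\le\eps^{-1/4}\,\eps^{1-\alpha}\norm{\pwDelta v}{}\le\eps^{-1/4}\norm{\vv}{\V}$, one obtains $|L(\vv)|\le\norm{f}{}\bigl(\norm{v}{}+\eps^{\beta}\norm{\pwDelta v}{}\bigr)\lesssim\eps^{-1/4}\norm{f}{}\norm{\vv}{\V}$, hence $L\in\V'$ for every fixed $\eps>0$. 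Theorem~\ref{thm:dpg} then yields existence and uniqueness of $\uu\in\U$ solving~\eqref{thm:dpg:eq:weak} and of $\uu_\hp\in U_\hp$ solving~\eqref{thm:dpg:eq:discrete}, together with the quasi-optimality identity $\norm{\uu-\uu_\hp}{E}=\inf\{\norm{\uu-\ww}{E};\ \ww\in U_\hp\}$ from~\eqref{thm:dpg:eq:approx}.

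The decisive step is bounding the balanced norm of the error from above. Here one must \emph{not} use the generic inequality $C_{\rm infsup}\norm{\uu-\uu_\hp}{\U}\le\norm{\uu-\uu_\hp}{E}$ from~\eqref{norm_E}, whose constant degenerates as $\eps\to0$; instead I would apply the first (robust) estimate of Theorem~\ref{thm} to the element $\uu-\uu_\hp\in\U$, whose constant is independent of $\TT$ and $\eps$, giving
\begin{align*}
  \norm{u-u_\hp}{}+\norm{\ssigma-\ssigma_\hp}{}+\eps^{1/2}\norm{\rho-\rho_\hp}{}+\cdots\ \lesssim\ \norm{\uu-\uu_\hp}{E}.
\end{align*}
Combining with the quasi-optimality identity above produces exactly the asserted error bound, with a hidden constant independent of $\TT$, $U_\hp$ and $\eps>0$. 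For the final assertion I would apply the upper bound~\eqref{upper} of Theorem~\ref{thm} to each difference $\uu-\ww$ with $\ww\in U_\hp$ (again $\uu-\ww\in\U$) and pass to the infimum over $\ww\in U_\hp$, which bounds $\inf\{\norm{\uu-\ww}{E};\ \ww\in U_\hp\}$ by the infimum of the (sub-optimally weighted) right-hand side of~\eqref{upper}. There is essentially no analytic obstacle: all the work sits in Theorem~\ref{thm}, and the present statement is a bookkeeping consequence of it and of Theorem~\ref{thm:dpg}. The only point requiring genuine care is the one just highlighted — preserving $\eps$-robustness by invoking the robust bounds of Theorem~\ref{thm} rather than the $\eps$-dependent Babu\v{s}ka--Brezzi constants — plus the routine verification $L\in\V'$ ensuring that the solutions in the statement exist in the first place.
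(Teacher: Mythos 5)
Your proposal is correct and follows essentially the same route as the paper: invoke Theorem~\ref{thm} to verify the hypotheses of Theorem~\ref{thm:dpg}, obtain existence and the best-approximation identity \eqref{thm:dpg:eq:approx}, and then apply the robust lower bound of Theorem~\ref{thm} to $\uu-\uu_\hp$ (and \eqref{upper} to $\uu-\ww$) to conclude. Your added checks — that $L\in\V'$ and that one must use the $\eps$-independent estimate of Theorem~\ref{thm} rather than the generic constant $C_{\rm infsup}$ — are sensible explicit versions of steps the paper leaves implicit.
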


\begin{proof}
  In Theorem~\ref{thm} we showed that our setting fulfills the assumptions from Theorem~\ref{thm:dpg}.
  This shows that the continuous and discrete solutions exist uniquely.
  Additionally, the method delivers the best approximation
  in the energy norm, cf.~\eqref{thm:dpg:eq:approx}. Therefore, the norm estimates from Theorem~\ref{thm} prove the statements.
\end{proof}

\subsection{Boundedness and definiteness of the bilinear form} \label{sec_b}\quad\\

\begin{lemma}\label{lem:b:stab}
  For $\alpha, \beta\in[0,1]$ with $\alpha+\beta\le 1$ and $\epsilon>0$,
  the bilinear form $b:\U\times \V\rightarrow\R$ is bounded:
\begin{align*}
   b(\uu,\vv)
   &\lesssim
   \Bigl(  \norm{u}{} + \norm{\ssigma}{} + \eps^\beta \norm{\rho}{}
   \\ & \qquad 
         + \norm{\ua}{1/2,\cS} + \eps^{\alpha+\beta-1} \norm{\ub}{1/2,\cS}
         + \norm{\sigmaa}{-1/2,\cS} + \eps^{1-\alpha-\beta} \norm{\sigmab}{-1/2,\cS}
   \Bigr)
   \norm{\vv}{\V}
   \\
   &\lesssim
   \eps^{\alpha+\beta-1} \norm{\uu}{\U} \norm{\vv}{\V}
   \quad\forall \uu=(u,\ssigma,\rho,\ua,\ub,\sigmaa,\sigmab)\in \U,\ \forall \vv=(\ttau,\mu,v)\in \V.
\end{align*}
\end{lemma}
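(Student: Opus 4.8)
The plan is to estimate $b(\uu,\vv)$ term by term using the Cauchy--Schwarz inequality, matching each contribution against an appropriate component of $\norm{\vv}{\V}$, and to handle the skeleton duality pairings via the definitions of $\norm{\cdot}{1/2,\cS}$, $\norm{\cdot}{-1/2,\cS}$ and the trial-to-test duality. Concretely, I would split $b(\uu,\vv)$ into the volume terms $\vdual{u}{\pwdiv\ttau+v}$, $\vdual{\ssigma}{\eps^{-\alpha}\ttau+\pwnabla\mu+(\eps^{1-\alpha}+\eps^{\beta-\alpha})\pwnabla v}$, $\vdual{\rho}{\mu+\eps^{1-\alpha+\beta}\pwDelta v}$, and the four skeleton terms $\dual{\ua}{\ttau\cdot\n}$, $\eps^\beta\dual{\ub}{\pwnabla v\cdot\n}$, $\dual{\sigmaa}{\mu}$, $\dual{\sigmab}{\eps^{1-\alpha}v}$. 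The bound on each skeleton term follows from the definition of the dual norms in Section~\ref{sec_Sobolev}: for instance, $\abs{\dual{\ua}{\ttau\cdot\n}}\le \norm{\ua}{1/2,\cS}\,\norm{\jump{\ttau\cdot\n}}{-1/2,\cS'}$, and $\norm{\jump{\ttau\cdot\n}}{-1/2,\cS'}\lesssim (\norm{\ttau}{}^2+\norm{\pwdiv\ttau}{}^2)^{1/2}\lesssim \norm{\vv}{\V}$ since $\eps\le 1$ so $\eps^{-2\alpha}\ge 1$; here one must be slightly careful that the $\HH(\div)$-lifting used to define $\norm{\cdot}{-1/2,\cS}$ carries the weight $\eps^{2\beta}$ on the divergence term, which is $\le 1$, hence dominated by the unweighted $\norm{\pwdiv\ttau}{}^2$ in $\norm{\vv}{\V}$.

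For the volume terms, $\abs{\vdual{u}{\pwdiv\ttau+v}}\le\norm{u}{}\,\norm{\pwdiv\ttau+v}{}\le\norm{u}{}\,(\norm{\pwdiv\ttau}{}+\norm{v}{})\lesssim\norm{u}{}\norm{\vv}{\V}$, and similarly $\abs{\vdual{\ssigma}{\eps^{-\alpha}\ttau+\pwnabla\mu+(\eps^{1-\alpha}+\eps^{\beta-\alpha})\pwnabla v}}\lesssim\norm{\ssigma}{}\norm{\vv}{\V}$ after observing that $\eps^{-\alpha}\norm{\ttau}{}$, $\norm{\pwnabla\mu}{}$ and $\eps^{\beta-\alpha}\norm{\pwnabla v}{}$ all appear in $\norm{\vv}{\V}$ and that $\eps^{1-\alpha}\le\eps^{\beta-\alpha}$ because $\beta\le 1$. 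The term with $\rho$ gives $\abs{\vdual{\rho}{\mu+\eps^{1-\alpha+\beta}\pwDelta v}}\le\eps^\beta\norm{\rho}{}\,\eps^{-\beta}\norm{\mu+\eps^{1-\alpha+\beta}\pwDelta v}{}\lesssim\eps^\beta\norm{\rho}{}\,(\eps^{-\beta}\norm{\mu}{}+\eps^{1-\alpha}\norm{\pwDelta v}{})\lesssim\eps^\beta\norm{\rho}{}\norm{\vv}{\V}$. The skeleton terms involving $\ub$ and $\sigmab$ carry the explicit $\eps$-powers $\eps^\beta$ and $\eps^{1-\alpha}$ respectively; pairing $\eps^\beta\dual{\ub}{\pwnabla v\cdot\n}$ against $\norm{\ub}{1/2,\cS}$ requires controlling $\norm{\jump{\eps^\beta\pwnabla v\cdot\n}}{-1/2,\cS'}$, and here the $H^{1/2}_{00}$-lifting used in the dual norm carries the weight $\eps^{2\alpha}$ on the gradient, so a short computation gives $\norm{\jump{\eps^\beta\pwnabla v\cdot\n}}{-1/2,\cS'}\lesssim\eps^\beta(\eps^{-\alpha}\norm{\pwnabla v}{}+\norm{\pwDelta v}{})$; collecting powers and using $\alpha+\beta\le 1$ shows this is $\lesssim\eps^{1-\alpha}\norm{\vv}{\V}$-comparable after factoring $\eps^{\alpha+\beta-1}$, which is where the stated weights $\eps^{\alpha+\beta-1}\norm{\ub}{1/2,\cS}$ and $\eps^{1-\alpha-\beta}\norm{\sigmab}{-1/2,\cS}$ enter. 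Summing the seven bounds and applying Cauchy--Schwarz in $\R^7$ yields the first inequality. The second inequality then follows trivially by bounding every weight by $\eps^{\alpha+\beta-1}$ (the largest of $1$, $\eps^{\alpha+\beta-1}$, $\eps^{1-\alpha-\beta}$, $\eps^\beta$ when $\eps\le 1$ and $\alpha+\beta\le1$, $\beta\le 1$) and recognizing the resulting bracket as $\norm{\uu}{\U}$ up to a constant.

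I expect the main obstacle to be bookkeeping the $\eps$-powers correctly in the two "inhomogeneous" skeleton terms (those with $\ub$ and $\sigmab$): one has to trace how the weights $\eps^{2\alpha}$ in $\norm{\cdot}{1/2,\cS}$ and $\eps^{2\beta}$ in $\norm{\cdot}{-1/2,\cS}$ interact with the explicit factors $\eps^\beta$, $\eps^{1-\alpha}$ in $b$, and then verify that under $\alpha+\beta\le 1$ and $\alpha,\beta\in[0,1]$ every resulting exponent lands where the lemma claims. The verification that $\norm{\jump{\ttau\cdot\n}}{-1/2,\cS'}\lesssim\norm{\vv}{\V}$ and $\norm{\jump{\mu}}{1/2,\cS'}\lesssim\norm{\vv}{\V}$ (the "homogeneous" skeleton terms) is routine once one notes $\eps\le 1$ makes the relevant weighted lifting norms bounded by the corresponding unweighted volume terms in $\norm{\vv}{\V}$; no trace theorem beyond the definition of the skeleton spaces is needed, since the dual norms are defined exactly as suprema over those spaces.
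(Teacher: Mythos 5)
Your proposal is correct and follows essentially the same route as the paper: Cauchy--Schwarz with matched $\eps$-weights on the volume terms, and piecewise integration by parts against the liftings defining the trace norms for the skeleton terms (your detour through the dual jump norms $\norm{\jump{\cdot}}{\mp1/2,\cS'}$ is the same computation in disguise). One small inaccuracy: the bound $\norm{\jump{\ttau\cdot\n}}{-1/2,\cS'}\lesssim(\norm{\ttau}{}^2+\norm{\pwdiv\ttau}{}^2)^{1/2}$ is not quite right, since the relevant lifting is the $H^1_0$-one with weight $\eps^{2\alpha}$ on the gradient (not the $\HH(\div)$-one), which yields $\norm{\jump{\ttau\cdot\n}}{-1/2,\cS'}\lesssim(\eps^{-2\alpha}\norm{\ttau}{}^2+\norm{\pwdiv\ttau}{}^2)^{1/2}$ — still $\le\norm{\vv}{\V}$, so the conclusion is unaffected.
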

\begin{proof}
  The volume terms are estimated with the Cauchy-Schwarz inequality.
  The terms on the skeleton are additionally integrated piecewise by parts. More precisely,
  for $w\in H^1_0(\Omega)$ with $\utrace_\el = w|_{\partial\el}$ for all $\el\in\TT$ we obtain
  \begin{align*}
    \dual{\utrace}{\ttau\cdot\n} \leq
    \left( \eps^{-2\alpha}\norm{\ttau}{}^2 + \norm{\pwdiv\ttau}{}^2 \right)^{1/2}
    \left( \norm{w}{}^2 + \eps^{2\alpha}\norm{\nabla w}{}^2 \right)^{1/2}
  \end{align*}
  and
  \begin{align*}
    \eps^\beta \dual{\utrace}{\pwnabla v\cdot\n} &\leq
    \left( \eps^{2(1-\alpha)}\norm{\pwDelta v}{}^2
    + \eps^{2(\beta-\alpha)} \norm{\pwnabla v}{}^2 \right)^{1/2}
    \left( \eps^{2(\alpha+\beta-1)}\norm{w}{}^2
    + \eps^{2\alpha} \norm{\nabla w}{}^2 \right)^{1/2},
  \end{align*}
  that is,
  \[
    \dual{\ua}{\ttau\cdot\n} + \eps^\beta \dual{\ub}{\pwnabla v\cdot\n}
    \lesssim
    \Bigl(\norm{\ua}{1/2,\cS} +  \eps^{\alpha+\beta-1} \norm{\ub}{1/2,\cS}\Bigr)
    \norm{\vv}{\V},\quad \vv=(\ttau,0,v).
  \]
  Furthermore, for $\qq\in\HH(\div,\Omega)$ with $\qq\cdot\n|_{\partial\el}=\sigmatrace_\el$
  for all $\el\in\TT$ it holds that
  \[
     \dual{\sigmatrace}{\mu}
     \le
     \Bigl(\norm{\qq}{}^2 + \eps^{2\beta} \norm{\div\qq}{}^2\Bigr)^{1/2}
     \Bigl(\eps^{-2\beta} \norm{\mu}{}^2 + \norm{\pwnabla\mu}{}^2\Bigr)^{1/2}
  \]
  and
  \begin{align*}
     \eps^{1-\alpha} \dual{\sigmatrace}{v}
     &\le
     \Bigl(  \eps^{2(1-\beta)} \norm{\qq}{}^2
           + \eps^{2(1-\alpha)} \norm{\div\qq}{}
     \Bigr)^{1/2}
     \Bigl(\norm{v}{}^2 + \eps^{2(\beta-\alpha)} \norm{\pwnabla v}{}^2\Bigr)^{1/2},
  \end{align*}
  that is,
  \[
     \dual{\sigmaa}{\mu} + \eps^{1-\alpha} \dual{\sigmab}{v}
     \lesssim
     \Bigl(\norm{\sigmaa}{-1/2,\cS} + \eps^{1-\alpha-\beta} \norm{\sigmab}{-1/2,\cS}\Bigr)
     \norm{\vv}{\V},
     \quad \vv=(0,\mu,v).
  \]
  This concludes the proof of the lemma.
\end{proof}

\begin{remark} \label{rem_b}
The previous lemma establishes uniform boundedness of the bilinear form $b(\cdot,\cdot)$
only if $\alpha+\beta=1$. Unfortunately, our quest for robustness of the DPG scheme will
lead to $\alpha+\beta=3/4$, i.e., $b(\cdot,\cdot)$ will not be bounded uniformly in $\eps$.
By definition of the energy norm (cf.~\eqref{norm_E}) this means that we will not have
a uniform bound $\norm{\cdot}{E}\lesssim\norm{\cdot}{\U}$ in $\U$, but rather the estimate
\eqref{upper} stated in Theorem~\ref{thm}.

It is straightforward to ensure uniform boundedness of $b(\cdot,\cdot)$. Having different
pairs of trace and flux variables, $(\ua,\sigmaa)$ and $(\ub,\sigmab)$, we can use different
norms. More precisely, employing the previously defined norms for $(\ua,\sigmaa)$,
cf.~\eqref{norm_ua} and \eqref{norm_sigmaa}, and
the norms
\begin{align*}
  \norm{\ub}{1/2,\cS,b}
  &:=
  \inf \Big\{
    (\eps^{2(\alpha+\beta-1)}\norm{w}{}^2 + \eps^{2\alpha}\norm{\nabla w}{}^2)^{1/2};\;
    w\in H^1_0(\Omega),\ w|_{\partial\el} = \ub_\el\ \forall\el\in\TT
  \Big\},
  \\
  \norm{\sigmab}{-1/2,\cS,b}
  &:=
  \inf \Big\{
    (\eps^{2(1-\beta)}\norm{\qq}{}^2 + \eps^{2(1-\alpha)}\norm{\div\qq}{}^2)^{1/2};\;
    \qq\in \HH(\div,\Omega),\ \qq\cdot\n_\el|_{\partial\el} = \sigmab_\el\ \forall\el\in\TT
  \Big\},
\end{align*}
for the second pair $(\ub,\sigmab)$, it is easy to show that then the bilinear form
is uniformly bounded. Note that for $\alpha+\beta=3/4$,
$\norm{\cdot}{1/2,\cS,b}$ is stronger than $\norm{\cdot}{1/2,\cS}$
and $\norm{\cdot}{-1/2,\cS,b}$ is weaker than $\norm{\cdot}{-1/2,\cS}$.

However, $\norm{\cdot}{1/2,\cS}$ and $\norm{\cdot}{-1/2,\cS}$
are the norms for the trace and flux (across $\cS$) that make the norm
$\norm{\cdot}{\U}$ balanced, cf.~\eqref{norm_U}. Indeed, for $\beta=2\alpha=1/2$,
\begin{align*}
  \norm{\utrace}{1/2,\cS}
  &:=
  \inf \Big\{
    \bigl(\norm{w}{}^2 + \eps^{1/2}\norm{\nabla w}{}^2\bigr)^{1/2};\;
    w\in H^1_0(\Omega),\ w|_{\partial\el} = \utrace_\el\ \forall\el\in\TT
  \Big\},
  \\
  \norm{\sigmatrace}{-1/2,\cS}
  &:=
  \inf \Big\{
    \bigl(\norm{\qq}{}^2 + \eps\norm{\div\qq}{}^2\bigr)^{1/2};\;
    \qq\in \HH(\div,\Omega),\ \qq\cdot\n_\el|_{\partial\el} = \sigmatrace_\el\ \forall\el\in\TT
  \Big\}
\end{align*}
are trace norms that are induced by
$\bigl(\norm{u}{}^2+\eps^{1/2}\norm{\nabla u}{}^2 + \eps^{3/2}\norm{\Delta u}{}^2\bigr)^{1/2}$,
the balanced norm proposed in \cite{LinS_12_BFE} (note that $\sigmatrace$ is the normal trace of
$\ssigma=\eps^\alpha\nabla u$).

Now, using the stronger norm $\norm{\cdot}{1/2,\cS,b}$ for $\ub$ in $\norm{\cdot}{\U}$ means
that we would not be able to prove a robust best approximation result for problems
with typical boundary layers (in this paper we do not study approximation properties anyway).
But it is clear that we do not get rid of the sub-optimality
in the estimate \eqref{upper} (the factor $\eps^{-1/4}$ in front of $\norm{\ub}{1/2,\cS}$)
by simply defining different trace norms.
\end{remark}

\begin{lemma}\label{lem:inj}
  Let $\uu\in \U$ with $b(\uu,\vv)=0$ for all $\vv=(\ttau,\mu,v)\in \V$. Then $\uu=0$.
\end{lemma}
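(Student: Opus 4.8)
The plan is to show that the conditions $b(\uu,\vv)=0$ for all $\vv=(\ttau,\mu,v)\in\V$ force each of the seven components of $\uu$ to vanish. First I would test with $\vv=(\ttau,0,0)$ for arbitrary $\ttau\in\HH(\div,\TT)$; this isolates the relation $\eps^{-\alpha}\vdual{\ssigma}{\ttau}+\vdual{u}{\pwdiv\ttau}-\dual{\ua}{\ttau\cdot\n}=0$. Choosing $\ttau$ with compact support in a single element $\el$ and smooth (so its normal trace on $\partial\el$ vanishes) gives, after integrating by parts back, that $\ssigma=\eps^\alpha\pwnabla u$ in each element in a weak sense, i.e.\ $u|_\el\in H^1(\el)$ and $\pwnabla u=\eps^{-\alpha}\ssigma\in\LL_2(\Omega)$; reinstating the boundary terms then identifies $\ua_\el$ with the trace $u|_{\partial\el}$ and moreover forces $u$ to have \emph{single-valued} traces across interelement boundaries, hence $u\in H^1(\Omega)$, and since $\ua\in H^{1/2}_{00}(\cS)$ comes from an $H^1_0(\Omega)$ function, $u\in H^1_0(\Omega)$. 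Analogously, testing with $\vv=(0,\mu,0)$ for $\mu\in H^1(\TT)$ and using the second equation yields $\rho=\pwdiv\ssigma$ elementwise in $\LL_2$, that $\ssigma\in\HH(\div,\Omega)$ with $\sigmaa_\el=\ssigma\cdot\n_\el|_{\partial\el}$, and $\div\ssigma=\rho\in L_2(\Omega)$.

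Next I would test with $\vv=(0,0,v)$ for $v\in H^1(\Delta,\TT)$ to exploit the third equation. With the regularity already obtained ($u\in H^1_0(\Omega)$, $\ssigma\in\HH(\div,\Omega)$), integrating by parts in the terms $\vdual{\ssigma}{\pwnabla v}$ and $\vdual{\rho}{\pwDelta v}$ and using the definitions $\ssigma=\eps^\alpha\nabla u$, $\rho=\div\ssigma$, one collapses the bracketed expression: the interelement jump terms in $v$ and in $\nabla v\cdot\n$ must cancel against the skeleton unknowns $\ub,\sigmab$, which pins down $\ub_\el=v$-type trace and $\sigmab_\el=(\eps^{1-\alpha}\ssigma+\eps^{\beta-\alpha}\ssigma)\cdot\n_\el$-type trace, while the volume part reduces to $\vdual{-\eps^{1-\alpha}\rho+u}{\,v-\eps^\beta\pwDelta v\,}=0$ for all such $v$. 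Since $v$ ranges over a large space (in particular over all $H^1_0(\Delta,\Omega)$ functions, for which $v-\eps^\beta\Delta v$ also ranges over a dense subset of $L_2(\Omega)$ — this is just solvability of $w-\eps^\beta\Delta w=g$ with homogeneous Dirichlet data), I conclude $-\eps^{1-\alpha}\rho+u=0$ in $L_2(\Omega)$. Combining with $\rho=\div\ssigma=\eps^\alpha\Delta u$ gives $-\eps\Delta u+u=0$ with $u\in H^1_0(\Omega)$; by uniqueness for the (coercive) reaction-diffusion problem $u=0$, whence $\ssigma=\eps^\alpha\nabla u=0$ and $\rho=\div\ssigma=0$.

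It remains to kill the skeleton unknowns. With $u=\ssigma=\rho=0$ in hand, the variational equations reduce to $\dual{\ua}{\ttau\cdot\n}=0$ for all $\ttau\in\HH(\div,\TT)$, $\dual{\sigmaa}{\mu}=0$ for all $\mu\in H^1(\TT)$, and $\eps^\beta\dual{\ub}{\pwnabla v\cdot\n}+\eps^{1-\alpha}\dual{\sigmab}{v}=0$ for all $v\in H^1(\Delta,\TT)$. Since the normal traces $\ttau\cdot\n$ on $\cS$ of $\HH(\div,\TT)$ functions are unconstrained (no continuity across elements is imposed on the broken space), and $H^{1/2}_{00}(\cS)$ is by construction the predual pairing for them, $\dual{\ua}{\ttau\cdot\n}=0$ for all such $\ttau$ forces $\ua=0$; the same duality argument with unconstrained traces $\mu|_{\partial\el}$ of $H^1(\TT)$ gives $\sigmaa=0$. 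For the last pair I would separately vary $v$ with $v|_{\partial\el}=0$ but $\pwnabla v\cdot\n$ arbitrary (a bubble-plus-gradient construction on each element) to obtain $\ub=0$, and then $v$ with $\pwnabla v\cdot\n|_{\partial\el}=0$ but $v|_{\partial\el}$ arbitrary to obtain $\sigmab=0$; here one uses that such $v$ exist because on each element $H^1(\Delta,\el)$ is rich enough to prescribe $v$ and $\nabla v\cdot\n$ on $\partial\el$ independently. I expect the main obstacle to be this last part — carefully justifying that within the \emph{broken} test space $\V$ one can realize, element by element, independent prescriptions of the various traces and normal derivatives so that each duality pairing can be isolated — together with the density argument that $v\mapsto v-\eps^\beta\Delta v$ has dense range in $L_2$ over $H^1_0(\Delta,\Omega)$. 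These are standard but must be stated with the right function-space lemmas (e.g.\ existence of local extensions in $H(\div,\el)$ and $H^1(\Delta,\el)$ with prescribed boundary data), and I would reference the analogous arguments in \cite{DemkowiczG_11_ADM}.
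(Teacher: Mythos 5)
Your argument is correct in substance and shares the overall architecture of the paper's proof (local testing gives $\pwnabla u=\eps^{-\alpha}\ssigma$, $\pwdiv\ssigma=\rho$, then the trace identifications $\ua_\el=u|_{\partial\el}$, $\sigmaa_\el=\ssigma\cdot\n_\el$ and the conformity $u\in H^1_0(\Delta,\Omega)$, $\ssigma\in\HH(\div,\Omega)$; then the field variables are killed with conforming test functions; finally $\ub,\sigmab$ are killed by a harmonic-function argument), but two sub-steps genuinely differ. For the field variables the paper simply inserts the single test function $v=u$ (admissible by the recovered regularity) into the third equation and reads off a sum of squares, whereas you test with all of $H^1_0(\Delta,\Omega)$, use surjectivity of $v\mapsto v-\eps^\beta\Delta v$ onto $L_2(\Omega)$ to recover $-\eps\Delta u+u=0$, and invoke coercivity; both work, the paper's being a one-liner. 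For $\ub,\sigmab$ the paper restricts to piecewise harmonic $v$, rewrites $\dual{\ub}{\pwnabla v\cdot\n}$ as $\dual{\nabla\ubext\cdot\n}{v}$ via the piecewise harmonic extension $\ubext$, deduces $\nabla\ubext\cdot\n_\el=-\eps^{1-\alpha-\beta}\sigmab_\el$ and concludes by a \emph{global} gluing ($\ubext$ harmonic on $\Omega$, zero on $\Gamma$); your element-local splitting avoids the gluing. Three repairs are needed in your write-up. First, you cannot integrate $\vdual{\rho}{\pwDelta v}$ by parts ($\rho$ is only in $L_2$), and the intermediate identification of $\sigmab_\el$ with $(\eps^{1-\alpha}+\eps^{\beta-\alpha})\ssigma\cdot\n_\el$ is neither correct nor needed: for conforming $v$ the identity $\vdual{u-\eps^{1-\alpha}\rho}{v-\eps^\beta\pwDelta v}=0$ follows by integrating by parts only the two $\vdual{\ssigma}{\pwnabla v}$ terms and regrouping, all skeleton pairings vanishing by conformity. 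Second, the Cauchy data $(v|_{\partial\el},\nabla v\cdot\n_\el)$ of $H^1(\Delta,\el)$ functions cannot be prescribed exactly and independently in $H^{1/2}(\partial\el)\times H^{-1/2}(\partial\el)$ (on a convex element the normal derivative of a function in $H^1_0(\el)\cap H^1(\Delta,\el)$ lies in $H^{1/2}(\partial\el)$), but density suffices: for $v\in H^1_0(\el)\cap H^1(\Delta,\el)$ one has $\dual{\ub_\el}{\nabla v\cdot\n_\el}_{\partial\el}=\vdual{\Delta v}{\ubext}_{\el}$ with $\ubext$ the harmonic extension of $\ub_\el$, and $\Delta v$ ranges over all of $L_2(\el)$, so $\ub_\el=0$; afterwards harmonic $v$ with arbitrary trace gives $\sigmab_\el=0$. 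Third, $\ua=0$ and $\sigmaa=0$ already follow from the trace identifications established in your first two steps, so the extra duality argument there is redundant (though harmless).
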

\begin{proof}
  Testing with functions from $C_0^\infty(\el)$ in~\eqref{b:1} and~\eqref{b:2} shows
  that $u\in H^1(\TT)$ and $\ssigma\in \HH(\div,\TT)$ with
  \begin{align}\label{lem:inj:1}
    \pwnabla u = \eps^{-\alpha}\ssigma \quad\text{ and }\quad \pwdiv\ssigma=\rho.
  \end{align}
  Integrating~\eqref{b:1} and~\eqref{b:2} by parts and using~\eqref{lem:inj:1} shows
  $u|_{\partial\el}=\ua_\el$ and $\ssigma\cdot\n|_{\partial\el} = \sigmaa|_{\partial\el}$
  for all $\el\in\TT$.
  Hence, $\ssigma\in \HH(\div,\Omega)$ and $u\in H^1(\Delta,\Omega)\cap H^1_0(\Omega)$.
  In particular, \eqref{lem:inj:1} then reads
  \begin{align}\label{lem:inj:2}
    \nabla u = \eps^{-\alpha}\ssigma \quad\text{ and }\quad \div\ssigma=\rho.
  \end{align}
  Since to $u\in H^1_0(\Omega)$ and $\ub=0$ on $\partial\Omega$,
  the definition of dualities on $\cS$ shows that
  $\dual{\sigmab}{u}=\dual{\ub}{\nabla u\cdot\n}=0$. Moreover, since $u\in H^1(\Delta,\Omega)$, we can
  choose $v=u$ in~\eqref{b:3}. Then, using identities~\eqref{lem:inj:2}, we obtain
  \begin{align*}
     \Bigl(\eps^{1-2\alpha}+\eps^{\beta-2\alpha}\Bigr) \vdual{\ssigma}{\ssigma}
     + \vdual{u}{u} + \eps^{1-2\alpha+\beta}\vdual{\rho}{\rho}
     &= 0,
  \end{align*}
  that is, $\ssigma$, $u$, and $\rho$ vanish. It remains to show that $\ub$ and $\sigmab$ vanish as well.
  Taking into account the results obtained so far we are left with the relation
  \begin{equation}\label{pf:inj:1}
     \eps^{1-\alpha}\dual{\sigmab}{v} + \eps^{\beta} \dual{\ub}{\pwnabla v\cdot\n} = 0
     \quad\forall v\in H^1(\Delta,\TT).
  \end{equation}
  Let $\ubext\in H^1_0(\Omega)$ be the extension of $\ub$ which is piecewise harmonic,
  i.e., on any $\el\in\TT$, $\ubext$ extends $\ub_\el$ harmonically onto $\el$.
  Then, integration by parts reveals that
  \[
     \dual{\ub_\el}{\nabla v\cdot\n_\el}_{\partial\el}
     =\dual{\nabla\ubext\cdot\n_\el}{v}_{\partial\el}
     \quad\forall v\in H^1(\Delta,\el)
     \quad\text{with}\quad \Delta v=0\ \text{on}\ \el.
  \]
  Therefore, \eqref{pf:inj:1} shows that
  \[
       \eps^{1-\alpha}\dual{\sigmab_\el}{v}_{\partial\el}
     + \eps^{\beta} \dual{\nabla\ubext\cdot\n_\el}{v}_{\partial\el}
     = 0
     \quad\forall v\in H^1(\el)\quad\text{with}\quad \Delta v=0\ \forall\el\in\TT.
  \]
  We conclude that $\nabla\ubext\cdot\n_\el=-\eps^{1-\alpha-\beta}\sigmab_\el$ on $\partial\el$
  for any $\el\in\TT$, i.e.,
  the normal derivatives of $\ubext$ across element boundaries do not jump
  (note that $\n_\el=-\n_{\el'}$ and $\sigmab_\el=-\sigmab_{\el'}$ on $\partial\el\cap\partial\el'$
  for neighboring elements $\el,\el'\in\TT$).
  Therefore, the piecewise harmonic function $\ubext$ is harmonic on $\Omega$.
  Since $\ubext=\ub=0$ on $\partial\Omega$ it follows that $\ubext=0$, $\ub=0$, and $\sigmab=0$.
\end{proof}
\subsection{Stability of the adjoint problem} \label{sec_stab}
In Section~\ref{sec_intro_dpg}, we have seen that a major part of DPG analysis deals with the stability of the adjoint problem.
As is standard in DPG theory (cf.~\cite{DemkowiczG_11_ADM,DemkowiczH_13_RDM}), this stability analysis
is split into several parts and combined by the superposition principle (or simply
the triangle inequality). In the following lemma we analyze the global inhomogeneous adjoint problem
for continuous functions, and Lemma~\ref{lem:adj:hom:aux} provides a technical stability result
for an intermediate homogeneous problem. Then, in Lemma~\ref{lem:adj:hom}, the homogeneous adjoint problem
with discontinuous functions is analyzed. All three results are combined in Corollary~\ref{cor}
and provide the remaining estimate used in the proof of Theorem~\ref{thm}.

\begin{lemma}\label{lem:adj:inhom}
  For $\alpha, \beta\in [0,1]$ with $\alpha+\beta\le 1$ and data
  $F,H\in L_2(\Omega)$, $\GG\in\LL_2(\Omega)$, there exists
  $(\ttau_1,\mu_1,v_1)\in \HH(\div,\Omega)\times H^1_0(\Omega)\times H^1_0(\Delta,\Omega)$ satisfying
  \begin{subequations}\label{lem:adj:inhom:eq}
    \begin{align}
      \div\ttau_1 + v_1 &= F\quad\text{ in } \Omega,\label{lem:adj:inhom:eq:1}\\
      \nabla\mu_1 + (\eps^{1-\alpha} + \eps^{\beta-\alpha})\nabla v_1 + \eps^{-\alpha}\ttau_1 &= \GG
      \quad\text{ in } \Omega,\label{lem:adj:inhom:eq:2}\\
      \eps^{1-\alpha+\beta}\Delta v_1 + \mu_1 &= H \quad\text{ in } \Omega,\label{lem:adj:inhom:eq:3}
    \end{align}
  \end{subequations}
  with
  \begin{align}\label{lem:adj:inhom:stab}
    \eps^{(1+\beta)/2}\norm{\Delta v_1}{} +
    \eps^{\beta/2}&\norm{\nabla v_1}{} + \norm{v_1}{} +
    \norm{\div\ttau_1}{}
    \lesssim \norm{F}{} + \eps^{\alpha-\beta/2}\norm{\GG}{}
    + \eps^{\alpha-(1+\beta)/2}\norm{H}{}
  \end{align}
  and
  \begin{align}\label{lem:adj:inhom:stab2}
    \eps^{-1/2}\norm{\mu_1}{} + \norm{\nabla\mu_1}{}
    + \eps^{-\alpha}\norm{\ttau_1}{}
    &\lesssim \eps^{\beta/2-\alpha}\norm{F}{} + \norm{\GG}{} +
    \eps^{-1/2}\norm{H}{}.
  \end{align}
\end{lemma}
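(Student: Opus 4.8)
The plan is to eliminate $\ttau_1$, reduce \eqref{lem:adj:inhom:eq} to a coupled second-order system for the pair $(v_1,\mu_1)\in H^1_0(\Omega)\times H^1_0(\Omega)$, establish well-posedness of that system, and then extract the two a priori bounds by testing with well-chosen multiples of $v_1$ and $\mu_1$. Concretely, I would solve \eqref{lem:adj:inhom:eq:2} for $\ttau_1=\eps^\alpha\GG-\eps^\alpha\nabla\mu_1-(\eps+\eps^\beta)\nabla v_1$, substitute into \eqref{lem:adj:inhom:eq:1}, and test by $w\in H^1_0(\Omega)$ to obtain
\[
  (\eps+\eps^\beta)\vdual{\nabla v_1}{\nabla w}+\eps^\alpha\vdual{\nabla\mu_1}{\nabla w}+\vdual{v_1}{w}=\vdual{F}{w}+\eps^\alpha\vdual{\GG}{\nabla w},
\]
while \eqref{lem:adj:inhom:eq:3} tested by $\psi\in H^1_0(\Omega)$ reads $-\eps^{1-\alpha+\beta}\vdual{\nabla v_1}{\nabla\psi}+\vdual{\mu_1}{\psi}=\vdual{H}{\psi}$. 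I take this coupled system as the definition of $(v_1,\mu_1)\in H^1_0(\Omega)^2$. Keeping $\mu_1$ as an independent $H^1_0$-unknown, rather than eliminating it via \eqref{lem:adj:inhom:eq:3}, is exactly what secures $\mu_1\in H^1_0(\Omega)$; then $\Delta v_1=\eps^{\alpha-1-\beta}(H-\mu_1)\in L_2(\Omega)$ follows from the second equation (so $v_1\in H^1_0(\Delta,\Omega)$), and $\ttau_1$ defined by the displayed formula lies in $\LL_2(\Omega)$ with $\div\ttau_1=F-v_1\in L_2(\Omega)$; one then checks directly that \eqref{lem:adj:inhom:eq:1}--\eqref{lem:adj:inhom:eq:3} hold. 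For well-posedness I would use a Galerkin argument: the square discrete system is injective by the energy identity below, hence uniquely solvable with $\eps$-uniformly bounded solutions, so a weak limit solves the continuous problem (uniqueness again by the energy identity). Alternatively, eliminating $v_1$ by the inverse Dirichlet Laplacian turns the system into a coercive problem for $\mu_1$ in $H^1_0(\Omega)$, to which Lax--Milgram applies.

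For \eqref{lem:adj:inhom:stab} I would test the first equation by $w=v_1$ and the second by $\psi=\eps^{2\alpha-1-\beta}\mu_1$ and add; the gradient cross-terms cancel since $\eps^\alpha=\eps^{2\alpha-1-\beta}\eps^{1-\alpha+\beta}$, leaving
\[
  (\eps+\eps^\beta)\norm{\nabla v_1}{}^2+\norm{v_1}{}^2+\eps^{2\alpha-1-\beta}\norm{\mu_1}{}^2=\vdual{F}{v_1}+\eps^\alpha\vdual{\GG}{\nabla v_1}+\eps^{2\alpha-1-\beta}\vdual{H}{\mu_1}.
\]
Cauchy--Schwarz and weighted Young inequalities (using $\eps+\eps^\beta\ge\eps^\beta$) then give $\norm{v_1}{}+\eps^{\beta/2}\norm{\nabla v_1}{}+\eps^{\alpha-(1+\beta)/2}\norm{\mu_1}{}\lesssim R$, with $R:=\norm{F}{}+\eps^{\alpha-\beta/2}\norm{\GG}{}+\eps^{\alpha-(1+\beta)/2}\norm{H}{}$ the right-hand side of \eqref{lem:adj:inhom:stab}. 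Using $\Delta v_1=\eps^{\alpha-1-\beta}(H-\mu_1)$ one obtains $\eps^{(1+\beta)/2}\norm{\Delta v_1}{}=\eps^{\alpha-(1+\beta)/2}\norm{H-\mu_1}{}\lesssim R$ and $\norm{\div\ttau_1}{}=\norm{F-v_1}{}\lesssim R$, which is \eqref{lem:adj:inhom:stab}. The $\mu_1$-term also furnishes $\eps^{-1/2}\norm{\mu_1}{}=\eps^{\beta/2-\alpha}\cdot\eps^{\alpha-(1+\beta)/2}\norm{\mu_1}{}\lesssim\eps^{\beta/2-\alpha}R$, one of the summands needed for \eqref{lem:adj:inhom:stab2}.

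For the remaining terms of \eqref{lem:adj:inhom:stab2} I would test the first equation instead by $w=\mu_1$, giving $\eps^\alpha\norm{\nabla\mu_1}{}^2=\vdual{F}{\mu_1}+\eps^\alpha\vdual{\GG}{\nabla\mu_1}-\vdual{v_1}{\mu_1}-(\eps+\eps^\beta)\vdual{\nabla v_1}{\nabla\mu_1}$; absorbing the two $\norm{\nabla\mu_1}{}$-terms and inserting the estimates from the previous step (with $\eps+\eps^\beta\le2\eps^\beta$) should yield $\norm{\nabla\mu_1}{}\lesssim\eps^{\beta/2-\alpha}\norm{F}{}+\norm{\GG}{}+\eps^{-1/2}\norm{H}{}$, after which $\eps^{-\alpha}\norm{\ttau_1}{}=\norm{\GG-\nabla\mu_1-(\eps^{1-\alpha}+\eps^{\beta-\alpha})\nabla v_1}{}$ is controlled by these quantities together with $\eps^{\beta-\alpha}\norm{\nabla v_1}{}=\eps^{\beta/2-\alpha}\cdot\eps^{\beta/2}\norm{\nabla v_1}{}\lesssim\eps^{\beta/2-\alpha}R$. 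The reformulation and the existence step are essentially routine; the delicate point is the bookkeeping of the powers of $\eps$ --- spotting the weight $\eps^{2\alpha-1-\beta}$ that makes the cross-term vanish in the energy identity, and verifying, using only $\alpha+\beta\le1$ and $\beta\le1$, that every term produced when estimating $\norm{\nabla\mu_1}{}$ is dominated by $\eps^{\beta/2-\alpha}\norm{F}{}+\norm{\GG}{}+\eps^{-1/2}\norm{H}{}$. I expect that exponent-by-exponent check to be where most of the care is required.
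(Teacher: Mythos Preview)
Your argument is correct and takes a genuinely different route from the paper's. The paper does not form the coupled system in $H^1_0(\Omega)^2$. Instead it \emph{first} defines $v_1\in H^1_0(\Delta,\Omega)$ as the Lax--Milgram solution of the fourth-order variational problem
\[
  \eps^{1+\beta}\vdual{\Delta v}{\Delta w}+(\eps+\eps^\beta)\vdual{\nabla v}{\nabla w}+\vdual{v}{w}
  =\vdual{F}{w}+\eps^\alpha\vdual{\GG}{\nabla w}+\eps^\alpha\vdual{H}{\Delta w}
\]
for all $w\in H^1_0(\Delta,\Omega)$, which immediately yields the bound \eqref{lem:adj:inhom:stab} for $v_1$. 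It then sets $\mu_1:=H-\eps^{1-\alpha+\beta}\Delta v_1\in L_2(\Omega)$ and proves $\mu_1\in H^1_0(\Omega)$ by a duality/density argument: one shows that $\vdual{\mu_1}{(-\Delta+\eps^{-1})\varphi}$ is bounded for $\varphi\in C_0^\infty(\Omega)$ in the $(\eps^{-1}\norm{\cdot}{}^2+\norm{\nabla\cdot}{}^2)^{1/2}$-norm, and uses that $(-\Delta+\eps^{-1})(C_0^\infty(\Omega))$ is dense in the dual of $(H^1_0(\Omega),\enorm{\cdot}_B)$. Finally $\ttau_1$ is read off from \eqref{lem:adj:inhom:eq:2} and shown to satisfy \eqref{lem:adj:inhom:eq:1} by testing.

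The two approaches trade difficulties. The paper gets existence in one stroke via Lax--Milgram but then needs the somewhat indirect duality argument to place $\mu_1$ in $H^1_0(\Omega)$ with the right bound. Your route avoids the fourth-order space and the duality step entirely: $\mu_1\in H^1_0(\Omega)$ is built in, and the bound on $\norm{\nabla\mu_1}{}$ comes from the elementary test $w=\mu_1$. The price is that well-posedness of the coupled saddle-type system is not a direct Lax--Milgram application; your Galerkin/compactness argument (injectivity from the energy identity, uniform $H^1_0\times H^1_0$ bounds from the two tests, weak limits) is the right way to close it. Your key observation---choosing the weight $\eps^{2\alpha-1-\beta}$ on the second equation so that the cross-term $\eps^\alpha\vdual{\nabla\mu_1}{\nabla v_1}$ cancels---is exactly what replaces the paper's fourth-order coercivity, and the subsequent exponent bookkeeping (using only $\beta\le 1$ and $\eps+\eps^\beta\le 2\eps^\beta$) checks out.
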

\begin{proof}
  We construct a solution $(\ttau_1,\mu_1,v_1)$ of \eqref{lem:adj:inhom:eq} by first
  defining $v_1\in H^1_0(\Delta,\Omega)$ as the solution to a variational problem. We then proceed to
  select $\mu_1$ by \eqref{lem:adj:inhom:eq:3}, deduce that $\mu_1\in H^1_0(\Omega)$,
  define $\ttau_1$ by \eqref{lem:adj:inhom:eq:2}, prove that $\ttau_1\in\HH(\div,\Omega)$,
  and eventually show that $v_1$ and $\ttau_1$ satisfy \eqref{lem:adj:inhom:eq:1}.

  Now, the variational definition of $v_1$ is to be the solution of
  \begin{align}\label{lem:adj:inhom:b1}
    \text{find } v\in H^1_0(\Delta,\Omega) \text{ such that }
    b_A(v,w) = \ell_A(w) \text{ for all } w\in H^1_0(\Delta,\Omega),
  \end{align}
  where
  \begin{align*}
    b_A(v,w) &:= \eps^{1+\beta}\vdual{\Delta v}{\Delta w}
    + (\eps+\eps^{\beta})\vdual{\nabla v}{\nabla w}
    + \vdual{v}{w},\\
    \ell_A(w) &:=
    \vdual{F}{w} + \eps^\alpha\vdual{\GG}{\nabla w} + \eps^\alpha\vdual{H}{\Delta w}.
  \end{align*}
  If we equip $H^1_0(\Delta,\Omega)$ with the norm
  $\enorm{\cdot}_A^2 := \eps^{1+\beta}\norm{\Delta \cdot}{}^2
    + (\eps+\eps^{\beta})\norm{\nabla \cdot}{}^2
    + \norm{\cdot}{}^2$,
  then $b_A$ is continuous and elliptic with both constants being $1$,
  and $\ell_A$ is continuous with bound
  $\bigl(\norm{F}{}^2 + \eps^{2\alpha}(\eps+\eps^\beta)^{-1}\norm{\GG}{}^2
  + \eps^{2\alpha-1-\beta}\norm{H}{}^2\bigr)^{1/2}$. With the Lax-Milgram lemma we conclude
  that~\eqref{lem:adj:inhom:b1} has a unique solution $v_1\in H^1_0(\Delta,\Omega)$ with
  \begin{align}\label{lem:adj:inhom:v}
    \enorm{v_1}_A \lesssim \norm{F}{} + \eps^{\alpha-\beta/2}\norm{\GG}{}
    + \eps^{\alpha-(1+\beta)/2}\norm{H}{}.
  \end{align}
  We define $\mu_1\in L_2(\Omega)$ by equation~\eqref{lem:adj:inhom:eq:3} and conclude that
  \begin{align}\label{lem:adj:inhom:mu-L2}
    \eps^{-1/2}\norm{\mu_1}{}
    \lesssim
    \eps^{\beta/2-\alpha}\norm{F}{} + \norm{\GG}{} +
    \eps^{-1/2}\norm{H}{}.
  \end{align}
  We continue to show that indeed $\mu_1\in H^1_0(\Omega)$ with the desired $H^1(\Omega)$ bound.
  To this end define the norm 
  $\enorm{\cdot}_B^2:=\eps^{-1}\norm{\cdot}{}^2 + \norm{\nabla\cdot}{}^2$.
  By definition of $\mu_1$ and $v_1$ we have for all $\varphi\in C_0^\infty(\Omega)$ the identity
  \begin{align*}
    \lefteqn{
    \vdual{\mu_1}{(\Delta - \eps^{-1})\varphi} = \vdual{H}{\Delta\varphi}
    - \eps^{1-\alpha+\beta}\vdual{\Delta v_1}{\Delta\varphi} - \eps^{-1}\vdual{\mu_1}{\varphi}}
    \\
    &=-\eps^{-\alpha}\vdual{F}{\varphi} - \vdual{\GG}{\nabla\varphi}
    + (\eps^{1-\alpha} + \eps^{\beta-\alpha}) \vdual{\nabla v_1}{\nabla \varphi}
    + \eps^{-\alpha}\vdual{v_1}{\varphi} - \eps^{-1}\vdual{\mu_1}{\varphi},
  \end{align*}
  such that, using~\eqref{lem:adj:inhom:v} and~\eqref{lem:adj:inhom:mu-L2}, we arrive at
  \begin{align}\label{lem:adj:inhom:mu-H1}
    \sup_{\varphi\in C_0^\infty(\Omega)}
    \frac{\abs{\vdual{\mu_1}{(-\Delta+\eps^{-1})\varphi}}}{\enorm{\varphi}_B}
    &\lesssim
    \eps^{\beta/2-\alpha}\norm{F}{} + \norm{\GG}{} + \eps^{-1/2}\norm{H}{}.
  \end{align}
  The Lax-Milgram lemma shows that the operator $-\Delta + \eps^{-1}$ is an isomorphism from the Hilbert space
  $\cH := \left(H^1_0(\Omega), \enorm{\cdot}_B\right)$ to its dual $\cH'$ and that the continuity constants
  of $-\Delta + \eps^{-1}$ and its inverse do not depend on $\eps$. Furthermore, as $C_0^\infty(\Omega)$
  is dense in $\cH$, we conclude that $(-\Delta+\eps^{-1})(C_0^\infty(\Omega))$ is dense in $\cH'$.
  Hence, with~\eqref{lem:adj:inhom:mu-H1},
  \begin{align*}
    \enorm{\mu_1}_B = \sup_{g \in \cH'} \frac{\abs{g(\mu_1)}}{\norm{g}{\cH'}}
    &\simeq
    \sup_{\varphi\in C_0^\infty(\Omega)}
    \frac{\abs{\vdual{\mu_1}{(-\Delta+\eps^{-1})\varphi}}}{\enorm{\varphi}_B}
    \lesssim
    \eps^{\beta/2-\alpha}\norm{F}{} + \norm{\GG}{} + \eps^{-1/2}\norm{H}{}.
  \end{align*}
  This shows that $\mu_1\in H^1_0(\Omega)$ with the desired bound.
  Finally, we define $\ttau_1 \in \LL_2(\Omega)$ by equation~\eqref{lem:adj:inhom:eq:2}.

  We now show that $\ttau_1\in\HH(\div,\Omega)$, and that $\ttau_1$ and $v_1$
  satisfy \eqref{lem:adj:inhom:eq:1}.
  Let $\varphi\in C^\infty_0(\Omega)$ be given. We test \eqref{lem:adj:inhom:eq:2} with $\eps^\alpha\nabla\varphi$,
  \eqref{lem:adj:inhom:eq:3} with $\eps^\alpha\Delta\varphi$, and integrate by parts the latter equation.
  Summation of both equations yields
  \begin{align*}
      \eps^{1+\beta}\vdual{\Delta v_1}{\Delta\varphi}
    + (\eps+\eps^{\beta})\vdual{\nabla v_1}{\nabla\varphi}
    + \vdual{\ttau_1}{\nabla\varphi}
    &=
      \eps^\alpha \vdual{\GG}{\nabla\varphi}
    + \eps^\alpha \vdual{H}{\Delta\varphi}.
  \end{align*}
  Taking into account the variational definition \eqref{lem:adj:inhom:b1} of $v_1$, this relation reduces to
  \begin{align*}
     \vdual{\ttau_1}{\nabla\varphi} - \vdual{v_1}{\varphi} &= -\vdual{F}{\varphi}.
  \end{align*}
  We conclude that $\ttau_1\in \HH(\div,\Omega)$, and that $\ttau_1$, $v_1$ satisfy \eqref{lem:adj:inhom:eq:1}.
  The bounds for $\ttau_1$ follow from relations~\eqref{lem:adj:inhom:eq:1},~\eqref{lem:adj:inhom:eq:2}
  and the previous bounds for $v_1$ and $\mu_1$.
\end{proof}
\begin{remark} \label{rem_alpha_beta}
Our aim is to control the unknown functions $u$ and $\ssigma$ in $L_2$ in a robust way. That is,
principal objective is to bound these parts of the $\U$-norm by the energy norm with a constant
that is independent of $\eps$. By DPG-theory, this bound is equivalent to the uniform stability of
the adjoint problem \eqref{lem:adj:inhom:eq} with right-hand side functions $F$ and $\GG$ taken in $L_2$.
In fact, the robust control is down to the constant $C_{\rm infsup}$ in \eqref{norm_E}
which comes from the inf-sup property \eqref{eq:bb:2}. This latter property is equivalent to a
robust bound \eqref{normeq}, the stability of the adjoint problem.

According to the upper bounds in \eqref{lem:adj:inhom:stab} and \eqref{lem:adj:inhom:stab2}, this is only
achievable if $\alpha-\beta/2=0$. Furthermore, the lower bounds in these estimates control the test norm
in $\V$ only if $\alpha\le 1/4$. From the point of view of boundedness of the bilinear form $b(\cdot,\cdot)$
(cf.~Lemma~\ref{lem:b:stab}) we want to select $\alpha$ and $\beta$ as large as possible. Therefore,
the natural selection is $\alpha=1/4$, $\beta=1/2$ and our method provides robust control of the variables
$u$, $\ssigma=\eps^{1/4}\nabla u$, and $\eps^\beta\rho=\eps^{3/4}\Delta u$,
cf.~\eqref{norm_U} for the weighting of $\rho$.
According to \cite{LinS_12_BFE}, in the presence of boundary layers,
precisely these $\eps$-weightings guarantee that the three unknowns
have comparable $L_2$-norms when $\eps\to 0$. Our DPG analysis with robustness as
objective leads to the very weightings without any approximation theory for specific solutions.
\end{remark}
\begin{lemma}\label{lem:adj:hom:aux}
  Suppose that $(\llambda,w)\in \HH(\div,\TT)\times H^1(\TT)$ satisfy
  \begin{subequations}\label{lem:adj:hom:aux:eq:1}
    \begin{align}
      \llambda + \nabla w &= 0\label{lem:adj:hom:aux:eq:1a},\\
      \div \llambda + \eps^{-\beta}w &=0\label{lem:adj:hom:aux:eq:1b}
    \end{align}
  \end{subequations}
  on any $\el\in\TT$. Then it holds that
  \begin{align*}
    \norm{\nabla w}{} = \norm{\llambda}{}
    &\lesssim
    \eps^{-\beta}\norm{w}{-1} + \norm{\jump{\llambda\cdot\n}}{-1/2,\cS'}
                                     + \norm{\jump{w}}{1/2,\cS'}
  \end{align*}
  and
  \begin{align*}
    \norm{w}{} = \eps^{\beta} \norm{\pwDelta w}{}
    &\lesssim
      (\eps^\beta+\eps^{\alpha+\beta/2})\norm{\jump{\llambda\cdot\n}}{-1/2,\cS'}
    + \eps^{\beta/2} \norm{\jump{w}}{1/2,\cS'}.
  \end{align*}
\end{lemma}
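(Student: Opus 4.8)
The plan is to exploit the two pointwise consequences of \eqref{lem:adj:hom:aux:eq:1a}--\eqref{lem:adj:hom:aux:eq:1b}: on every $\el\in\TT$ they give $\llambda|_\el=-\nabla(w|_\el)$ and, after substitution, $\Delta(w|_\el)=\eps^{-\beta}(w|_\el)$; in particular $w|_\el\in H^1(\Delta,\el)$, $\norm{\llambda}{}=\norm{\pwnabla w}{}$, and $\norm{w}{}=\eps^{\beta}\norm{\pwDelta w}{}$, so only the two $\lesssim$-estimates have to be shown. Both will be obtained by duality: I test $\pwnabla w$ (respectively $w$) against an arbitrary $\GG\in\LL_2(\Omega)$ (respectively $g\in L_2(\Omega)$), split off a globally conforming auxiliary function built from the datum, integrate by parts element by element, cancel all volume contributions by means of $\pwDelta w=\eps^{-\beta}w$, and bound the surviving skeleton integrals by $\norm{\jump{\llambda\cdot\n}}{-1/2,\cS'}$ and $\norm{\jump{w}}{1/2,\cS'}$ through the defining suprema of these norms, using the extending fields as competitors in the infima \eqref{norm_ua}, \eqref{norm_sigmaa}.

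For the first estimate I would use the Helmholtz-type splitting $\GG=\nabla\phi+\zz$, where $\phi\in H^1_0(\Omega)$ is the Riesz representative of $\chi\mapsto\vdual{\GG}{\nabla\chi}$ and $\zz:=\GG-\nabla\phi$; then $\div\zz=0$, $\zz\in\HH(\div,\Omega)$ and $\norm{\nabla\phi}{}^2+\norm{\zz}{}^2=\norm{\GG}{}^2$. Elementwise integration by parts, together with $\pwnabla w\cdot\n=-\llambda\cdot\n$, $\pwDelta w=\eps^{-\beta}w$ and $\div\zz=0$, yields
\[
  \vdual{\pwnabla w}{\GG}
  = -\eps^{-\beta}\vdual{w}{\phi}-\dual{\llambda\cdot\n}{\phi|_\cS}
    +\sum_{\el\in\TT}\dual{w}{\zz\cdot\n_\el}_{\partial\el}.
\]
The three terms are bounded, respectively, by $\eps^{-\beta}\norm{w}{-1}\norm{\nabla\phi}{}$, by $\norm{\jump{\llambda\cdot\n}}{-1/2,\cS'}\,(\norm{\phi}{}^2+\eps^{2\alpha}\norm{\nabla\phi}{}^2)^{1/2}$, and by $\norm{\jump{w}}{1/2,\cS'}\,(\norm{\zz}{}^2+\eps^{2\beta}\norm{\div\zz}{}^2)^{1/2}=\norm{\jump{w}}{1/2,\cS'}\norm{\zz}{}$. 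Since $\eps\le1$ and $\norm{\phi}{}\lesssim\norm{\nabla\phi}{}$ by Friedrichs, each accompanying factor is $\lesssim\norm{\GG}{}$; dividing by $\norm{\GG}{}$ and taking the supremum over $\GG$ gives the first inequality.

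For the second estimate I would instead take $\phi\in H^1_0(\Omega)$ solving the screened Poisson problem $\eps^{\beta}\vdual{\nabla\phi}{\nabla\chi}+\vdual{\phi}{\chi}=\vdual{g}{\chi}$ for all $\chi\in H^1_0(\Omega)$, i.e.\ $-\eps^{\beta}\Delta\phi+\phi=g$ with $\nabla\phi\in\HH(\div,\Omega)$; testing with $\chi=\phi$ and reading $\Delta\phi$ off the equation yields $\norm{\phi}{}\le\norm{g}{}$, $\norm{\nabla\phi}{}\le\eps^{-\beta/2}\norm{g}{}$ and $\eps^{\beta}\norm{\Delta\phi}{}\lesssim\norm{g}{}$. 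Writing $\vdual{w}{g}=\vdual{w}{-\eps^{\beta}\Delta\phi+\phi}$, applying Green's second identity on each element and invoking $\pwDelta w=\eps^{-\beta}w$ makes both volume contributions cancel, leaving
\[
  \vdual{w}{g}
  = -\eps^{\beta}\dual{\llambda\cdot\n}{\phi|_\cS}
    -\eps^{\beta}\sum_{\el\in\TT}\dual{w}{\nabla\phi\cdot\n_\el}_{\partial\el}.
\]
Bounding as before, the first term is $\le\eps^{\beta}\norm{\jump{\llambda\cdot\n}}{-1/2,\cS'}(\norm{\phi}{}^2+\eps^{2\alpha}\norm{\nabla\phi}{}^2)^{1/2}\lesssim(\eps^{\beta}+\eps^{\alpha+\beta/2})\norm{\jump{\llambda\cdot\n}}{-1/2,\cS'}\norm{g}{}$ and the second is $\le\eps^{\beta}\norm{\jump{w}}{1/2,\cS'}(\norm{\nabla\phi}{}^2+\eps^{2\beta}\norm{\Delta\phi}{}^2)^{1/2}\lesssim\eps^{\beta/2}\norm{\jump{w}}{1/2,\cS'}\norm{g}{}$, where $\eps\le1$ is used so that the $\eps^{-\beta}\norm{g}{}^2$ contribution dominates. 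Dividing by $\norm{g}{}$ and taking the supremum completes the proof.

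The elementwise integrations by parts and the bookkeeping of powers of $\eps$ are routine. The point that needs care — and what I expect to be the main obstacle — is the coordinated choice of the two auxiliary problems: a \emph{pure} Poisson problem for the gradient bound but a \emph{screened} Poisson problem with precisely the weight $\eps^{\beta}$ for the $L_2$ bound, chosen so that after integration by parts every volume term is exactly annihilated by the identity $\pwDelta w=\eps^{-\beta}w$, leaving only skeleton integrals in the exact dual form of $\norm{\jump{\llambda\cdot\n}}{-1/2,\cS'}$ and $\norm{\jump{w}}{1/2,\cS'}$; a mismatched weight would leave an uncontrolled volume remainder or the wrong power of $\eps$ in front of the jump norms.
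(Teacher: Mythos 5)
Your proof is correct and follows essentially the same route as the paper: a conforming auxiliary function (an $H^1_0$ potential plus a divergence-free remainder for the gradient bound, and the screened Poisson problem $-\eps^{\beta}\Delta\phi+\phi=g$ for the $L_2$ bound), elementwise integration by parts using $\pwDelta w=\eps^{-\beta}w$, and the defining infima of the trace norms to absorb the skeleton terms. The only cosmetic differences are that you phrase both estimates as dualities against arbitrary data $\GG$, $g$ where the paper directly takes $\GG=\llambda$ and $g=-w$, and that your orthogonal splitting of $\GG$ avoids the explicit curl potential (and hence the separate two- and three-dimensional cases) of the paper's Helmholtz decomposition.
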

\begin{proof}
  We follow the ideas used in \cite[Lemma 4.4]{DemkowiczG_11_ADM} but have to consider the parameter $\eps$.
  In three dimensions we use the Helmholtz decomposition $\llambda = \nabla\psi + \ccurl\zz$
  with $\psi\in H^1_0(\Omega)$ and $\zz\in \HH(\ccurl,\Omega)$.
  It follows that
  $\norm{\nabla\psi}{}^2+\norm{\ccurl\zz}{}^2\le\norm{\llambda}{}^2$
  and, by the Poincar\'e-Friedrichs inequality, $\norm{\psi}{}\lesssim \norm{\llambda}{}$.
  By definition of the trace norms we also bound
  \begin{align*}
      &\norm{\psi}{1/2,\cS}
      \lesssim
      (1 + \eps^\alpha)\norm{\llambda}{} \simeq \norm{\llambda}{},
      \qquad
      \norm{\n\cdot\ccurl\zz}{-1/2,\cS} \le \norm{\ccurl\zz}{} \le \norm{\llambda}{}.
  \end{align*}
  Piecewise integration by parts and~\eqref{lem:adj:hom:aux:eq:1} yield
  \begin{align*}
    \vdual{\llambda}{\llambda} =
    \eps^{-\beta} \vdual{w}{\psi} + \dual{\llambda\cdot\n}{\psi} - \dual{w}{\n\cdot\ccurl\zz}
  \end{align*}
  so that the previous bounds and dualities prove the first assertion.

  Now define $\psi\in H^1_0(\Omega)$ to be the weak solution of $-\eps^\beta \Delta\psi + \psi=-w$ such that
  $\norm{\psi}{} + \eps^{\beta/2}\norm{\nabla\psi}{}
  + \eps^\beta\norm{\Delta\psi}{} \lesssim \norm{w}{}$.
  By definition of the trace norms it holds that
  \begin{align*}
    \begin{split}
      \norm{\psi}{1/2,\cS} &\lesssim (1 + \eps^{\alpha-\beta/2})\norm{w}{},\\
      \norm{\nabla\psi\cdot\n}{-1/2,\cS} &\lesssim (\eps^{-\beta/2} + 1)\norm{w}{}
                                   \simeq \eps^{-\beta/2}\norm{w}{}.
    \end{split}
  \end{align*}
  Piecewise integrating by parts twice, and using \eqref{lem:adj:hom:aux:eq:1}, we obtain
  \begin{align*}
    \vdual{w}{w} &= -\eps^\beta\vdual{\pwnabla w}{\nabla \psi} - \vdual{w}{\psi}
    +\eps^\beta\dual{\nabla\psi\cdot\n}{w}
    = \eps^\beta\dual{\llambda\cdot\n}{\psi} + \eps^\beta\dual{\nabla\psi\cdot\n}{w}.
  \end{align*}
  The previous estimates for the trace norms of $\psi$ show the second assertion.

  In two dimensions one uses the Helmholtz decomposition $\llambda=\nabla\psi + (-\partial_2z,\partial_1z)$
  with scalar potential $z\in H^1(\Omega)$. Then the assertions follow as before.
\end{proof}
\begin{lemma}\label{lem:adj:hom}
  Let $\beta=2\alpha=1/2$ and
  $(\ttau_0,\mu_0,v_0)\in \HH(\div,\TT)\times H^1(\TT)\times H^1(\Delta,\TT)$ be a solution of
  \begin{subequations}\label{lem:adj:hom:eq:1}
    \begin{align}
      \pwdiv\ttau_0 + v_0 &= 0,\label{lem:adj:hom:eq:1a}\\
      \pwnabla\mu_0 + (\eps^{1-\alpha} + \eps^{\beta-\alpha})\pwnabla v_0 + \eps^{-\alpha}\ttau_0 &= 0,
      \label{lem:adj:hom:eq:1b}\\
      \eps^{1-\alpha+\beta}\pwDelta v_0 + \mu_0 &= 0\label{lem:adj:hom:eq:1c}
    \end{align}
  \end{subequations}
  on any $\Omega$. Then, with
  \[\begin{split}
     &\norm{\jump{\ttau_0,\mu_0,v_0}}{} :=\\
     &\qquad
      \norm{\jump{\pwnabla v_0\cdot\n}}{-1/2,\cS'}
    + \eps^{-3/4}\norm{\jump{\ttau_0\cdot\n}}{-1/2,\cS'}
    + \eps^{-3/4}\norm{\jump{\mu_0}}{1/2,\cS'}
    + \eps^{-1/2}\norm{\jump{v_0}}{1/2,\cS'}
  \end{split}\]
  it holds that
  \begin{align*}
      \norm{v_0}{}
    + \norm{\pwdiv\ttau_0}{}
    &\lesssim
    \eps^{3/4} \norm{\jump{\ttau_0,\mu_0,v_0}}{}
    \\
      \eps^{3/4}\norm{\pwDelta v_0}{}
    + \eps^{-1/4} \norm{\ttau_0}{}
    + \eps^{-1/2}\norm{\mu_0}{}
    &\lesssim
    \eps^{1/2} \norm{\jump{\ttau_0,\mu_0,v_0}}{}
    \\
      \eps^{1/4}\norm{\pwnabla v_0}{}
    + \norm{\pwnabla\mu_0}{}
    &\lesssim
    \hspace{1.6em}\norm{\jump{\ttau_0,\mu_0,v_0}}{}.
  \end{align*}
  In particular, taking the largest upper bound, we have the estimate
  \begin{align*}
     \norm{(\ttau_0,\mu_0,v_0)}{\V} &\lesssim \norm{\jump{\ttau_0,\mu_0,v_0}}{}.
  \end{align*}
\end{lemma}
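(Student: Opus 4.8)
The plan is to bound every summand of $\norm{(\ttau_0,\mu_0,v_0)}{\V}^2$ by a suitably scaled version of the jump quantity $\norm{\jump{\ttau_0,\mu_0,v_0}}{}$, working up the differential order exactly as the three displayed chains suggest: first the zeroth-order quantities $v_0$ and $\pwdiv\ttau_0$ (which coincide up to sign by \eqref{lem:adj:hom:eq:1a}), then $\pwDelta v_0$, $\ttau_0$ and $\mu_0$ (linked by \eqref{lem:adj:hom:eq:1c}), and finally $\pwnabla v_0$ and $\pwnabla\mu_0$. Summing the squared estimates and using $0<\eps\le 1$ to retain only the largest prefactor then gives $\norm{(\ttau_0,\mu_0,v_0)}{\V}\lesssim\norm{\jump{\ttau_0,\mu_0,v_0}}{}$.

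First I would record the structure hidden in \eqref{lem:adj:hom:eq:1}. Equation \eqref{lem:adj:hom:eq:1b} exhibits $\ttau_0$ as a piecewise gradient, $\ttau_0=-\pwnabla\phi$ with $\phi:=\eps^{\alpha}\mu_0+(\eps+\eps^{\beta})v_0$, whence \eqref{lem:adj:hom:eq:1a} becomes $\pwDelta\phi=v_0$ on each element and $\phi\in H^1(\Delta,\TT)$; combining \eqref{lem:adj:hom:eq:1a}--\eqref{lem:adj:hom:eq:1c} one also finds $\mu_0=-\eps^{1-\alpha+\beta}\pwDelta v_0\in H^1(\Delta,\TT)$ and $\mu_0\in H^1(\TT)$. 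These regularities legitimize the piecewise integrations by parts used below. Next, with $\llambda:=\eps^{-\alpha}\ttau_0$ and $w:=\mu_0+(\eps^{1-\alpha}+\eps^{\beta-\alpha})v_0=\eps^{-\alpha}\phi$, equation \eqref{lem:adj:hom:eq:1b} reads $\llambda+\pwnabla w=0$, which is precisely the first equation of the system treated in Lemma~\ref{lem:adj:hom:aux}; its second equation is reproduced up to the lower-order terms in $v_0,\mu_0$ produced by \eqref{lem:adj:hom:eq:1a}--\eqref{lem:adj:hom:eq:1c}. I would therefore invoke Lemma~\ref{lem:adj:hom:aux} (with $\beta=1/2$) for this diffusion--reaction core and absorb the residual couplings between $v_0$ and $\mu_0$ through a separate energy estimate based on \eqref{lem:adj:hom:eq:1a}, bootstrapping from the already-established lower-order bounds.

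The estimates themselves follow the duality technique of \cite[Lemma 4.4]{DemkowiczG_11_ADM} already used in the proof of Lemma~\ref{lem:adj:hom:aux}: to bound a given weighted $L_2$-norm of a component I would introduce a globally conforming auxiliary function --- a Helmholtz decomposition $\ttau_0=\nabla\psi+\ccurl\zz$ with $\psi\in H^1_0(\Omega)$, resp.\ the solution $\psi\in H^1_0(\Omega)$ of an $\eps$-scaled reaction--diffusion problem $-\eps^{\beta}\Delta\psi+\psi=(\text{data built from }v_0,\mu_0)$ --- insert it into \eqref{lem:adj:hom:eq:1} and integrate by parts piecewise. Since these auxiliary functions lie in $H^1_0(\Omega)$ or $\HH(\div,\Omega)$, all arising skeleton contributions collapse to dualities of the jumps $\jump{\ttau_0\cdot\n}$, $\jump{\pwnabla v_0\cdot\n}$, $\jump{\mu_0}$, $\jump{v_0}$ against traces of the auxiliary functions, hence are controlled by the dual norms $\norm{\cdot}{\pm1/2,\cS'}$ times the trace norms $\norm{\cdot}{\pm1/2,\cS}$ of the liftings, exactly as in Lemma~\ref{lem:adj:hom:aux}. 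A Young inequality then closes each of the three chains, and the last, cumulative estimate for $\norm{(\ttau_0,\mu_0,v_0)}{\V}$ follows by summation.

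The step I expect to be the main obstacle is the $\eps$-bookkeeping. The auxiliary liftings carry negative powers of $\eps$ in their trace norms (e.g.\ $\norm{\nabla\psi\cdot\n}{-1/2,\cS}\simeq\eps^{-\beta/2}\norm{w}{}$ and $\norm{\psi}{1/2,\cS}\lesssim(1+\eps^{\alpha-\beta/2})\norm{w}{}$, as computed in the proof of Lemma~\ref{lem:adj:hom:aux}), and these powers must be matched against the scalings hard-wired into $\norm{\jump{\ttau_0,\mu_0,v_0}}{}$ --- chosen precisely so that the subsequent Corollary~\ref{cor} yields the balanced norm of Theorem~\ref{thm} at the cost of the $\eps^{-1/4}$-suboptimality in \eqref{upper} --- in such a way that no power of $\eps^{-1}$ survives in any constant. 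This is exactly where the hypothesis $\beta=2\alpha=1/2$ (cf.\ Remark~\ref{rem_alpha_beta}) enters; for any other admissible pair the bootstrap would leave an $\eps$-dependent factor.
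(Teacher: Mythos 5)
Your overall strategy (reduce to Lemma~\ref{lem:adj:hom:aux} plus duality arguments in the style of \cite[Lemma 4.4]{DemkowiczG_11_ADM}, then track the $\eps$-powers) is the right one, but the central reduction as you set it up does not go through. With your choice $\llambda:=\eps^{-\alpha}\ttau_0$ and $w:=\mu_0+(\eps^{1-\alpha}+\eps^{\beta-\alpha})v_0$, equation \eqref{lem:adj:hom:eq:1b} does give $\llambda+\pwnabla w=0$, but the second equation of the auxiliary system fails by the residual
$\div\llambda+\eps^{-\beta}w=\eps^{-\beta}\mu_0+\eps^{1-\alpha-\beta}v_0=-\eps^{3/4}\pwDelta v_0+\eps^{1/4}v_0$
(using \eqref{lem:adj:hom:eq:1a} and \eqref{lem:adj:hom:eq:1c} with $\alpha=1/4$, $\beta=1/2$). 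This is not a ``lower-order'' perturbation: the term $\eps^{3/4}\pwDelta v_0$ is precisely one of the quantities the lemma must bound, at exactly the critical scaling, and in any version of this argument $\pwDelta v_0$ is itself controlled only \emph{through} $\norm{w}{}$. Your proposed bootstrap is therefore circular — re-running the proof of Lemma~\ref{lem:adj:hom:aux} with this residual produces $\norm{w}{}\lesssim\cdots+\norm{\mu_0}{}+\eps^{3/4}\norm{v_0}{}$, and $\norm{\mu_0}{}$ can only be recovered from $\norm{w}{}$ again, with no visibly small absorption constant.

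The missing idea is the exact combination that makes the auxiliary system hold \emph{without} residual: the paper takes
$w:=-\eps\pwDelta v_0+v_0=\eps^{-1/4}\mu_0+v_0$ and $\llambda:=\eps^{1/2}\pwnabla v_0+\eps^{-1/2}\ttau_0$,
for which \eqref{lem:adj:hom:eq:1b}--\eqref{lem:adj:hom:eq:1c} give $\llambda+\pwnabla w=0$ and \eqref{lem:adj:hom:eq:1a} gives $\div\llambda+\eps^{-1/2}w=\eps^{-1/2}(\pwdiv\ttau_0+v_0)=0$ exactly. Lemma~\ref{lem:adj:hom:aux} then applies verbatim to $(\llambda,w)$ and yields the key bound on $\norm{w}{}$ in terms of the jumps of $\pwnabla v_0\cdot\n$, $\ttau_0\cdot\n$, $\mu_0$ and $v_0$; separate duality arguments (Helmholtz decomposition of $\pwnabla v_0$, and the lifting $-\eps\Delta\psi+\psi=-v_0$) convert $\norm{w}{}$ into bounds for $\norm{v_0}{}$, $\norm{\pwnabla v_0}{}$ and $\norm{\pwDelta v_0}{}$, after which $\ttau_0=\eps^{1/2}\llambda-\eps\pwnabla v_0$ and $\mu_0=-\eps^{5/4}\pwDelta v_0$ finish the proof. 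Without identifying this exact pair $(\llambda,w)$ your argument has a genuine gap; the rest of your outline (the duality technique, the role of the jump dual norms, and the observation that $\beta=2\alpha=1/2$ is forced by the $\eps$-bookkeeping) is consistent with the paper.
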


\begin{proof}
  Define 
  \begin{align} \label{def_w_lambda}
    w := -\eps\pwDelta v_0 + v_0 \quad \text{ and } \quad
    \llambda := \eps^{1/2}\pwnabla v_0 + \eps^{-1/2}\ttau_0.
  \end{align}
  We start by bounding $\norm{\pwnabla v_0}{}$. 
  As previously, we use a Helmholtz decomposition $\pwnabla v_0 = \nabla\psi + \ccurl\zz$
  with $\psi\in H^1_0(\Omega)$ and $\zz\in \HH(\ccurl,\Omega)$.
  Analogously as in the proof of Lemma~\ref{lem:adj:hom:aux}, the definitions of trace norms and stability
  of the Helmholtz decomposition show that
  \begin{align*}
      \norm{\psi}{1/2,\cS} &\lesssim
      \norm{\pwnabla v_0}{},\qquad
      \norm{\n\cdot\ccurl\zz}{-1/2,\cS}
      \le \norm{\pwnabla v_0}{}.
  \end{align*}
  Then, piecewise integration by parts and the definition of $w$ yield
  \begin{align*}
    \vdual{\pwnabla v_0}{\pwnabla v_0} =
    \eps^{-1} \vdual{w-v_0}{\psi} + \dual{\pwnabla v_0\cdot\n}{\psi} + \dual{v_0}{\n\cdot\ccurl\zz}
  \end{align*}
  so that, using the stability of the Helmholtz decomposition
  $\norm{\psi}{H^1_0(\Omega)}\lesssim \norm{\pwnabla v_0}{}$,
  \begin{align} \label{pfxx1}
    \norm{\pwnabla v_0}{}
    &\lesssim
      \eps^{-1} \norm{w-v_0}{-1}
    + \norm{\jump{\pwnabla v_0\cdot\n}}{-1/2,\cS'}
    + \norm{\jump{v_0}}{1/2,\cS'}
    \nonumber\\
    &\lesssim
    \eps^{-1} \norm{w}{} + \eps^{-1} \norm{v_0}{}
    + \norm{\jump{\pwnabla v_0\cdot\n}}{-1/2,\cS'} + \norm{\jump{v_0}}{1/2,\cS'}.
  \end{align}
  Now define $\psi\in H^1_0(\Omega)$ to be the weak solution of $-\eps\Delta\psi + \psi=-v_0$ such that
  $\norm{\psi}{} + \eps^{1/2}\norm{\nabla\psi}{} + \eps\norm{\Delta\psi}{}
   \lesssim \norm{v_0}{}$.
  By definition of the trace norms we find
  \begin{align*}
      \norm{\psi}{1/2,\cS} &\lesssim (1 + \eps^{-1/4})\norm{v_0}{},\qquad
      \norm{\nabla\psi\cdot\n}{-1/2,\cS} \lesssim \eps^{-1/2} \norm{v_0}{}.
  \end{align*}
  Twice integrating piecewise by parts, and using the definition of $w$, we obtain
  \begin{align*}
    \vdual{v_0}{v_0}
    &=
     \vdual{\eps\pwDelta v_0-v_0}{\psi}
    +\eps\dual{\nabla\psi\cdot\n}{v_0} - \eps\dual{\pwnabla v_0\cdot\n}{\psi}\\
    &=
    -\vdual{w}{\psi} +\eps\dual{\nabla\psi\cdot\n}{v_0} - \eps\dual{\pwnabla v_0\cdot\n}{\psi}.
  \end{align*}
  The previous estimates for the trace norms of $\psi$ show that
  \begin{align} \label{pfxx2}
    \norm{v_0}{} \lesssim 
    \norm{w}{} + \eps^{1/2} \norm{\jump{v_0}}{1/2,\cS'}
    + \eps^{3/4} \norm{\jump{\pwnabla v_0\cdot\n}}{-1/2,\cS'}.
  \end{align}
  Furthermore, by the definition of $w$,
  \begin{align} \label{pfxx3}
  \lefteqn{
    \norm{\pwDelta v_0}{}
    \le
    \eps^{-1}\norm{w-v_0}{}
  }\nonumber\\
    &\lesssim
    \eps^{-1}\norm{w}{} + \eps^{-1/2} \norm{\jump{v_0}}{1/2,\cS'}
    + \eps^{-1/4} \norm{\jump{\pwnabla v_0\cdot\n}}{-1/2,\cS'}.
  \end{align}
  Relations \eqref{lem:adj:hom:eq:1} show that $\llambda$ and $w$ satisfy \eqref{lem:adj:hom:aux:eq:1}
  so that we can use the bounds of Lemma~\ref{lem:adj:hom:aux}. By \eqref{lem:adj:hom:eq:1c},
  the definition of $w$ yields $w=\eps^{-1/4}\mu_0+v_0$, cf.~\eqref{def_w_lambda}.
  Using this representation and the definition \eqref{def_w_lambda} of $w$, $\llambda$, the bound
  by Lemma~\ref{lem:adj:hom:aux} gives
  \begin{align} \label{xxx}
     \norm{w}{}
     &\lesssim
       \eps \norm{\jump{\pwnabla v_0\cdot\n}}{-1/2,\cS'}
     + \norm{\jump{\ttau_0\cdot\n}}{-1/2,\cS'}
     + \norm{\jump{\mu_0}}{1/2,\cS'}
     + \eps^{1/4} \norm{\jump{v_0}}{1/2,\cS'}.
  \end{align}
  Using this estimate in \eqref{pfxx2} then yields
  \begin{align} \label{pfxx4}
     \norm{v_0}{}
     &\lesssim
       \eps^{3/4} \norm{\jump{\pwnabla v_0\cdot\n}}{-1/2,\cS'}
     + \norm{\jump{\ttau_0\cdot\n}}{-1/2,\cS'}
     + \norm{\jump{\mu_0}}{1/2,\cS'}
     + \eps^{1/4} \norm{\jump{v_0}}{1/2,\cS'},
  \end{align}
  which is the assertion for $v_0$.
  Correspondingly, from~\eqref{pfxx1}, \eqref{xxx} and \eqref{pfxx4}, we deduce the statement
  for $\norm{\pwnabla v_0}{}$, and \eqref{pfxx3} and \eqref{xxx} prove the assertion
  for $\norm{\pwDelta v_0}{}$. We have thus provided bounds for all terms depending on $v_0$.

  We continue with $\ttau_0$.
  By the definition \eqref{def_w_lambda} of $\llambda$,
  $\ttau_0=\eps^{1/2}\llambda - \eps\pwnabla v_0$. The latter term has been dealt with and
  Lemma~\ref{lem:adj:hom:aux} together with previous estimates bound $\norm{\llambda}{}$.
  By relation \eqref{lem:adj:hom:eq:1a}, $\norm{\pwdiv\ttau_0}{}$ can be estimated through
  \eqref{pfxx4}.

  It remains to consider the norms of $\mu_0$. By \eqref{lem:adj:hom:eq:1c},
  $\eps^{-1/2}\norm{\mu_0}{}=\eps^{3/4}\norm{\pwDelta v_0}{}$ and this term
  has already been analyzed. To estimate
  $\norm{\pwnabla\mu_0}{}$, by \eqref{lem:adj:hom:eq:1b} it is enough to
  bound $\eps^{-1/4}\norm{\ttau_0}{}$ and $\eps^{1/4}\norm{\pwnabla v}{}$,
  which we have done. This finishes the proof of the lemma.
\end{proof}
Let us combine the findings from Lemmas~\ref{lem:adj:inhom} and \ref{lem:adj:hom}.

\begin{cor} \label{cor}
Set $\beta=1/2$ and $\alpha=1/4$. Then it holds that
\begin{align*}
\lefteqn{
   \norm{\vv}{\V}
   \lesssim
     \norm{\vv}{\V,\opt}
}
   \\
   &+ \eps^{-1/2} \norm{\jump{\eps^{1/2}\pwnabla v\cdot\n}}{-1/2,\cS'}
    + \eps^{-3/4}\norm{\jump{\ttau\cdot\n}}{-1/2,\cS'}
    + \eps^{-3/4}\norm{\jump{\mu}}{1/2,\cS'}
    + \eps^{-5/4}\norm{\jump{\eps^{3/4} v}}{1/2,\cS'}.
\end{align*}
for any $\vv=(\ttau,\mu,v)\in \V$.
\end{cor}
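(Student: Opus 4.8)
The plan is to carry out the classical DPG decomposition of a test function into a conforming part (handled by the inhomogeneous adjoint estimate of Lemma~\ref{lem:adj:inhom}) and a nonconforming remainder (handled by the homogeneous broken estimate of Lemma~\ref{lem:adj:hom}), and then to reconcile the $\eps$-powers. Fix $\vv=(\ttau,\mu,v)\in\V$. First I would introduce the three residuals
\[
  F:=\pwdiv\ttau+v,\quad
  \GG:=\eps^{-\alpha}\ttau+\pwnabla\mu+(\eps^{1-\alpha}+\eps^{\beta-\alpha})\pwnabla v,\quad
  H:=\mu+\eps^{1-\alpha+\beta}\pwDelta v,
\]
which belong to $L_2(\Omega)$, $\LL_2(\Omega)$, $L_2(\Omega)$, and observe from \eqref{norm_Vopt} (with $\alpha=1/4$, $\beta=1/2$) that $\norm{F}{}+\norm{\GG}{}+\eps^{-1/2}\norm{H}{}\lesssim\norm{\vv}{\V,\opt}$, since the first three summands of the optimal test norm are exactly $\norm{F}{}^2$, $\norm{\GG}{}^2$ and $\eps^{-1}\norm{H}{}^2$.

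Next I would feed $F$, $\GG$, $H$ into Lemma~\ref{lem:adj:inhom} to obtain a conforming triple $(\ttau_1,\mu_1,v_1)\in\HH(\div,\Omega)\times H^1_0(\Omega)\times H^1_0(\Delta,\Omega)$ solving \eqref{lem:adj:inhom:eq}. Substituting $\alpha=1/4$, $\beta=1/2$ into \eqref{lem:adj:inhom:stab} and \eqref{lem:adj:inhom:stab2} collapses all the exponents (this is precisely where the relation $\alpha-\beta/2=0$ singled out in Remark~\ref{rem_alpha_beta} enters), and a term-by-term comparison with the weights in the definition of $\norm{\cdot}{\V}$ gives $\norm{(\ttau_1,\mu_1,v_1)}{\V}\lesssim\norm{F}{}+\norm{\GG}{}+\eps^{-1/2}\norm{H}{}\lesssim\norm{\vv}{\V,\opt}$. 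I would then set $(\ttau_0,\mu_0,v_0):=(\ttau-\ttau_1,\mu-\mu_1,v-v_1)$, which lies in the broken spaces $\HH(\div,\TT)\times H^1(\TT)\times H^1(\Delta,\TT)$; because the broken operators agree with the strong ones on conforming functions, subtracting \eqref{lem:adj:inhom:eq} from the defining identities for $F$, $\GG$, $H$ shows that $(\ttau_0,\mu_0,v_0)$ solves the homogeneous broken system \eqref{lem:adj:hom:eq:1}.

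The key small observation is conformity of the removed part: since $\ttau_1\in\HH(\div,\Omega)$, $\mu_1,v_1\in H^1_0(\Omega)$, and $\nabla v_1\in\HH(\div,\Omega)$ (because $v_1\in H^1_0(\Delta,\Omega)$), all skeleton jumps of $(\ttau_1,\mu_1,v_1)$ vanish, so the jumps of $(\ttau_0,\mu_0,v_0)$ coincide with those of $\vv$. Applying Lemma~\ref{lem:adj:hom} then yields $\norm{(\ttau_0,\mu_0,v_0)}{\V}\lesssim\norm{\jump{\ttau_0,\mu_0,v_0}}{}$, and rewriting $\norm{\jump{\ttau_0,\mu_0,v_0}}{}$ via the jump identities together with the trivial homogeneity relations $\eps^{-1/2}\norm{\jump{\eps^{1/2}\pwnabla v\cdot\n}}{-1/2,\cS'}=\norm{\jump{\pwnabla v\cdot\n}}{-1/2,\cS'}$ and $\eps^{-5/4}\norm{\jump{\eps^{3/4}v}}{1/2,\cS'}=\eps^{-1/2}\norm{\jump{v}}{1/2,\cS'}$ turns the bound into precisely the four skeleton terms in the assertion. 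Finally, the triangle inequality $\norm{\vv}{\V}\le\norm{(\ttau_1,\mu_1,v_1)}{\V}+\norm{(\ttau_0,\mu_0,v_0)}{\V}$ combines the two estimates.

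The only real work is the exponent bookkeeping in the second paragraph and the point that $v_1\in H^1_0(\Delta,\Omega)$ forces $\nabla v_1$ to be an $\HH(\div,\Omega)$-field, so that the \emph{normal-derivative} jump of the conforming part — not merely its tangential trace — cancels; with Lemmas~\ref{lem:adj:inhom} and \ref{lem:adj:hom} in hand, the corollary is a pure gluing step and I do not expect a substantive obstacle.
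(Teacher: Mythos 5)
Your proposal is correct and follows essentially the same route as the paper's proof: the identical choice of residuals $F$, $\GG$, $H$, the decomposition $\vv=\vv_1+\vv_0$ via Lemma~\ref{lem:adj:inhom} and Lemma~\ref{lem:adj:hom}, the observation that the jumps of the conforming part vanish so that the jumps of $\vv_0$ equal those of $\vv$, and the final triangle inequality. You in fact spell out more detail than the paper does (the exponent bookkeeping at $\alpha=1/4$, $\beta=1/2$ and the point that $v_1\in H^1_0(\Delta,\Omega)$ kills the normal-derivative jump), and all of those details check out.
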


\begin{proof}
The proof is standard, but is indicated for ease of reading.
For a given $\vv=(\ttau,\mu,v)\in \V$ define $\vv_1=(\ttau_1,\mu_1,v_1)\in \V$
as a solution of \eqref{lem:adj:inhom:eq} with
\begin{align*}
      F &:= \pwdiv\ttau + v,\quad
      \GG := \pwnabla\mu + (\eps^{1-\alpha} + \eps^{\beta-\alpha})\pwnabla v + \eps^{-\alpha}\ttau,\quad
      H := \eps^{1-\alpha+\beta}\pwDelta v + \mu        
\end{align*}
in $\Omega$. Then, $\vv_0=(\ttau_0,\mu_0,v_0):=\vv-\vv_1$ solves \eqref{lem:adj:hom:eq:1} and
the estimates from Lemmas~\ref{lem:adj:inhom} and \ref{lem:adj:hom} prove that
\begin{align*}
\lefteqn{
   \norm{\vv}{\V}
   \le \norm{\vv_1}{\V} + \norm{\vv_0}{\V}
   \lesssim
   \norm{F}{} + \norm{\GG}{} + \eps^{-1/2}\norm{H}{}
   +
}
   \\
   &
   \eps^{-1/2}\norm{\jump{\eps^{1/2}\pwnabla v_0\cdot\n}}{-1/2,\cS'}
   + \eps^{-3/4}\norm{\jump{\ttau_0\cdot\n}}{-1/2,\cS'}
   + \eps^{-3/4}\norm{\jump{\mu_0}}{1/2,\cS'}
   + \eps^{-5/4}\norm{\jump{\eps^{3/4} v_0}}{1/2,\cS'}.
\end{align*}
By construction of $\vv_1$,
\[
     \norm{\jump{\pwnabla v_1\cdot\n}}{-1/2,\cS'}
   = \norm{\jump{\ttau_1\cdot\n}}{-1/2,\cS'}
   = \norm{\jump{\mu_1}}{1/2,\cS'}
   = \norm{\jump{v_1}}{1/2,\cS'}
   = 0.
\]
Therefore the assertion follows with the characterization of the optimal test norm, cf.~\eqref{norm_Vopt}.
\end{proof}

\section{Numerical experiments}\label{sec_num}
We present several numerical experiments based on three different problems in two space dimensions.
The first problem, in~Subsection~\ref{exp1}, consists of a manufactured solution taken
from~\cite{LiuMSZ_09_TSS}. By means of this problem, we show that,
for smooth enough right-hand side $f$ of~\eqref{model}, our
method leads to best approximations in \textbf{balanced norms} for uniform and local mesh refinement.

For the second problem (Subsection~\ref{exp2}) we choose the right-hand side $f$ of~\eqref{model}
to have support only on a compact subset of the computational domain $\Omega$. This way, the problem
will exhibit inner layers which are not aligned with the mesh. We use adaptive mesh refinement
to show that our method \textbf{automatically resolves the layers}.
Note that DPG methods automatically provide a posteriori error estimates, cf.~\eqref{eq:est}.
In our case, Theorem~\ref{thm} combined with \eqref{eq:est} yields the robust control by
$\norm{B\uu_\hp-L}{V'}$ of the error in the balanced norm of the field variables.
As mentioned after relation \eqref{eq:est}, the residual $\norm{B\uu_\hp - L}{V'}$ is computable
locally due to the product structure of the test space $V$. Here, we do not consider the influence
of the approximation of the optimal test functions.

Furthermore, we will show
computed solutions with extremely small $\eps$ to demonstrate \textbf{robustness} of the approximations
in the sense that they are basically free of oscillations.

For the third problem (Subsection~\ref{exp3}) we choose a computational domain $\Omega$ with a 
re-entrant corner and a right-hand side $f$ of~\eqref{model} such that the solution $u$
exhibits singularities. Uniform mesh refinement will lead to sub-optimal convergence rates, but
adaptive mesh refinement will \textbf{recover optimal rates}.

To interpret our numerical results below one has to take into account the following three facts.
First, as stated in the introduction, our DPG analysis is based on the use of optimal test functions.
We did not analyze the effect of approximating these test functions. When considering
a singularly perturbed problem as the one under consideration, this discrepancy will have an
effect that increases when the perturbation parameter $\eps$ becomes smaller. Specifically,
one may lose robustness of the estimate in Theorem~\ref{thm} when using poor approximations
of optimal test functions (we do observe this).
Second, the DPG method (with optimal test functions) provides best approximations in the energy norm.
Since the domain of trace spaces (the skeleton) grows when meshes are refined one does not have
hierarchy of approximation spaces based on mesh refinement. This means that the error
may be not monotone (we do observe this in a preasymptotic range). Third, for small $\eps$
the solutions of reaction diffusion problems have strong boundary layers. In these cases
the primal unknown $u$ can be approximated well on coarse meshes but the flux $\ssigma$ and
the Laplacian $\rho$ can not. Therefore, for coarse meshes and comparatively small $\eps$,
the individual approximation errors from $u$, $\ssigma$ and $\rho$ constituting the balanced norm
can have different magnitudes. Indeed, the error in the balanced norm can be large for coarse
meshes. This is not a problem of the DPG method but an approximation property (once one accepts
the use of the balanced norm). It also does not contradict the balancedness of the norm which
holds for the exact solution and typical boundary layers.

We use triangular meshes $\TT$, and throughout $\#\TT$ denotes the number of triangles.
In all experiments we use the trial space $\U_\hp\subset \U$ defined by
\begin{align*}
  \U_\hp := P^0(\TT) \times \left[ P^0(\TT) \right]^2 \times P^0(\TT)
  \times S^1_0(\cS) \times S^1_0(\cS) \times P^0(\cS) \times P^0(\cS).
\end{align*}
The trial-to-test operator $\Theta=J^{-1}B$ needed for the computation
of optimal test functions is approximated using the discrete operator
$J_\hp:\; \V_\hp\to\V_\hp'$ with finite-dimensional space $\V_\hp\subset \V$ defined by
\begin{align*}
  \V_\hp := \left[ P^4(\TT) \right]^2 \times P^4(\TT) \times P^4(\TT).
\end{align*}
Here, $P^p(\TT)$ is the space of $\TT$-piecewise polynomials with degree at most $p$.
The basis for $P^p(\TT)$ is
based on Lobatto shape functions on the reference elements,
as defined in~\cite[Sections~2.2.2 and 2.2.3]{SolinSD_04_HOF}.
The space $S^1_0(\cS)$ is the space of globally continuous,
$\cS$-piecewise linear functions, and $P^0(\cS)$ is the space of $\cS$-piecewise constant functions.
For $H(\div)$-parts in our bilinear forms, we use the standard element map instead of the Piola transform.
Our choice of $\V_\hp$ is based on the analysis in~\cite{GopalakrishnanQ_14_APD}, where the authors consider
the Laplace equation.
Although their analysis is not directly applicable to the problem studied in the paper at hand,
we can use it to heuristically choose the approximation order of the test functions in our discretization.
In~\cite{GopalakrishnanQ_14_APD}, the authors show
that for a valid approximation $J_\hp$ of the Riesz operator $J$ for the Laplace equation, it suffices to
raise the polynomial degree of the trial space by the dimension $d$ of the physical space $\R^d$.
As our discretization amounts to a bi-Laplace equation in $\R^2$,
we raise the polynomial degree by $2\cdot d = 2\cdot2 = 4$.
A theoretical analysis of this additional approximation as in~\cite{GopalakrishnanQ_14_APD} is out of
the scope of this paper and is left for future research. In the experiment of Section~\ref{exp1}
below, we numerically investigate the influence of different orders of polynomial approximation for
the test functions.
The experiments were performed in C++. The inverse $\mathbf{J}^{-1}$ of the matrix
corresponding to the approximated Riesz operator $J_\hp$ is once and for all
computed block-wise with a Cholesky decomposition. The overall linear system is then written as
$\mathbf{B}^T \mathbf{J}^{-T} \mathbf{B} \mathbf{x} = \mathbf{B}^T \mathbf{J}^{-T} \mathbf{f}$
with $\mathbf{B}$ and $\mathbf{f}$ being the discretizations of $B$ and the linear functional $L$,
respectively. It is solved by conjugate gradients without preconditioning.

For adaptive mesh refinement with mesh sequence $\TT_\ell$,
we start with a coarse mesh $\TT_0$. In order to compute
the mesh $\TT_{\ell+1}$ from $\TT_\ell$,
we define, in accordance with~\eqref{eq:est}, the local error indicator
\begin{align*}
  \eta_\ell(\el)^2 := \norm{J_\hp|_\el^{-1}(L-B\uu_\ell)}{\V|_\el}^2
\end{align*}
for all $\el\in\TT_\ell$.
Here $\uu_\ell$ is the DPG solution on the mesh $\TT_\ell$.
We then mark a set of elements $\MM_\ell\subset\TT_\ell$ with minimal cardinality such that
$\sum_{\el\in\MM_\ell} \eta_\ell(\el)^2 \geq \theta \sum_{\el\in\TT_\ell}\eta_\ell(\el)^2$.
In all experiments, we choose $\theta = 0.75$, and for local mesh-refinement we use the so-called
\textit{Newest Vertex Bisection}, cf.~\cite{Baensch_91_LMR}.
\subsection{Problem with manufactured solution}\label{exp1}
The following example is taken from~\cite{LiuMSZ_09_TSS}:
\begin{align*}
  -\eps\Delta u + (1+x^2y^2 e^{xy/2})u = f \quad\text{ on }\Omega := (0,1)^2,
\end{align*}
where
\begin{align*}
  u(x,y) = &x^3(1+y^2) + \sin(\pi x^2) + \cos(\pi y/2)\\
  &(x+y)\left[ e^{-2x/\sqrt{\eps}} + e^{-2(1-x)/\sqrt{\eps}} + e^{-3y/\sqrt{\eps}} 
  + e^{-3(1-y)/\sqrt{\eps}} \right].
\end{align*}
Although in our analysis we assumed $c=1$ for the reaction coefficient in $-\eps\Delta u + cu=f$
and homogeneous Dirichlet boundary condition, our method can be extended in a simple way to cover more general cases.
The incorporation of arbitrary boundary values is done in a standard way by extending them
to the domain $\Omega$, while we deal with the case $c\neq1$ by testing with $v-\eps^\beta \Delta_\TT v / c$,
cf.~\cite{LinS_12_BFE}. In Fig.~\ref{fig:man}, we present the outcome of experiments
with different values of $\eps$ ranging from $1$ to $10^{-16}$ (indicated by different colors),
for uniform as well as adaptive mesh refinement (indicated by crosses and squares). A detailed legend is given in the lower left plot.
We plot one error (upper left) and three quotients of errors (lower left, upper and lower right).
\begin{itemize}
  \item \textbf{upper left:} This graph shows the squared energy error $\norm{\uu-\uu_\hp}{E}^2$ versus the number
    of triangles $\#\TT$. For uniform and adaptive refinement, we see an asymptotic behaviour of $\OO(\#\TT^{-1})$
    (which amounts to $\norm{\uu-\uu_\hp}{E}=\OO(h)$ in the uniform case) as soon as the boundary layers are resolved.
    This happens instantly for $\eps=1$, for smaller values of $\eps$ the error increases before it runs into the
    asymptotic regime, and in some cases the asymptotic range is not reached for the considered number of unknowns.
    Note that, for adaptive refinement, this increase happens faster (which means that adaptivity performs better).
  \item \textbf{upper right:} This graph shows the quotient
    $(\norm{u-u_\hp}{}^2 + \norm{\ssigma-\ssigma_\hp}{}^2 + \eps\norm{\rho-\rho_\hp}{}^2) / \norm{\uu-\uu_\hp}{E}^2$
    of the field variables in the balanced norm and the energy error versus the number of triangles $\#\TT$.
    According to Theorem~\ref{thm}, when using exact optimal test functions this quotient is bounded from
    above independently of $\eps$. We see that this is the case only for moderate $\eps$ or when meshes are sufficiently
    fine. Our explanation is that we have used only approximated optimal test functions.
    Sufficiently fine meshes that resolve boundary layers allow for good approximations
    of optimal test functions and then, the error ratio stabilizes independently of $\eps$.
    Again, this stabilization happens faster for the adaptive version.
  \item \textbf{lower left and lower right:} 
    We expect our method to deliver best approximations in balanced norms.
    However, this property does not mean that the individual terms of the error are balanced
    uniformly in $\eps$ and $\TT$. For a coarse mesh relative to $\eps$, $u$ can be approximated well
    whereas $\ssigma$ and $\rho$ have large values in the layers and their approximations will be worse.
    This is confirmed in the lower left and right plots of Fig.~\ref{fig:man}. There
    we plot the quotients $\norm{u-u_\hp}{}^2 / \norm{\ssigma-\ssigma_\hp}{}^2$
    and $\norm{u-u_\hp}{}^2 / (\eps \norm{\rho-\rho_\hp}{}^2)$, respectively.
    We observe that for $\eps\in \{10^{0,-4,-6}\}$ we reach the asymptotic range, at least for the adaptive versions, where
    the ratios are of order $O(1)$. For smaller $\eps$ (and $\eps=10^{-6}$ with uniform meshes) we have a clear dominance
    of the approximation errors of $\ssigma$ and $\rho$. Eventually, when meshes are fine enough, one sees a stabilization
    but not yet of the order $O(1)$. Note that this stabilization happens faster for the adaptive version.
    Therefore, the observed behavior can be explained by the approximation properties of spaces for components with different layers
    (and as before, the approximation of optimal test functions will have an effect).
    Our DPG scheme is not designed to provide the best approximation of $u$ in $L_2$,
    but to minimize the energy error $\norm{\uu-\uu_\hp}{E}$ which contains the balanced
    norm of the errors of all field variables. Once the layers are resolved, we expect
    the quotients $\norm{u-u_\hp}{}^2 / \norm{\ssigma-\ssigma_\hp}{}^2$ and
    $\norm{u-u_\hp}{}^2 / (\eps \norm{\rho-\rho_\hp}{}^2)$ to be of the order $O(1)$ (though we have not proved this).
\end{itemize}

The influence of the polynomial order for the approximation of the test functions is shown in Fig.~\ref{fig:man:testfunapx}.
We approximate the test space $V$ by the space
\begin{align*}
  \V_r := \left[ P^r(\TT) \right]^2 \times P^r(\TT) \times P^r(\TT)
\end{align*}
with $r=0,\dots, 6$, and plot the $L_2$ errors of the field variables $u$, $\ssigma$, and $\rho$.
While $r=0,1$ are obviously not sufficient, it seems that $r=2$ already yields optimal convergence rates.

\begin{sidewaysfigure}[htb]
  \psfrag{energy error^2}[cc][cc]{\scriptsize$\norm{\uu-\uu_\hp}{E}^2$}
  \psfrag{L2error^2}[cc][cc]{\scriptsize$\norm{u-u_\hp}{}^2$}
  \psfrag{(e^1/4 H1-error)^2}[cc][cc]{\scriptsize$\norm{\ssigma-\ssigma_\hp}{}^2$}
  \psfrag{(e^3/4 H2-error)^2}[cc][cc]{\scriptsize$\eps\norm{\rho-\rho_\hp}{}^2$}
  \psfrag{N}[cc][cc]{\scriptsize Number of elements $\#\TT$}
  \psfrag{y}[cc][cc]{}
  \psfrag{N^{-1}}[cr][cr]{\tiny $\#\TT^{-1}$}
  \psfrag{e=0 unif}[cr][cr]{\tiny$\eps=10^{n}$, $n=0$ unif}
  \psfrag{e=0 adaptive}[cr][cr]{\tiny$n=0$, adap}
  \psfrag{e=-4 unif}[cr][cr]{\tiny$n=-4$, unif}
  \psfrag{e=-4 adaptive}[cr][cr]{\tiny$n=-4$, adap}
  \psfrag{e=-06 unif}[cr][cr]{\tiny$n=-6$, unif}
  \psfrag{e=-06 adaptive}[cr][cr]{\tiny$n=-6$, adap}
  \psfrag{e=-08 unif}[cr][cr]{\tiny$n=-8$, unif}
  \psfrag{e=-08 adaptive}[cr][cr]{\tiny$n=-8$, adap}
  \psfrag{e=-12 unif}[cr][cr]{\tiny$n=-12$, unif}
  \psfrag{e=-12 adaptive}[cr][cr]{\tiny$n=-12$, adap}
  \psfrag{e=-16 unif}[cr][cr]{\tiny$n=-16$, unif}
  \psfrag{e=-16 adaptive}[cr][cr]{\tiny$n=-16$, adap}
  \psfrag{L2total/energy}[cc][cc]{\scriptsize
    $(\norm{u-u_\hp}{}^2 + \norm{\ssigma-\ssigma_\hp}{}^2 + \eps\norm{\rho-\rho_\hp}{}^2) / \norm{\uu-\uu_\hp}{E}^2$.}
  \psfrag{u/sigma}[cc][cc]{\scriptsize$\norm{u-u_\hp}{}^2 / \norm{\ssigma-\ssigma_\hp}{}^2$.}
  \psfrag{u/rho}[cc][cc]{\scriptsize$\norm{u-u_\hp}{}^2 / (\eps \norm{\rho-\rho_\hp}{}^2)$.}
  \centering
  \includegraphics[width=0.49\textwidth]{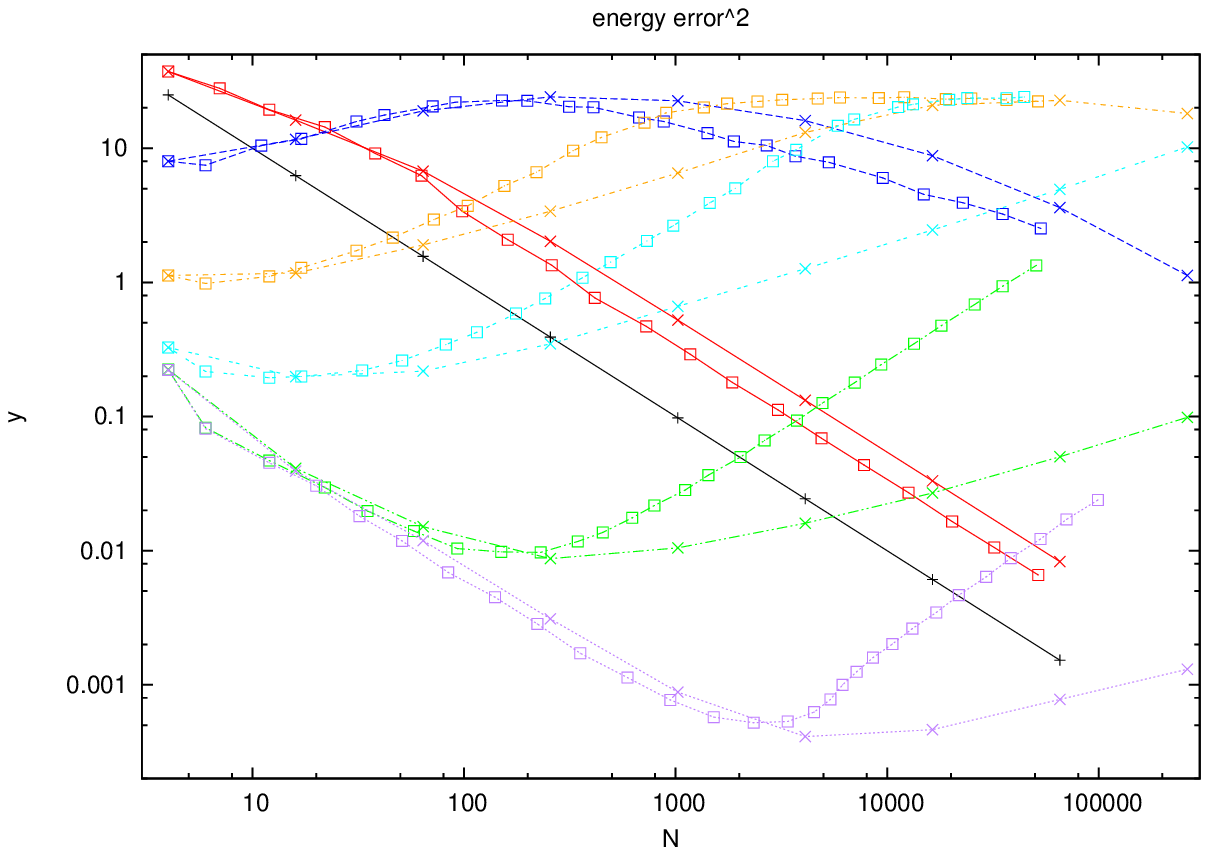}
  \includegraphics[width=0.49\textwidth]{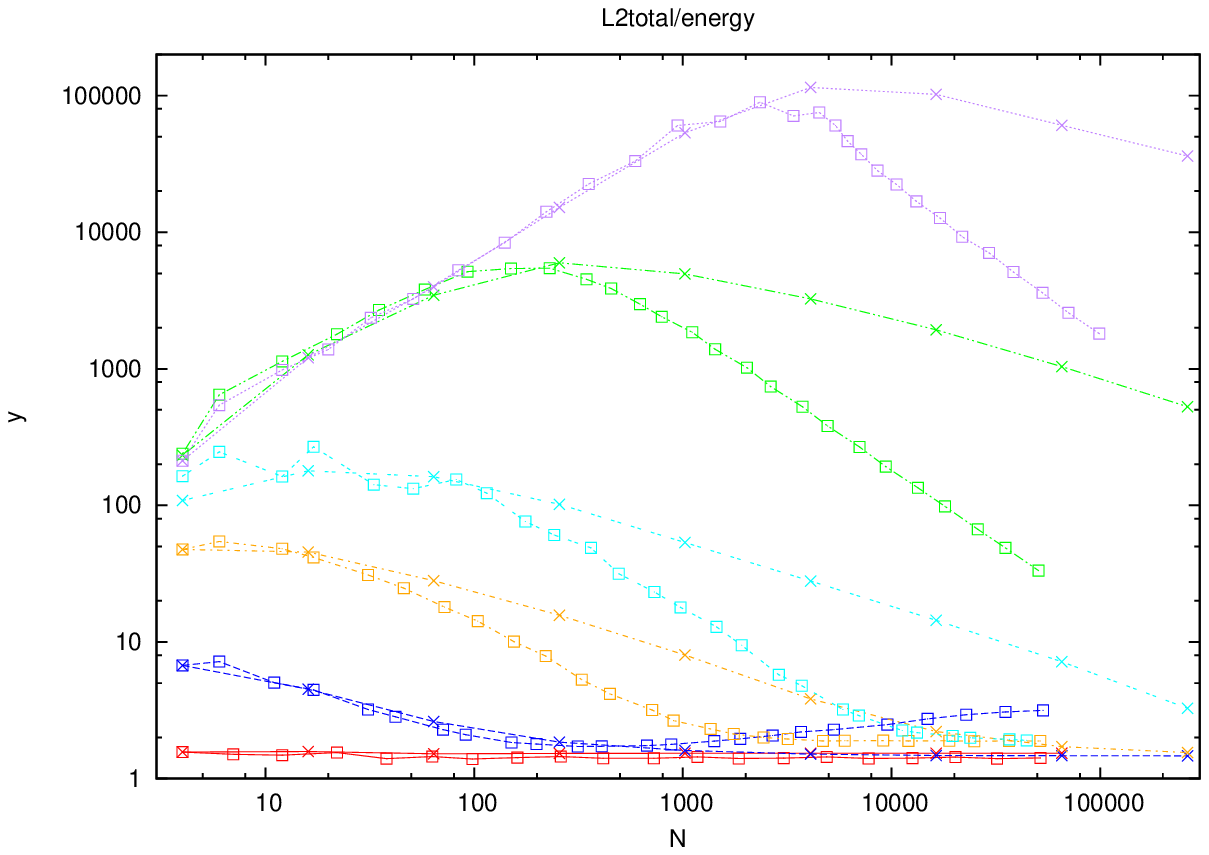}
  \includegraphics[width=0.49\textwidth]{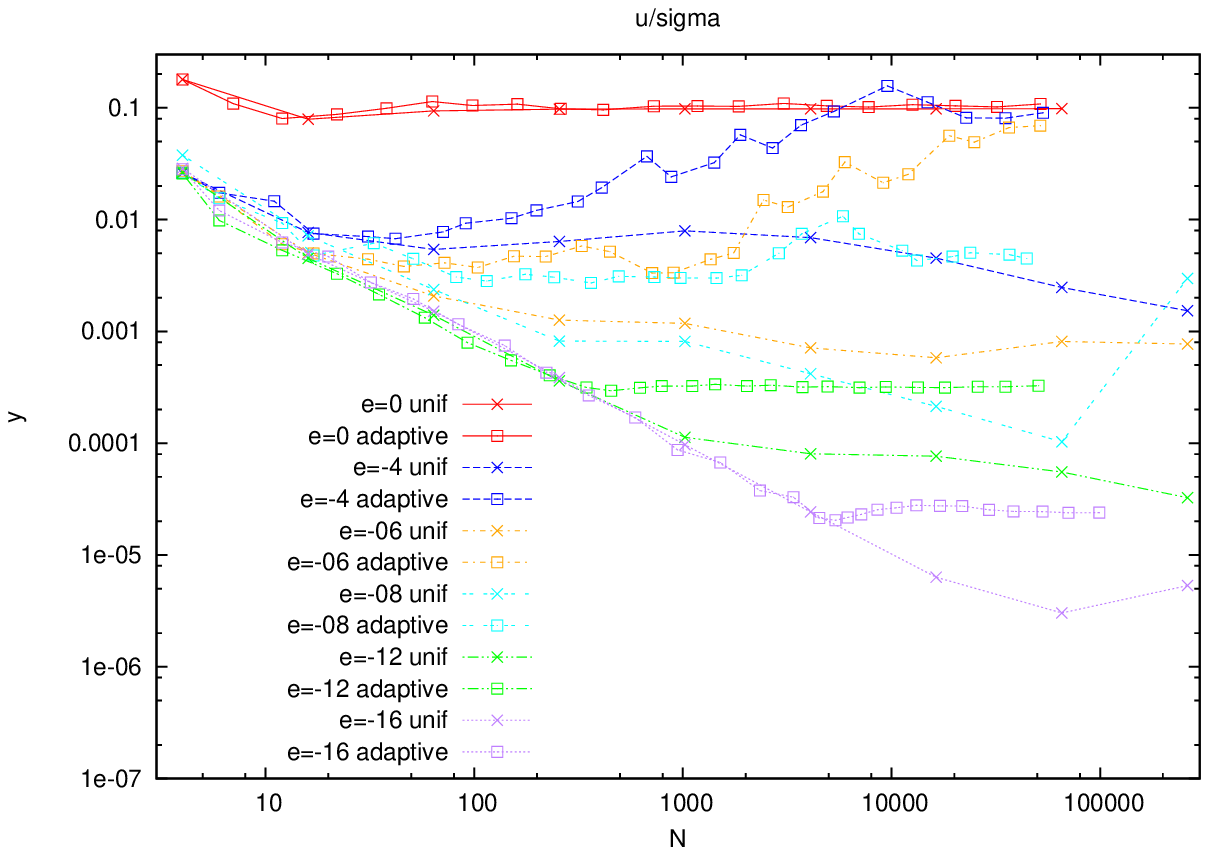}
  \includegraphics[width=0.49\textwidth]{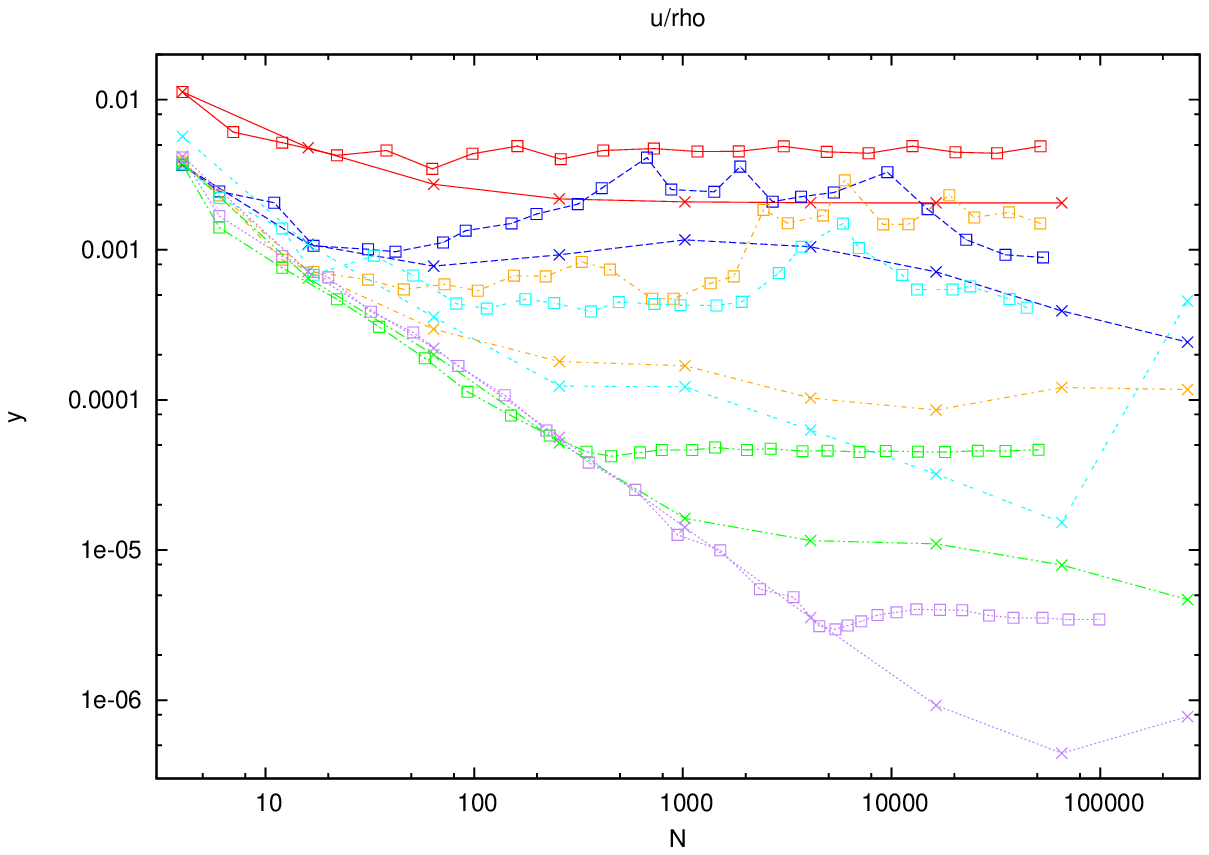}
  \caption{Manufactured solution from \S\ref{exp1}
           on uniform and adaptive meshes: energy error (upper left),
  error of field variables in balanced norm divided by energy error (upper right),
    $L_2$ error of $u$ divided by $L_2$ error of $\ssigma$ (lower left),
    $L_2$ error of $u$ divided $\eps^{1/2}$ times $L_2$ error of $\rho$ (lower right).
    The solid line in the upper left figure indicates $\OO(\#\TT^{-1})$.
    The parameter $\eps$ is chosen as $\eps=10^n$, where $n\in\left\{ 0,-4,-6,-8,-12,-16 \right\}$.
  }
  \label{fig:man}
\end{sidewaysfigure}
\begin{sidewaysfigure}[htb]
  \psfrag{L2 error of u^2}[cc][cc]{\scriptsize$\norm{u-u_\hp}{}^2$}
  \psfrag{L2 error of sigma^2}[cc][cc]{\scriptsize$\norm{\ssigma-\ssigma_\hp}{}^2$}
  \psfrag{L2 error of rho^2}[cc][cc]{\scriptsize$\norm{\rho-\rho_\hp}{}^2$}
  \psfrag{N^{-1}}[cr][cr]{\tiny $\#\TT^{-1}$}
  \psfrag{N}[cc][cc]{\scriptsize Number of elements $\#\TT$}
  \psfrag{y}[cc][cc]{}
  \psfrag{p=0 unif}[cc][cc]{\tiny $r=0$}
  \psfrag{p=1 unif}[cc][cc]{\tiny $r=1$}
  \psfrag{p=2 unif}[cc][cc]{\tiny $r=2$}
  \psfrag{p=3 unif}[cc][cc]{\tiny $r=3$}
  \psfrag{p=4 unif}[cc][cc]{\tiny $r=4$}
  \psfrag{p=5 unif}[cc][cc]{\tiny $r=5$}
  \psfrag{p=6 unif}[cc][cc]{\tiny $r=6$}
  \centering
  \includegraphics[width=0.49\textwidth]{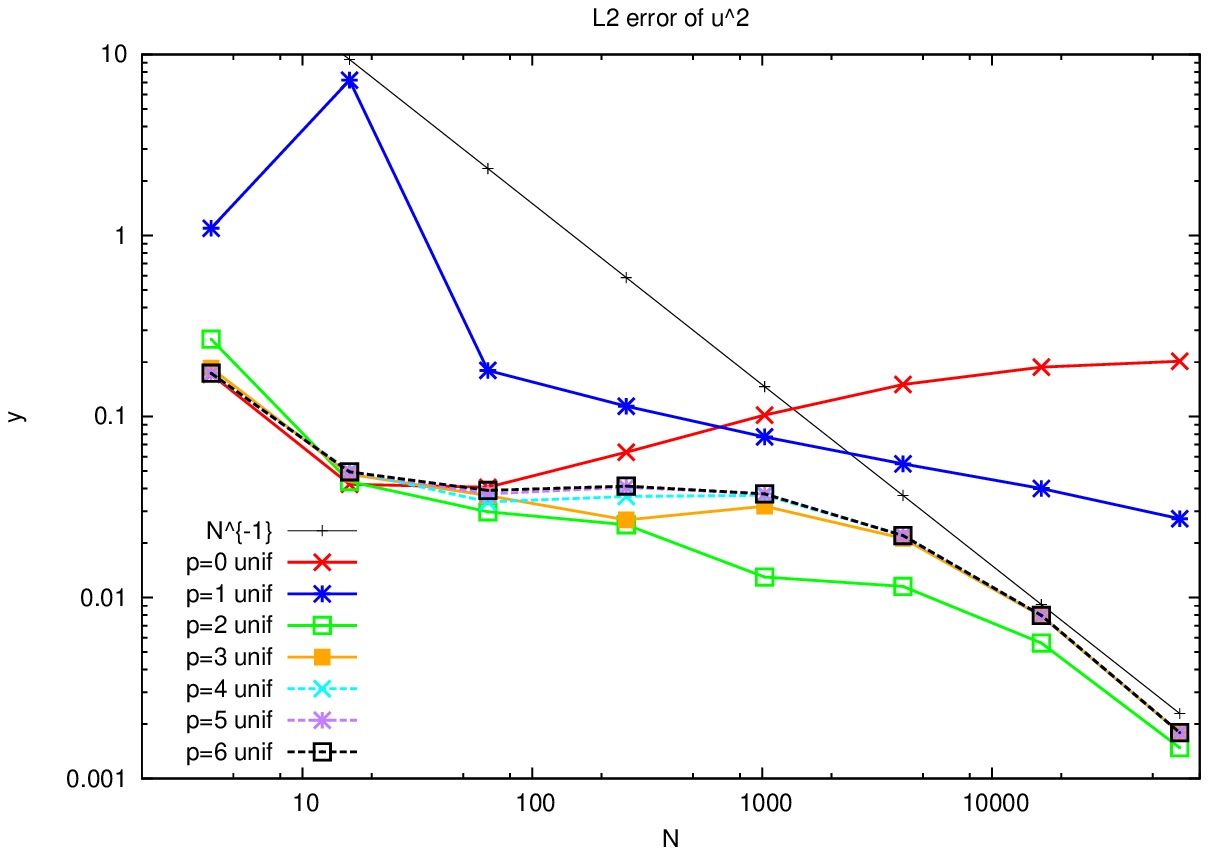}
  \includegraphics[width=0.49\textwidth]{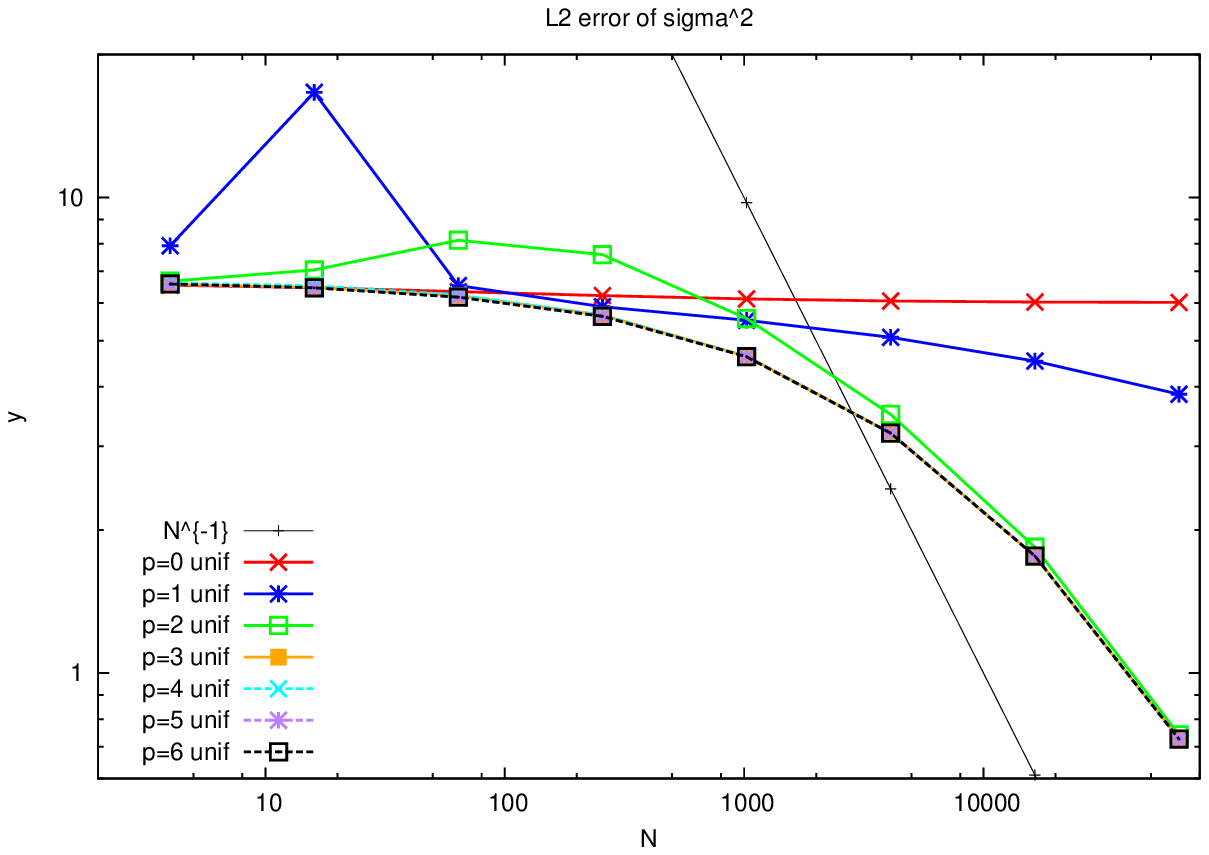}
  \includegraphics[width=0.49\textwidth]{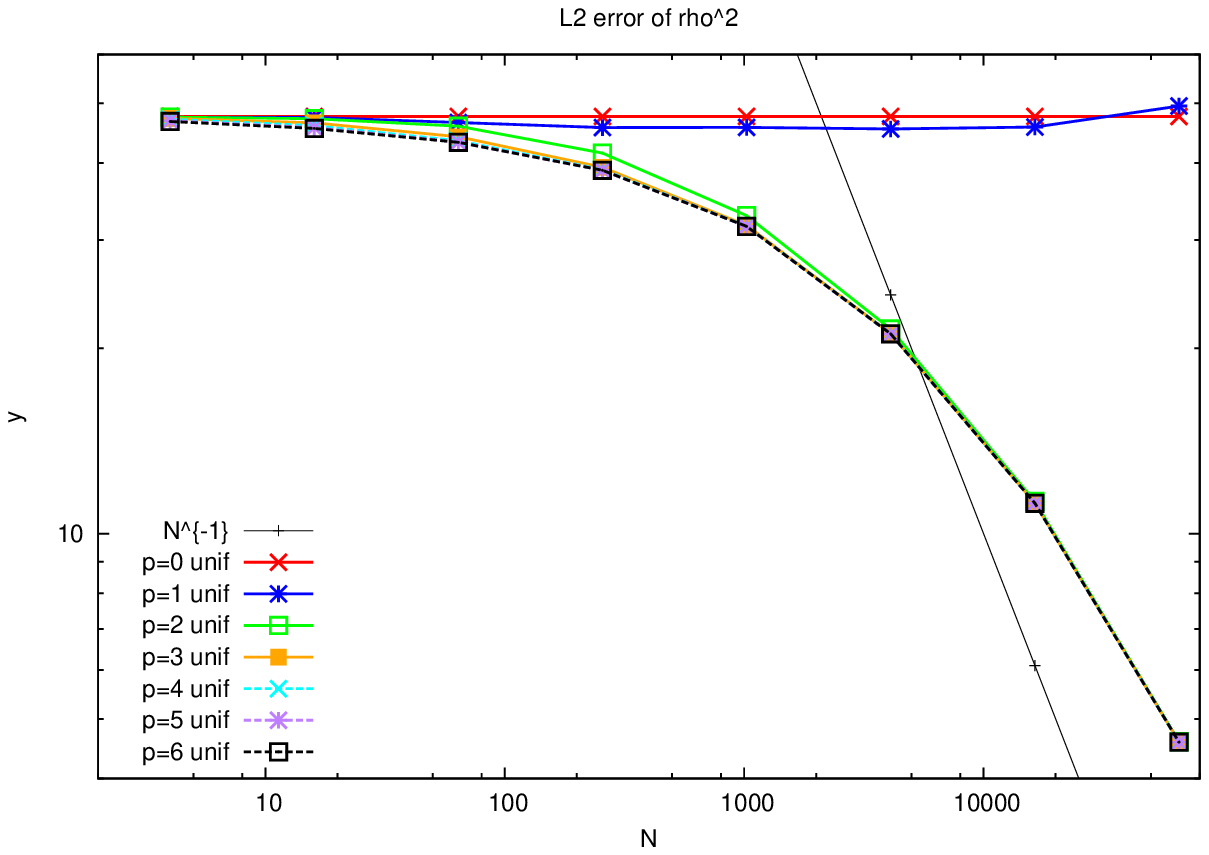}
  \caption{Manufactured solution from \S\ref{exp1}
           on uniform meshes with $\eps=10^{-4}$: The $L_2$ errors of the field variables
    $u$, $\ssigma$, and $\rho$ with different polynomial orders $r$ for the approximation of the test space.}
  \label{fig:man:testfunapx}
\end{sidewaysfigure}
\subsection{Problem with a layer not aligned to the mesh}\label{exp2}
We choose $\Omega = (0,1)^2$ and, with $\cc=(0.5,0.5)$ denoting the center of mass of $\Omega$,
\begin{align*}
  f(x,y) = \begin{cases} 1 & \text{ for } \abs{(x,y)-\cc}^2 < 0.1,\\
    0 & \text{ otherwise}.
  \end{cases}
\end{align*}
For small $\eps$, the solution $u$ is going to adjust to $f$ and hence we expect layers
inside $\Omega$ which cannot be aligned to the mesh. In addition, within the approximation properties
of our method, this problem is singular. More specifically, we have
$f\in H^{1/2-s}(\Omega)$ for all $s>0$, such that $\rho\in H^{1/2-s}(\Omega)$ for all $s>0$ only.
As we measure the error of $\rho$ in $L_2(\Omega)$, we expect a uniform convergence rate
of $\norm{\uu-\uu_\hp}{E}=\OO(\#\TT^{-1/4+s})$ for all $s>0$.
This is what we see in Figure~\ref{fig:unaligned:enrg} for uniform mesh refinement.
However, adaptive mesh refinement yields the optimal convergence rate $\OO(\#\TT^{-1/2})$. Note
that we plot squared quantities.
In Figure~\ref{fig:unaligned:mesh} we plot an adaptive mesh for $\eps=10^{-16}$
with approx. 12000 elements.
In Figure~\ref{fig:unaligned:sol}, we plot the $u$-component of solutions $\uu_\hp$
for different values of $\eps = 10^{ \{-16,-32,-64,-128\} }$ on adaptively refined meshes in
order to demonstrate the robustness of the approximations.
\begin{figure}[htb]
  \psfrag{energy error^2}[cc][cc]{\scriptsize$\norm{\uu-\uu_\hp}{E}^2$}
  \psfrag{L2error^2}[cc][cc]{\scriptsize$\norm{u-u_\hp}{}^2$}
  \psfrag{(e^1/4 H1-error)^2}[cc][cc]{\scriptsize$\norm{\ssigma-\ssigma_\hp}{}^2$}
  \psfrag{(e^3/4 H2-error)^2}[cc][cc]{\scriptsize$\eps\norm{\rho-\rho_\hp}{}^2$}
  \psfrag{N}[cc][cc]{\scriptsize Number of elements $\#\TT$}
  \psfrag{y}[cc][cc]{}
  \psfrag{N^{-1}}[cr][cr]{\tiny $\#\TT^{-1}$}
  \psfrag{N^{-1/2}}[cr][cr]{\tiny $\#\TT^{-1/2}$}
  \psfrag{e=0 unif}[cr][cr]{\tiny$n=0$, unif}
  \psfrag{e=0 adap}[cr][cr]{\tiny$n=0$, adap}
  \psfrag{e=-4 unif}[cr][cr]{\tiny$n=-4$, unif}
  \psfrag{e=-4 adap}[cr][cr]{\tiny$n=-4$, adap}
  \psfrag{e=-06 unif}[cr][cr]{\tiny$n=-6$, unif}
  \psfrag{e=-06 adap}[cr][cr]{\tiny$n=-6$, adap}
  \psfrag{e=-08 unif}[cr][cr]{\tiny$n=-8$, unif}
  \psfrag{e=-08 adap}[cr][cr]{\tiny$n=-8$, adap}
  \psfrag{e=-12 unif}[cr][cr]{\tiny$n=-12$, unif}
  \psfrag{e=-12 adap}[cr][cr]{\tiny$n=-12$, adap}
  \psfrag{e=-16 unif}[cr][cr]{\tiny$n=-16$, unif}
  \psfrag{e=-16 adap}[cr][cr]{\tiny$n=-16$, adap}
  \centering
  \includegraphics{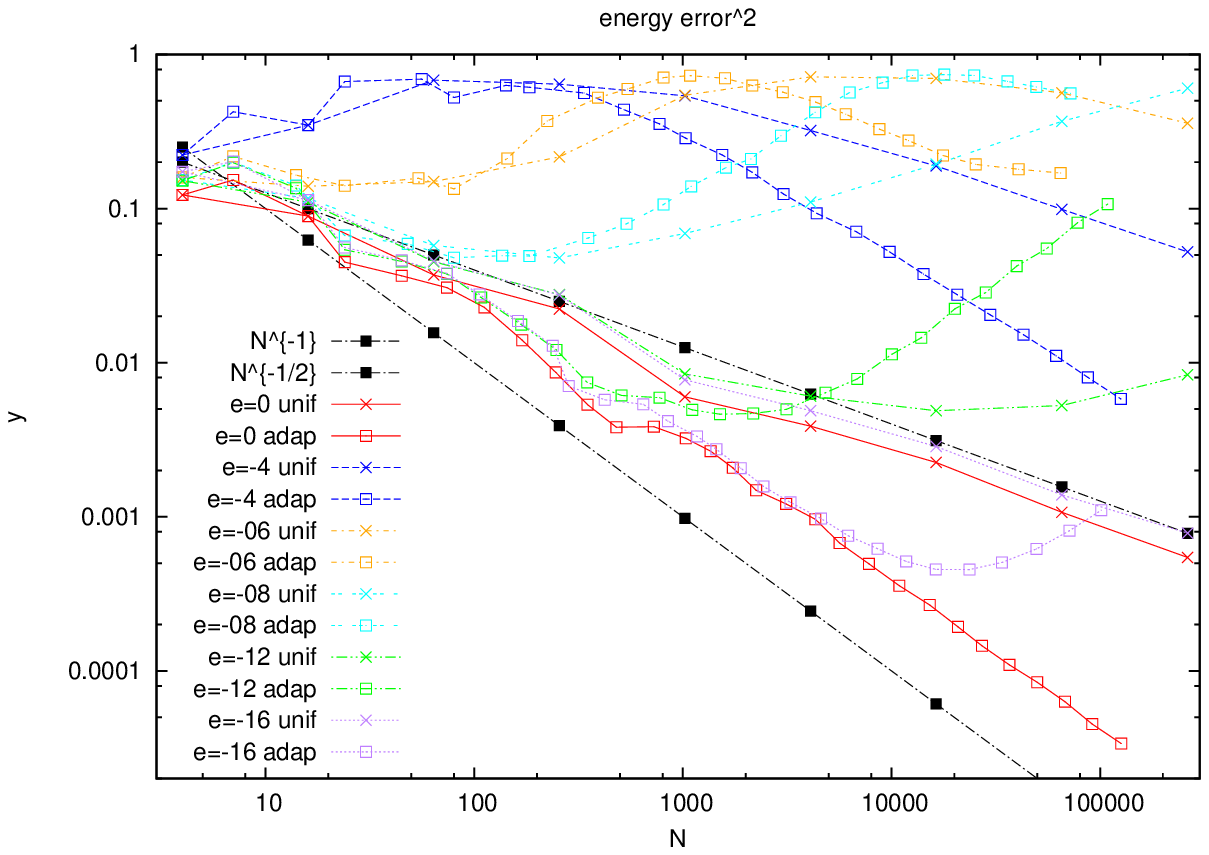}
  \caption{Energy error for solution from \S\ref{exp2}
           with unaligned layer on uniform and adaptive meshes.
  The parameter $\eps$ is chosen as $\eps=10^n$, where $n\in\left\{ 0,-4,-6,-8,-12,-16 \right\}$.
  }
  \label{fig:unaligned:enrg}
\end{figure}
\begin{figure}[htb]
  \centering
  \includegraphics[width=0.49\textwidth]{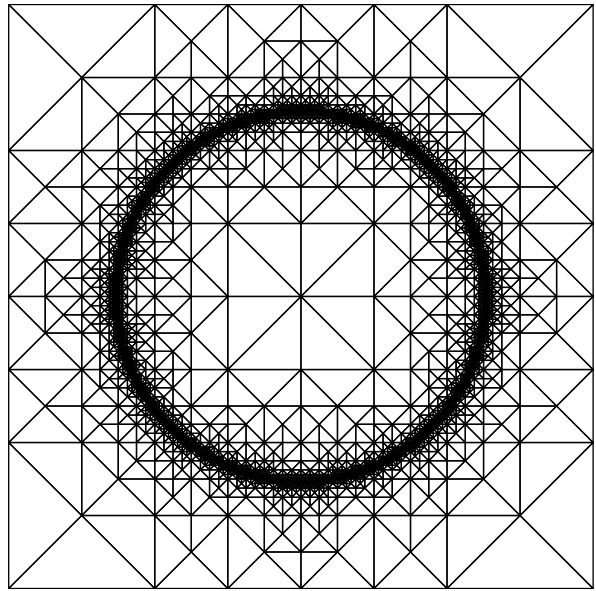}
  \caption{Adaptive mesh for problem from \S\ref{exp2}
           with approx. 12000 elements for $\eps=10^{-16}$.}
  \label{fig:unaligned:mesh}
\end{figure}
\begin{figure}[htb]
  \centering
  \includegraphics[width=0.49\textwidth]{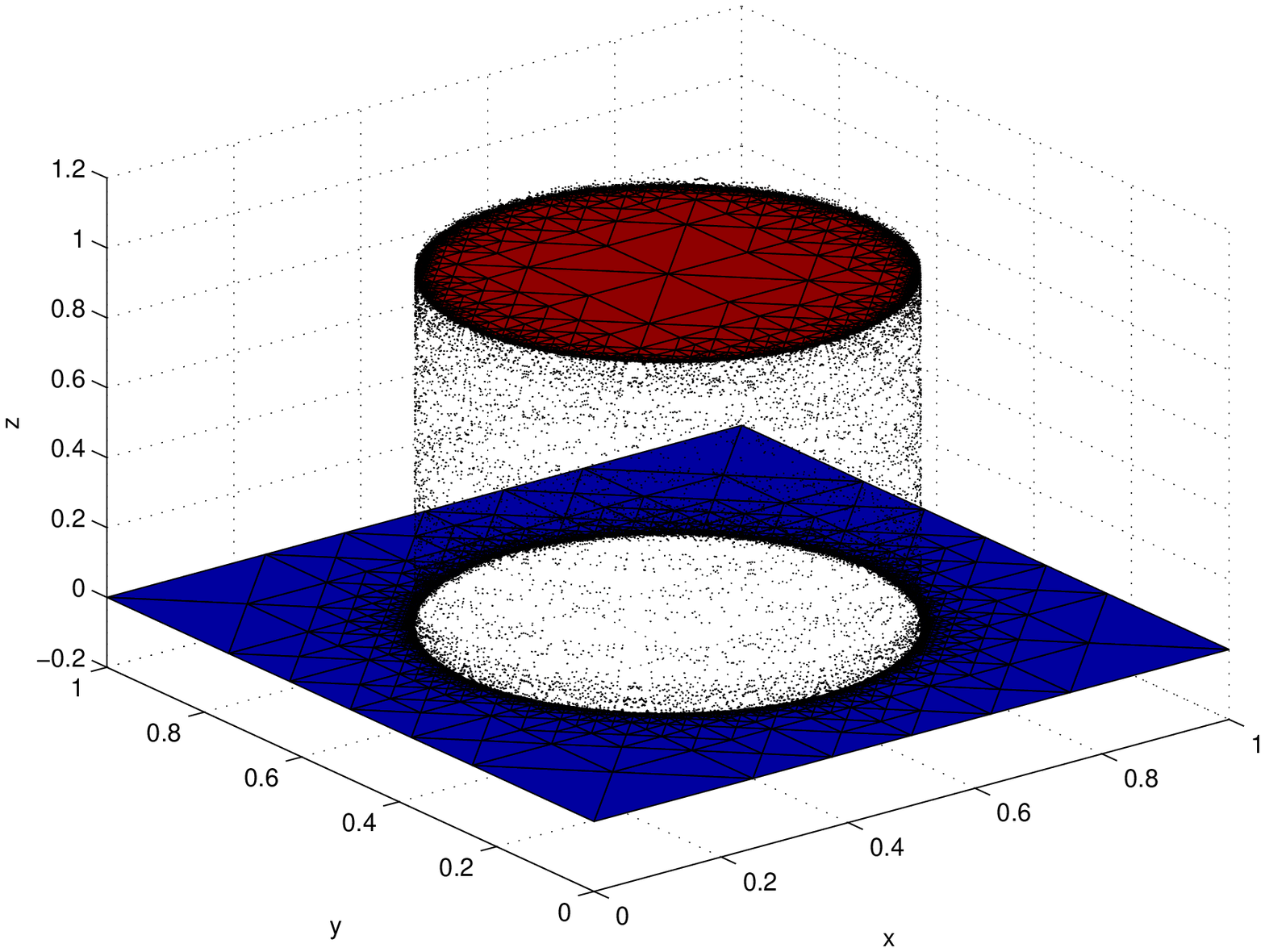}
  \includegraphics[width=0.49\textwidth]{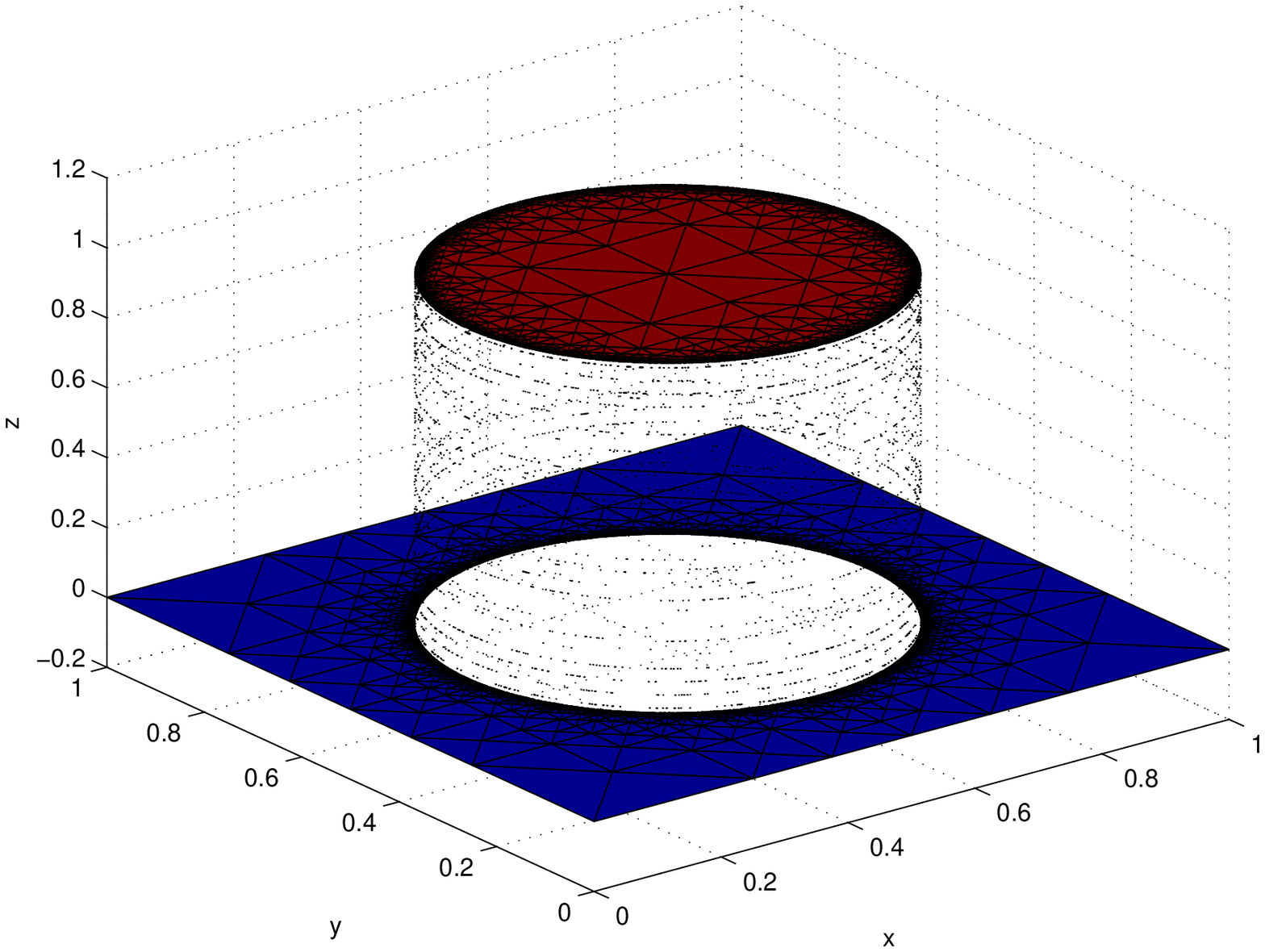}
  \includegraphics[width=0.49\textwidth]{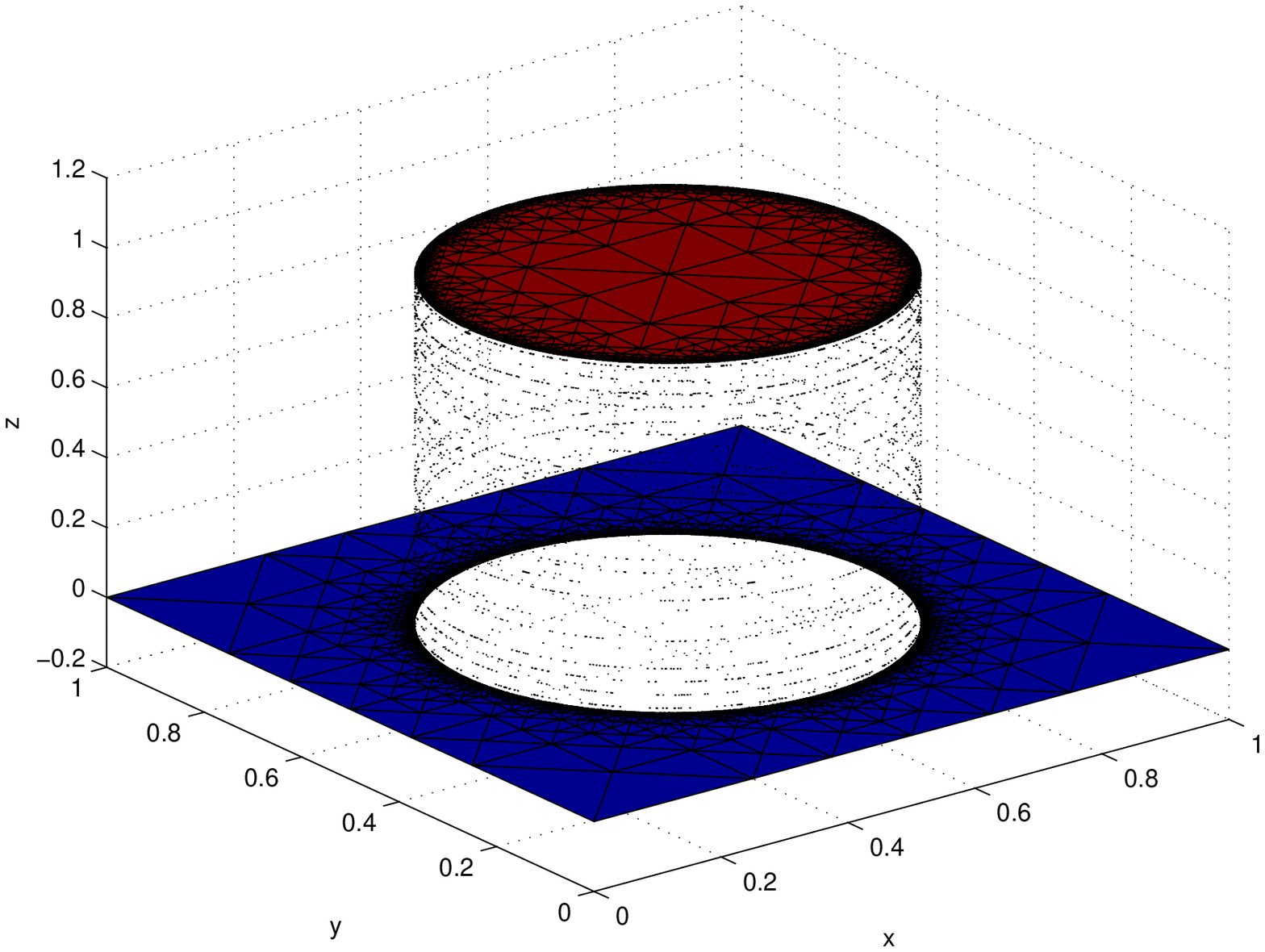}
  \includegraphics[width=0.49\textwidth]{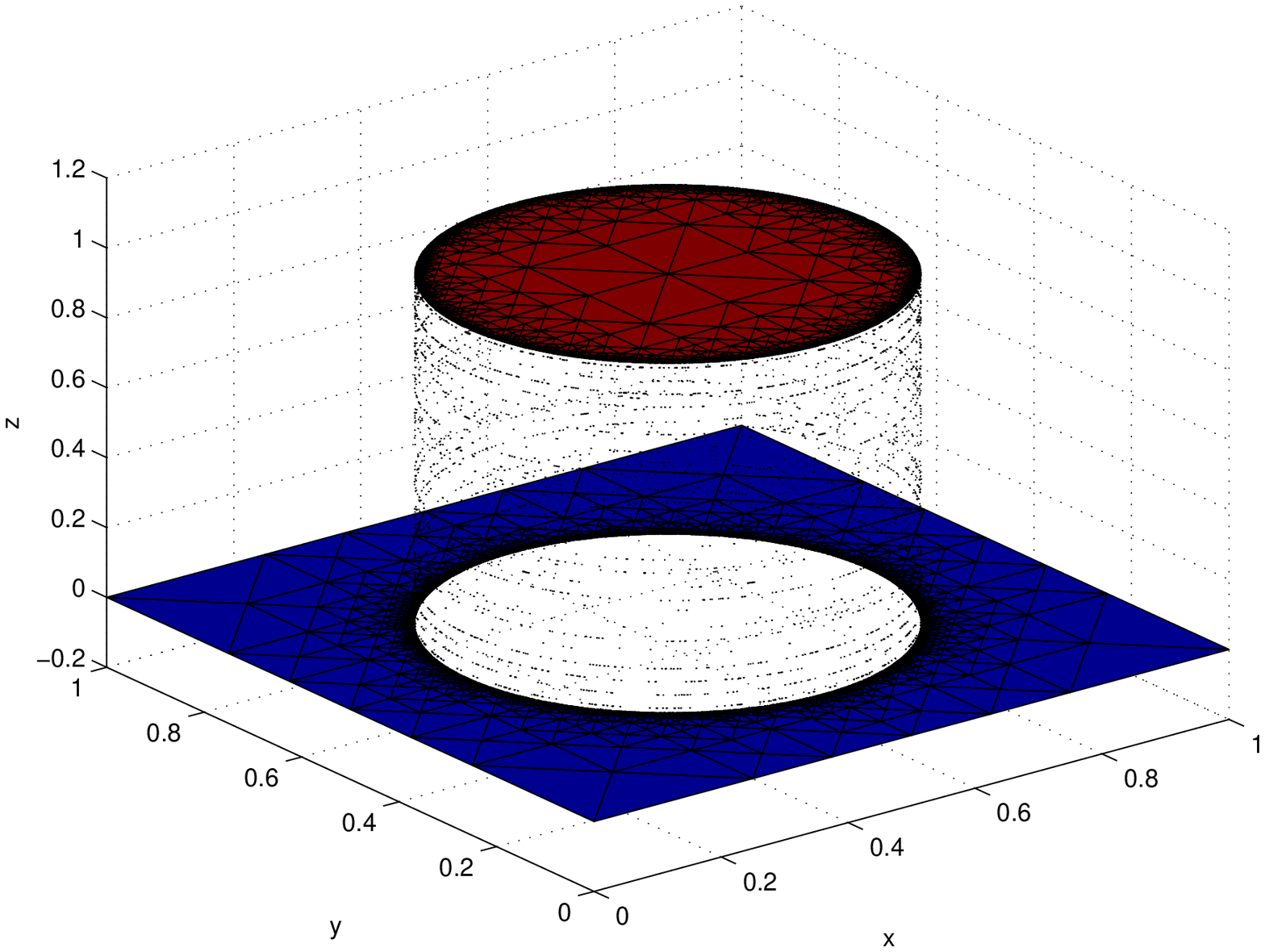}
  \caption{Approximation $u_\hp$ for problem from \S\ref{exp2}
           with unaligned layer on adaptive meshes with approx. 20000 elements
           for $\eps=10^{-16}$ (upper left),
           $\eps=10^{-32}$ (upper right),
           $\eps=10^{-64}$ (lower left),
           $\eps=10^{-128}$(lower right).}
  \label{fig:unaligned:sol}
\end{figure}
\subsection{Problem with a geometric singularity}\label{exp3}
We choose $\Omega$ to be an L-shaped domain and $f=1$. Therefore, we expect a geometric singularity at
the re-entrant corner which reduces the convergence rate. In Fig.~\ref{fig:singular:enrg}, we
plot the energy error $\norm{\uu-\uu_\hp}{E}$. We see that adaptive mesh refinement
regains the optimal convergence rate, as soon as the boundary layers are resolved.
In Fig.~\ref{fig:sing:mesh}, we plot adaptive meshes with approx. 10000 elements for
$\eps=1,10^{-4},10^{-8},10^{-16}$. While for $\eps=1$ we see the strong refinement
at the re-entrant corner, already for $\eps=10^{-4}$ the boundary layers dominate the mesh refinement
in this regime.
\begin{figure}[htb]
  \psfrag{energy error^2}[cc][cc]{\scriptsize$\norm{\uu-\uu_\hp}{E}^2$}
  \psfrag{N^{-1}}[cr][cr]{\tiny $\#\TT^{-1}$}
  \psfrag{N^{-1/2}}[cr][cr]{\tiny $\#\TT^{-1/2}$}
  \psfrag{e=0 unif}[cr][cr]{\tiny$n=0$, unif}
  \psfrag{e=0 adap}[cr][cr]{\tiny$n=0$, adap}
  \psfrag{e=-4 unif}[cr][cr]{\tiny$n=-4$, unif}
  \psfrag{e=-4 adap}[cr][cr]{\tiny$n=-4$, adap}
  \psfrag{e=-06 unif}[cr][cr]{\tiny$n=-6$, unif}
  \psfrag{e=-06 adap}[cr][cr]{\tiny$n=-6$, adap}
  \psfrag{e=-08 unif}[cr][cr]{\tiny$n=-8$, unif}
  \psfrag{e=-08 adap}[cr][cr]{\tiny$n=-8$, adap}
  \psfrag{e=-12 unif}[cr][cr]{\tiny$n=-12$, unif}
  \psfrag{e=-12 adap}[cr][cr]{\tiny$n=-12$, adap}
  \psfrag{e=-16 unif}[cr][cr]{\tiny$n=-16$, unif}
  \psfrag{e=-16 adap}[cr][cr]{\tiny$n=-16$, adap}
  \centering
  \includegraphics{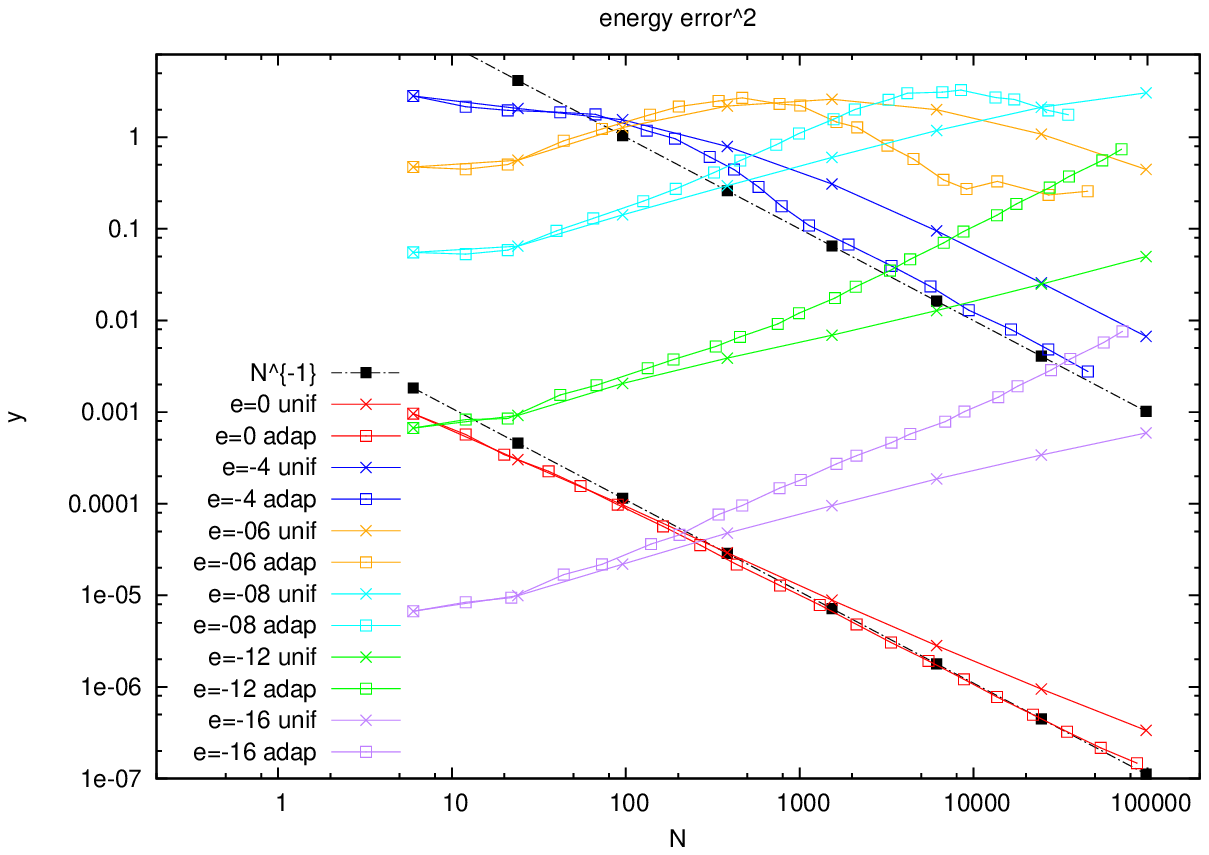}
  \caption{Energy error for singular solution from \S\ref{exp3} on uniform and adaptive meshes.
  The parameter $\eps$ is chosen as $\eps=10^n$, where $n\in\left\{ 0,-4,-6,-8,-12,-16 \right\}$.
  }
  \label{fig:singular:enrg}
\end{figure}
\begin{figure}[htb]
  \centering
  \includegraphics[width=0.49\textwidth]{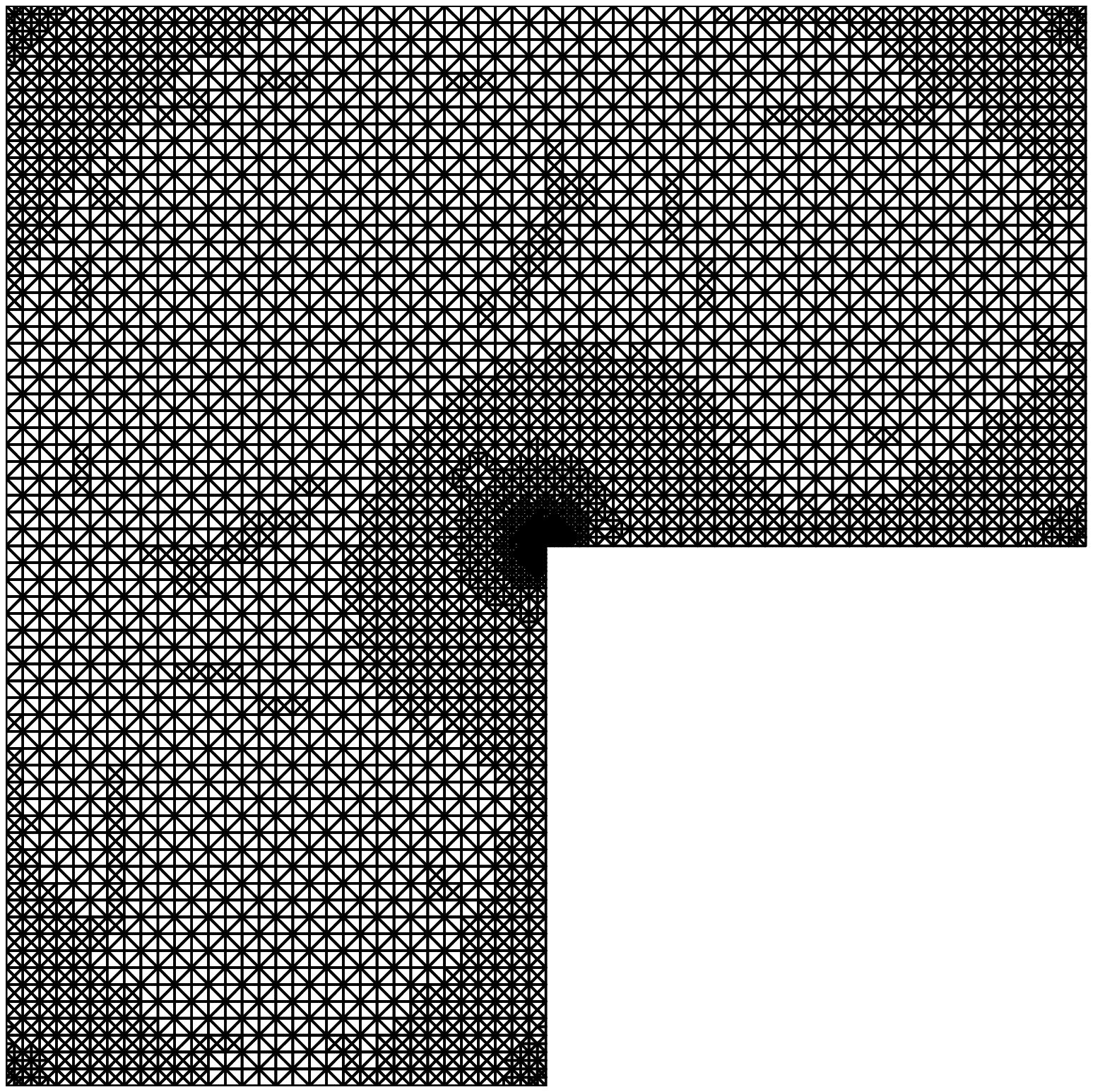}
  \includegraphics[width=0.49\textwidth]{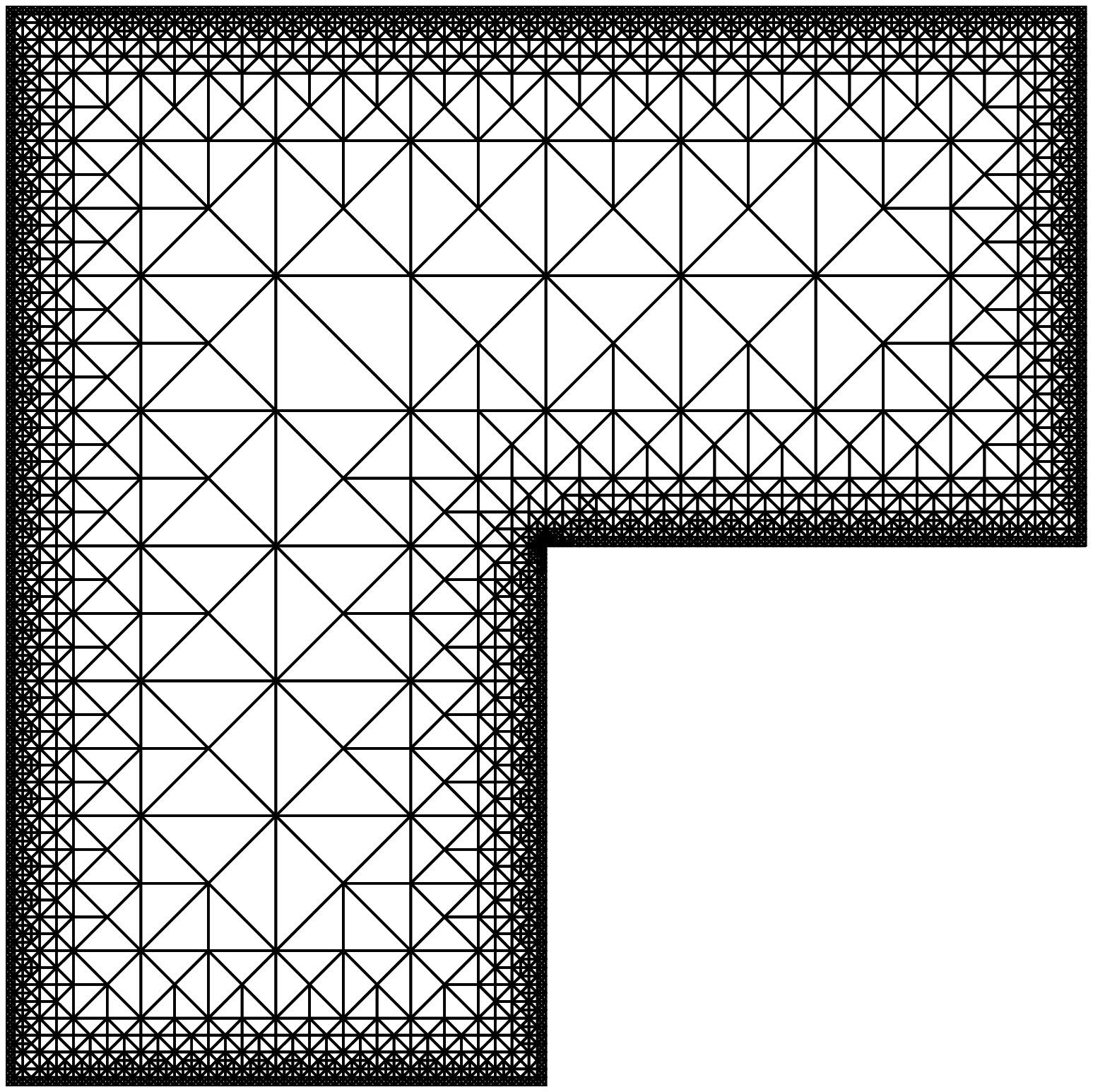}
  \includegraphics[width=0.49\textwidth]{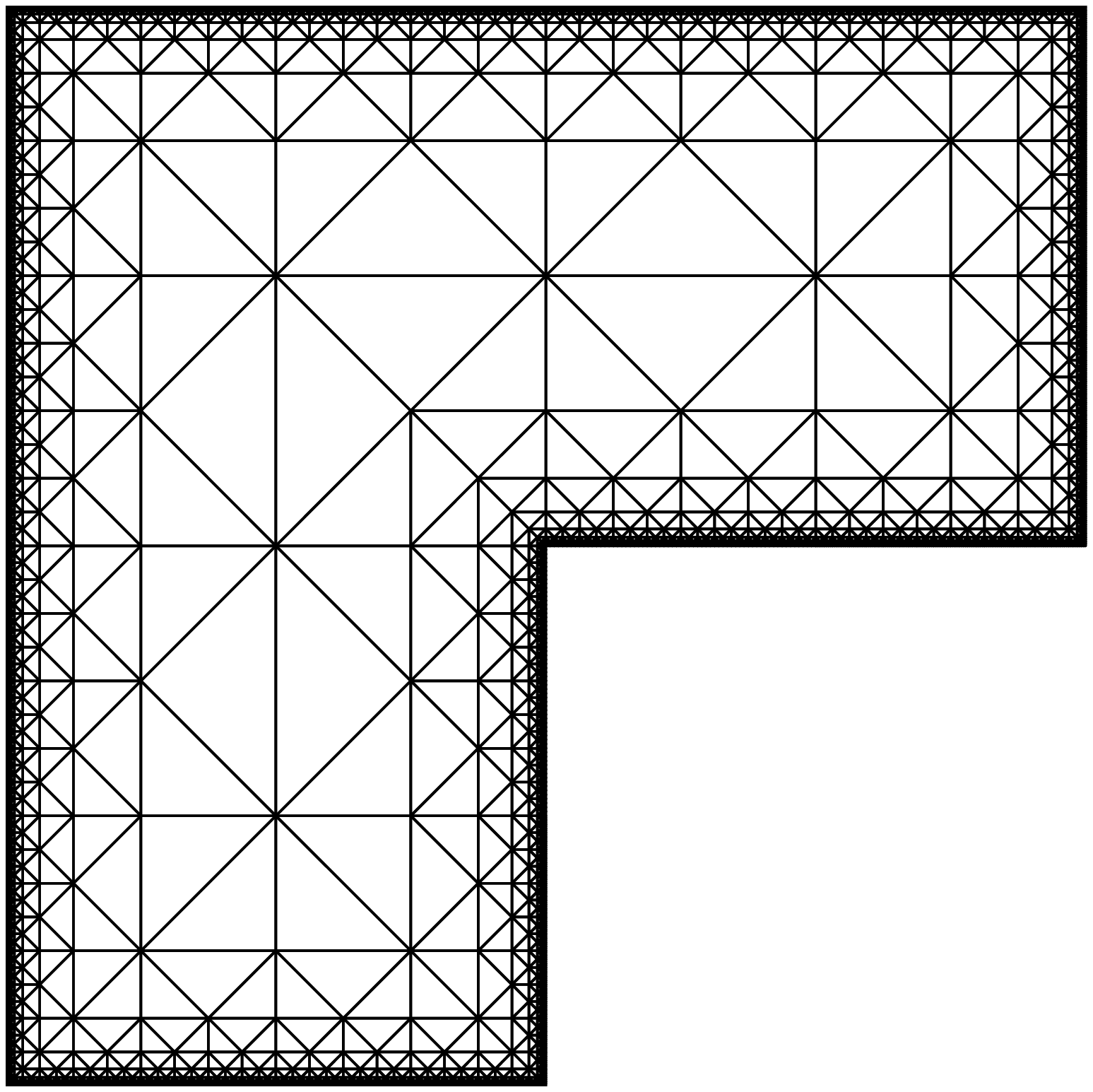}
  \includegraphics[width=0.49\textwidth]{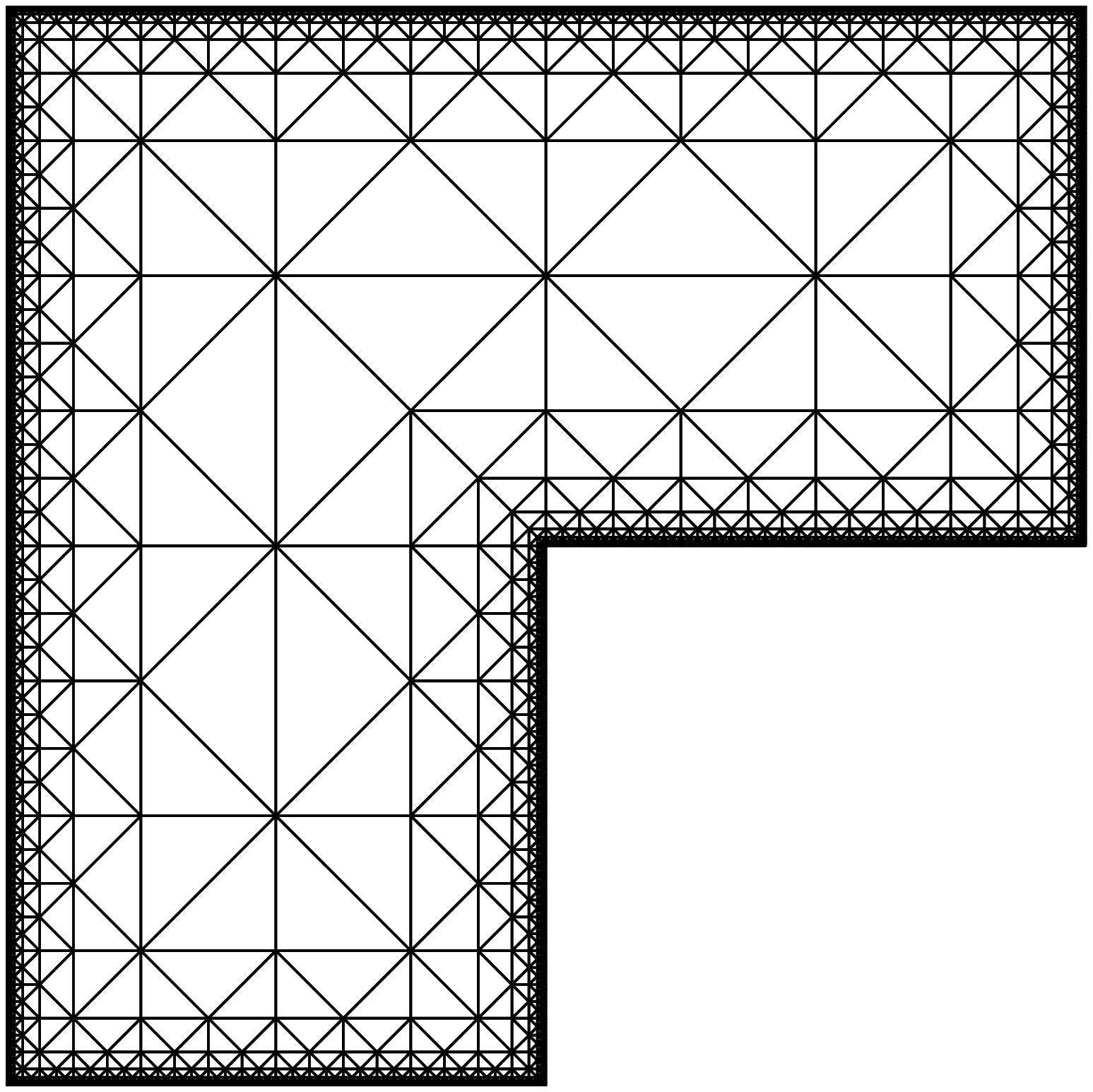}
  \caption{Adaptive meshes with approx. 10000 elements for singular solution from \S\ref{exp3}
    for $\eps=1,10^{-4},10^{-8},10^{-16}$.}
  \label{fig:sing:mesh}
\end{figure}


\bigskip
\noindent
{\bf Acknowledgment.} We thank Torsten Lin\ss\ for fruitful discussions on the subject of
singularly perturbed problems.

\bibliographystyle{siam}
\bibliography{bib}
\end{document}